\def\e{\epsilon}
\def\lf{\left}
\def\a{{\alpha}}
\def\bx{{\bar\xi}}
\def\p{\partial}
\newcommand\R{{\mathbb R}}
\def\be{\begin{equation}}
\def\ee{\end{equation}}
\def\bee{\begin{equation*}}
\def\eee{\end{equation*}}
\def\mbg{\mathbf{g}}
\def\bu{\mathbf{u}}
\def\lf{\left}
\def\ri{\right}
\def\a{{\alpha}}
\def\e{\epsilon}
\def\bx{{\bar\xi}}
\def\p{\partial}
\def\p{\partial}
\def\p{\partial}
\def\lf{\left}
\def\ri{\right}
\def\a{{\alpha}}
\def\e{\epsilon}
\def\bx{{\bar\xi}}
\def\p{\partial}
\def\p{\partial}
\def\p{\partial}
\def\ri{\right}
\def\la{\langle}
\def\ra{\rangle}
\def\sph{\mathbb{S}}
\def\vv<#1,#2>{\left\la#1,#2\right\ra}
\newtheorem{thm}{Theorem}[section]
\newtheorem{lem}{Lemma}[section]
\theoremstyle{definition}
\newtheorem{defn}{Definition}[section]
\theoremstyle{remark}
\newtheorem{rem}{Remark}
\numberwithin{equation}{section}
\def\tl{\tilde\lambda}
\def\tm{\tilde\mu}
\def\ts{\tilde\sigma}
\def\be{\begin{equation}}
\def\ee{\end{equation}}
\begin{document}
\title{Steklov eigenvalues on annulus}

\author{Xu-Qian Fan$^1$}
\address[Xu-Qian Fan]{Department of Mathematics, Jinan University, Guangzhou, 510632, China}
\email{txqfan@jnu.edu.cn}
\thanks{$^1$Research partially supported by the National Natural Science Foundation of China (10901072, 11101106)}
\author{Luen-Fai Tam$^2$}
\address[Luen-Fai Tam]{The Institute of Mathematical Sciences and Department of
 Mathematics, The Chinese University of Hong Kong,
Shatin, Hong Kong, China.} \email{lftam@math.cuhk.edu.hk}
\thanks{$^2$Research partially supported by Hong Kong  RGC General Research Fund
\#CUHK 403011}
\author{Chengjie Yu$^3$ }
\address[Chengjie Yu]{Department of Mathematics, Shantou University, Shantou, Guangdong, 515063, China}
\email{cjyu@stu.edu.cn}
\thanks{$^3$Research partially supported by GDNSF S2012010010038 and the National Natural Science Foundation of
China (11001161).}
\renewcommand{\subjclassname}{
  \textup{2010} Mathematics Subject Classification}
\subjclass[2010]{Primary 35P15 ; Secondary 53A10
}

\date{}
\keywords{Steklov eigenvalue, minimal surface}
\begin{abstract}
We obtain supremum of the $k$-th normalized Steklov eigenvalues of all rotational symmetric conformal metrics on $[0,T]\times \mathbb{S}^1$, $k>1$. The case $k=1$ for all conformal metrics on $[0,T]\times \mathbb{S}^1$ has been completely solved by Fraser and Schoen \cite{FraserSchoen-2011,FraserSchoen-2012,FraserSchoen-2013}. We  give geometric description in terms of minimal surfaces for metrics attaining the supremum. We also obtain some partial results on the comparison of the normalized Stekov eigenvalues of rotationally symmetric metrics and general conformal metrics on $[0,T]\times \mathbb{S}^1$. A counter example is constructed to show that for fixed $T$ the first normalized Steklov eigenvalue of  rotationally symmetric metric may not be larger.
\end{abstract}
\maketitle

\markboth{ }
{ }

\section{Introduction}
Let $(M,g)$ be a compact Riemannian manifold of dimension not less than $2$ with nonempty boundary $\partial M$ and $u$ be a smooth function on $\partial M$. We denote the harmonic extension of $u$ on $M$ as $\hat u $. Then, the Dirichlet-to-Neumann map $L_g$ sends $u$ to $\frac{\partial \hat u}{\partial n }$ where $n$ means the unit outward normal on $\partial M$. The eigenvalues of $L_g$ are called Steklov eigenvalues which were first introduced by Steklov \cite{Steklov} in 1902.  $L_g$ is a nonnegative self-adjoint first order elliptic pseudo-differential operator (see \cite{CEG2001}). The spectrum of $L_g$ is discrete and unbounded:
$$0=\sigma_0(g)<\sigma_1(g)\leq \sigma_2(g)\leq\cdots\leq\sigma_k(g)\leq \cdots.$$

The Dirichlet-to-Neumann map is important in Electrical Impedance Tomography which is closely related to an inverse problem raised by Calder\'on \cite{Calderon1980}. In the problem of Calder\'on, $u$ means the boundary voltage and $\frac{\partial \hat u}{\partial n}$ means the boundary current. The problem is to recover $g$ from the boundary measurements of the voltage and current. It was shown in \cite{LU2001,LTU2003} that this can be done for $\mbox{dim} M\geq 3$ and $g$ real analytic. When $M$ is a surface, It was also shown in \cite{LU2001,LTU2003} that we can only recover the conformal class of $g$.

There have been many works on estimating the Steklov eigenvalues, see \cite{CEG2001,Escobar2000,FraserSchoen-2011,FraserSchoen-2012,FraserSchoen-2013} and references therein. In this paper, we will only consider the Steklov eigenvalues on an annulus (Riemann surface with genus zero and two boundaries). When $M$ is a surface,
$$\tilde \sigma_k(g)=\sigma_k(g)L(\partial M)$$ is called the $k$-th
normalized Steklov eigenvalue where $L(\partial M)$ means the length of $\partial M$. In \cite{FraserSchoen-2011}, Fraser and Schoen computed the maximum the first normalized Steklov eigenvalue on the annulus among all rotationally symmetric metrics and found that the maximum is achieved by the critical catenoid which is a portion of the catenoid that meets the boundary of the ball orthogonally. In \cite{FraserSchoen-2013}, by using minimal surfaces as in \cite{LY1982}, Fraser and Schoen showed that the maximum of the first normalized Steklov eigenvalues for the annulus is achieved by the critical catenoid. For simply connected planar domain, it is a classical result by Weinstock \cite{W} which says that the maximum of the first normalized Steklov eigenvalue is achieved by the round disk in the Euclidean plane. This result was also extended to any Riemann surface with genus zero and one boundary in \cite{FraserSchoen-2011}.

In this paper, motivated by \cite{FraserSchoen-2011}, we first compute the supremum   of all the normalized Steklov eigenvalues among all rotationally symmetric metrics. Let $M_k$ be the supremum of the $k$-th normalized Steklov eigenvalue of   rotational conformal metrics $f^2(t)(dt^2+d\theta^2)$ on a cylinder $[0,T]\times \mathbb{S}^1$. We have the following:

\begin{thm}\label{thm-1-intro}
$$M_{2k-1}=\frac{4k\pi}{T_{2,0}(1)},
$$
and is achieved when and only when $f(1)=f(T)$ and $T=\frac2kT_{2,0}(1)$, where $T_{2,0}(1)$ is the unique positive root of $s=\coth s$. $M_2=4\pi$ and is achieved when  $f(1)=f(T)$ and $T=\infty$. For $k>1$
$$
M_{2k}=4k\pi\tanh\lf(\frac{kT_{k,1}(1)}{2}\ri)
$$
where $T_{k,1}(1)$ is the unique positive root   of $k\tanh(\frac{ks}{2})=\coth(\frac{s}{2})$. $M_{2k}$ is achieved when and only when $f(1)=f(T)$, $T=T_{k,1}(1)$.

\end{thm}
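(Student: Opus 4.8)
The plan is to reduce the computation of each normalized Steklov eigenvalue to a separation-of-variables problem on the cylinder and then optimize over the conformal factor $f$ and the length $T$. First I would use the fact that the Dirichlet-to-Neumann operator, and hence the Steklov spectrum on a surface, depends only on the conformal class of the metric in the interior, so a harmonic function $\hat u$ on $([0,T]\times\sph^1, f^2(dt^2+d\theta^2))$ is just an ordinary harmonic function for the flat cylinder metric. Expanding in Fourier modes $e^{\ii m\theta}$, $m\in\mathbb Z$, the harmonic extension in the $m$-th mode is a linear combination of $e^{mt}$ and $e^{-mt}$ (and of $1,t$ for $m=0$). The Steklov eigenvalue condition $\tfrac{\p\hat u}{\p n}=\sigma \hat u$ on the two boundary circles $\{t=0\}$ and $\{t=T\}$ becomes, after dividing by the conformal factor $f$ on the boundary (the outward normal derivative picks up a factor $f^{-1}$, but the length element also scales, and crucially the eigenvalue equation on each boundary component only sees $f(0)$ and $f(T)$), a $2\times 2$ linear system whose determinant must vanish. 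This yields, for each $m\ge 1$, a transcendental equation for $\sigma$ in terms of $m$, $T$, and the two boundary values $a=f(0)$, $b=f(T)$; the mode $m=0$ contributes the trivial eigenvalue $\sigma_0=0$ together with one more eigenvalue.

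The key structural point, which I would isolate as a lemma, is that the entire Steklov spectrum of a rotationally symmetric metric is determined by $(T,a,b)$ alone — the interior behavior of $f$ is irrelevant. So $\tilde\sigma_k(g)=\sigma_k\cdot L(\p M)=\sigma_k\cdot 2\pi(a+b)$ is a function of three real parameters. Moreover by symmetry one expects the supremum to be attained when $a=b$ (I would prove that replacing $(a,b)$ by $(\tfrac{a+b}{2},\tfrac{a+b}{2})$ does not decrease $\tilde\sigma_k$, or directly that the optimizing configuration is symmetric), and after this reduction one normalizes $a=b=1$, so $\tilde\sigma_k$ becomes a function of $T$ alone. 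In the symmetric case the $2\times 2$ system decouples into even and odd parts across the midpoint $t=T/2$: for each mode $m$ one gets the two branches $\sigma = m\tanh(mT/2)$ (even eigenfunctions $\cosh(m(t-T/2))$) and $\sigma=m\coth(mT/2)$ (odd eigenfunctions $\sinh(m(t-T/2))$), plus from $m=0$ the extra eigenvalue $\sigma=2/T$. Thus the list of all Steklov eigenvalues is, explicitly,
\[
\Big\{\,0,\ \tfrac2T,\ m\tanh\big(\tfrac{mT}{2}\big),\ m\coth\big(\tfrac{mT}{2}\big)\ :\ m\ge 1\,\Big\},
\]
and multiplying by $L(\p M)=4\pi$ gives the normalized values $\tilde\sigma = 4\pi\sigma$. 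It remains to figure out, for each $k$, which of these is the $k$-th smallest and then to maximize it over $T>0$.

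The combinatorial heart of the argument is to sort this explicit list. For fixed $T$ the branch $m\mapsto m\tanh(mT/2)$ is increasing and the branch $m\mapsto m\coth(mT/2)$ is increasing, with $m\tanh(mT/2)<m\coth(mT/2)<(m+1)\tanh((m+1)T/2)$ holding in appropriate ranges of $T$; one must track how $2/T$ interleaves with these, and how the two families interleave with each other as $T$ varies. I expect this bookkeeping — determining the index $k$ precisely, including the behavior as $T\to 0$ and $T\to\infty$ and the crossover thresholds — to be the main obstacle, since the ordering genuinely changes with $T$ and the supremum over $T$ of the $k$-th one need not be the supremum over $T$ of any single branch. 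Once the correct branch is identified on each $T$-interval, maximizing it is a one-variable calculus problem: for the odd branch $4\pi\cdot m\coth(mT/2)$ the supremum as a function of $T$ with the constraint that it be the $k$-th eigenvalue is attained at the $T$ where it collides with the next eigenvalue, leading to an equation of the form $s=\coth s$ (giving $T_{2,0}(1)$) for the odd-indexed cases $M_{2k-1}$; for the even branch $4\pi\cdot k\tanh(kT/2)$ one likewise maximizes under the constraint that it not exceed the competing eigenvalue, producing the equation $k\tanh(kT/2)=\coth(T/2)$ (giving $T_{k,1}(1)$) for $M_{2k}$, $k>1$; and the degenerate case $M_2=4\pi$ with $T=\infty$ comes from the branch $\tanh(T/2)\to 1$ being the second eigenvalue throughout, its supremum $1$ never attained at finite $T$. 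Finally I would note that the condition $f(1)=f(T)$ (the paper's normalization $a=b$) together with the stated value of $T$ is exactly the case in which all the inequalities used in the sorting become equalities, which is why the supremum is attained precisely there.
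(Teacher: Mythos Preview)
Your outline captures the overall architecture --- separation of variables, the explicit spectrum list, and optimizing over $T$ --- and correctly flags the sorting as the main obstacle. But there are two places where the paper does real work that your proposal glosses over, and in the first of them your suggested approach would actually fail.

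\medskip
\textbf{The reduction to $a=b$.} You propose to show that averaging $(a,b)\mapsto\bigl(\tfrac{a+b}{2},\tfrac{a+b}{2}\bigr)$ does not decrease $\tilde\sigma_k$, or to argue ``by symmetry''. This does not work eigenvalue-by-eigenvalue: writing $\beta=4ab/(a+b)^2$, the paper shows (Lemma~\ref{lem-eigen}(iv)) that each $\tilde\lambda_n$ is \emph{increasing} in $\beta$ while each $\tilde\mu_n$ is \emph{decreasing} in $\beta$. So passing to $\alpha=1$ (i.e.\ $\beta=1$) raises some branches and lowers others, and the $k$-th smallest has no evident monotonicity in $\beta$. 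The paper never reduces to $\alpha=1$ up front. Instead it carries general $\beta$ through the entire sorting argument (Lemmas~\ref{lem-sigma-odd}--\ref{lem-sigma-even}), bounding $\tilde\sigma_{2k-1}$ and $\tilde\sigma_{2k}$ by the crossing values $\tilde\lambda_{k-j}\bigl(\beta,T_{k-j,j}(\beta)\bigr)$ together with some limit terms $\tilde\lambda_m(\beta,\infty)$, and only \emph{then} proves (Lemma~\ref{lem-upperbound-odd-1}(i), via a contradiction argument using the explicit $\beta$-derivatives) that each such crossing value is strictly increasing in $\beta$. The optimality of $a=b$ emerges from this, not from a symmetry or averaging principle.

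\medskip
\textbf{Picking the right crossing.} Even at $\beta=1$, after the sorting one is left with several competing crossing values $\tilde\lambda_{k-j}\bigl(1,T_{k-j,j}(1)\bigr)$, $0\le j\le s$, and one must show the extremal index dominates. The paper's device is Lemma~\ref{lem-tanh-coth}: if $x(a,b)$ denotes the unique root of the crossing equation and $u(a,b)$ the common value there, then $u(a,b)<u(a+c,b-c)$ for $a>b\ge c>0$. This is proved by differentiating the implicit relation and appealing to the monotonicity of $t\mapsto t^{-2}\bigl(\sinh^2 t\,\sqrt{\coth^2 t-\beta}-t\bigr)$ (Lemma~\ref{lem-f}). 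It is this comparison that singles out $T_{k,0}(1)$ for the odd case and $T_{k,1}(1)$ for the even case from among all the candidate crossings. Your proposal calls this ``a one-variable calculus problem'' once the correct branch is identified, but identifying the correct branch \emph{is} the problem, and you give no mechanism for it.

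\medskip
A small slip: for $M_{2k-1}$ the optimizing $T$ is where $\tilde\lambda_k$ meets $\tilde\mu_0=8\pi/T$, not a $\coth$ branch; the equation $s=\coth s$ appears only after the rescaling $kT_{k,0}(1)=2T_{2,0}(1)$ (Lemma~\ref{lem-upperbound-odd-1}(iii)).
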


 We find that all the supremums are achieved by an embedded or immersed minimal surfaces meeting the boundary of the ball orthogonally except for the second normalized Steklov eigenvalues whose supremum can not be achieved, see section 3 for more details. Note that in \cite{GP2010}, Girouard and  Polterovich showed that the supremum of the second normalized Steklov eigenvalue for simply connected planar domain is $4\pi$ and cannot be achieved. Combining our computation and the result in \cite{GP2010}, it maybe natural to conjecture that the supremum of the second normalized Steklov eigenvalue can not be achieved on any surfaces.

In \cite{Escobar2000}, motivated by Cheng \cite{Cheng1975}, Escobar studied the comparison of first Steklov eigenvalue. In \cite{FraserSchoen-2011}, Fraser and Schoen also compared the first  normalized Steklov eigenvalue of a supper critical metric on the annulus with the first  normalized Steklov eigenvalue of the critical catenoid. Motivated by all these results, in the second part of this paper, we compare all the Steklov eigenvalues of a general metric and the rotationally symmetric metric on the annulus. It turns out that the comparison is true for a large class of metrics (See Theorem \ref{thm-eigen-comp-1}, Theorem \ref{thm-eigen-comp-2}), but is not true in general (See Theorem \ref{thm-example}).

\section{Upper estimates of normalized eigenvalues}

Let  $\Sigma=[0,T]\times \sph^1$ equipped  with the metric
\begin{equation}\label{metric-1}
g=f(t)^2(dt^2+d\theta^2).
\end{equation}
We want to compute the maximum of the nonzero normalized  eigenvalues $\tilde\sigma_k(g)=\sigma_k(g)L_g(\partial \Sigma)$ with $g$ in the form \eqref{metric-1} for $k> 0$ and for all $T>0$. Here $\sigma_k(g)$ is the $k$-th Stekolov eigenvalue.

Let $$\a=\frac{f(0)}{f(T)}, \text{\ and \ }\beta=\frac{4(f(0)f(T))^{-1}}{\lf((f(0) )^{-1}+(f(T))^{-1}\ri)^2}= \frac{4\a}{(1+\a)^2}.$$
It is well-know that $\tilde\sigma_k(g)$ depends only on $\a$ (or equivalently $\beta$) and $T$, see \cite{FraserSchoen-2013}. By symmetry we may assume that $\a\ge1$. Therefore we also denote, $\tilde\sigma_k(g)$ by $\tilde\sigma_k(\beta,T)$ if $g$ and $\beta$ are related as above. In this section, we want to compute

\begin{equation}\label{eq-max-1}
M_k=\sup_{T>0,\alpha\geq1}\tilde\sigma_k(\beta,T).
\end{equation}
For $k=1$, this has been obtained in \cite{FraserSchoen-2011}. In fact, sharp bound for $k=1$ for general conformal metrics on $\Sigma$ is also obtained in \cite{FraserSchoen-2013}.

In the remaining of this paper, we always assume that $\a\ge1$. Note that $\beta\le1$ and $\beta=1$ if and only if $\a=1$.

By \cite{FraserSchoen-2011}, all the nonzero normalized Steklov eigenvalues of $g$ are as follows:
\begin{equation}
\tilde\lambda_n(\beta,T)= \frac{4n\pi}{\beta}\left(\coth(nT)-\sqrt{\coth^2(nT)-\beta}\right),
\end{equation}
\begin{equation}
\tilde\mu_0(\beta,T)= \frac{8\pi}{T\beta}
\end{equation}
and
\begin{equation}
\tilde\mu_n(\beta,T)= \frac{4n\pi}{\beta}\left(\coth(nT)+\sqrt{\coth^2(nT)-\beta}\right),
\end{equation}
 for $n=1,2,\cdots$. So the question is to find out which of the $\tilde\lambda_m$ or $\tilde\mu_n$ gives $\tilde\sigma_k$ and to estimate its value.

\begin{lem}\label{lem-eigen}

\item[(i)] $\tilde\lambda_n<\tilde\lambda_{n+1}$,  $\tilde\mu_{n-1}<\tilde\mu_n$, and $\tilde\lambda_n<\tilde\mu_n$ for all $n\ge 1$. Moreover, each $\tilde\lambda_n$, and each $\tilde\mu_n$ has multiplicity two, for $n\ge1$.
\item[(ii)]\begin{equation*}
\frac{\partial\tilde\lambda_n}{\partial T}=\frac{n\tilde\lambda_n}{\sinh^2(nT)\sqrt{\coth^2(nT)-\beta}}
\end{equation*}
and
\begin{equation*}
\frac{\partial\tilde\mu_n}{\partial T}=-\frac{n\tilde\mu_n}{\sinh^2(nT)\sqrt{\coth^2(nT)-\beta}}.
\end{equation*}
for $n= 1,2\cdots$.
 $$
 \frac{\p \tm_0}{\p T}=-\frac{8\pi}{T^2\beta}.$$
 In particular, $\tilde\lambda_n(\beta,T)$ is increasing in $T$, and $\tilde\mu_n(\beta,T)$ is decreasing in $T$.
\item[(iii)]
\bee
 \tl_n(\beta,\infty):=\lim_{T\to\infty} \tl_n(\beta,T)=\frac{4n\pi}{1+\sqrt{1-\beta}}=
 \frac{2n\pi(1+\a)}{\a};
\eee

\bee
 \tm_n(\beta,\infty):=\lim_{T\to\infty}\tm_n(\beta,T)=\frac{4n\pi}{1-\sqrt{1-\beta}}=
2n\pi(1+\a).
\eee
Hence if $n\ge1$, then $\tm_n(\beta,\infty)=\a\tl_n(\beta,\infty)$
\bee
\tl_n(\beta,0):=\lim_{T\to0} \tl_n(\beta,T)=0;
\eee
\bee
\tm_n(\beta,0):=\lim_{T\to0} \tl_n(\beta,T)=\infty,
\eee
for $n=1,2,\cdots$.
 \item[(iv)]
\begin{equation*}
\begin{split}
\frac{\partial \tilde\lambda_n}{\partial\beta}=&
\frac{\tilde\lambda_n}{2\sqrt{\coth^2(nT)-\beta}\left(\coth(nT)+\sqrt{\coth^2(nT)-\beta}\right)}\\
=&\frac{2n\pi \tl_n} {\tm_n\beta\sqrt{\coth^2(nT)-\beta}}\\
=&\frac{2 \pi  \sinh^2(nT)  }{\tm_n \beta} \frac{\p \tl_n}{\p T},
\end{split}
\end{equation*}
and
\begin{equation*}
\begin{split}
\frac{\partial \tilde\mu_n}{\partial\beta}=&
-\frac{\tilde\mu_n}{2\sqrt{\coth^2(nT)-\beta}\left(\coth(nT)-\sqrt{\coth^2(nT)-\beta}\right)}\\
=&-\frac{2n\pi \tm_n} {\tl_n\beta\sqrt{\coth^2(nT)-\beta}}\\
=&\frac{2 \pi  \sinh^2(nT)  }{\tl_n^2\beta} \frac{\p \tm_n}{\p T}
\end{split}
\end{equation*}
for $n\ge1$.
$$
\frac{\p \mu_0}{\p\beta}=-\frac{8\pi}{T\beta^2}.
$$
In particular, $\tilde\lambda_n(\beta,T)$ is increasing in $\beta$, and $\tilde\mu_n(\beta,T)$ is decreasing in $\beta$.
\item[(v)] $\tilde\lambda_n(1,T)=4n\pi\tanh(\frac{nT}{2})$ and $\tilde\lambda_n(0,T):=\lim_{\beta\to0}\tl_n(\beta,T) =2n\pi\tanh(nT)$. $\tilde\mu_n(1,T)=4n\pi\coth(\frac{nT}{2})$, $\tilde\mu_n(0,T):=\lim_{\beta\to0}\tm_n(\beta,T) =\infty$.

\end{lem}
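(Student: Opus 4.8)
The plan is to treat the lemma as a string of elementary one‑variable calculations, organized around a rationalized form of the eigenvalues. First I would record the facts that drive everything. Since $\a\ge1$ we have $\b\in(0,1]$, and since $\coth(nT)>1$ for $T>0$ while $\b\le1$, the quantity $\coth^2(nT)-\b$ is strictly positive; hence all square roots below are real and positive and $\coth(nT)-\sqrt{\coth^2(nT)-\b}>0$. Multiplying numerator and denominator by the conjugate,
\begin{equation*}
\tl_n=\frac{4n\pi}{\coth(nT)+\sqrt{\coth^2(nT)-\b}},\qquad \tm_n=\frac{4n\pi}{\coth(nT)-\sqrt{\coth^2(nT)-\b}},
\end{equation*}
so $\tl_n\tm_n=\dfrac{16n^2\pi^2}{\b}$ and $\tl_n+\tm_n=\dfrac{8n\pi\coth(nT)}{\b}$; these two identities, together with $\coth^2 x-1=1/\sinh^2 x$ and $\dfrac{\cosh x\mp 1}{\sinh x}=\tanh\tfrac x2$ (resp. $\coth\tfrac x2$), are exactly what is needed to move between the several equivalent expressions appearing in (ii), (iv), (v). It is also convenient to write $\tl_n=\tfrac{4\pi}{T}F(nT)$ and $\tm_n=\tfrac{4\pi}{T}G(nT)$, where
\begin{equation*}
F(x)=\frac{x}{\coth x+\sqrt{\coth^2 x-\b}},\qquad G(x)=\frac{x}{\coth x-\sqrt{\coth^2 x-\b}}\qquad(x>0).
\end{equation*}

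For (i): $\tl_n<\tm_n$ is immediate from $\sqrt{\coth^2(nT)-\b}>0$. The monotonicity in $n$ reduces to showing $F,G$ are increasing on $(0,\infty)$: then $nT<(n+1)T$ gives $\tl_n<\tl_{n+1}$ and $\tm_{n-1}<\tm_n$ for $n\ge2$, while $\tm_0<\tm_1$ follows from $G(x)=\dfrac{x(\coth x+\sqrt{\coth^2 x-\b})}{\b}\to\dfrac2\b$ as $x\to0^+$ (since $x\coth x\to1$), so that $\tm_0=\tfrac{4\pi}{T}\cdot\tfrac2\b=\tfrac{4\pi}{T}\lim_{x\to0^+}G(x)<\tfrac{4\pi}{T}G(T)=\tm_1$. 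Setting $c=\coth x$, $D=\sqrt{c^2-\b}$, so $(\coth x)'=-(c^2-1)$ and $D'=-c(c^2-1)/D$, a short computation gives
\begin{equation*}
F'(x)=\frac{1+x(c^2-1)/D}{c+D},\qquad G'(x)=\frac{1-x(c^2-1)/D}{c-D}.
\end{equation*}
For $F'$ both numerator and denominator are manifestly positive. For $G'$ the denominator $c-D>0$; and since $\b\le1$ we have $D\ge\sqrt{c^2-1}=1/\sinh x$, hence $\dfrac{x(c^2-1)}{D}=\dfrac{x}{\sinh^2 x}\cdot\dfrac1D\le\dfrac{x}{\sinh^2 x}\cdot\sinh x=\dfrac{x}{\sinh x}<1$ because $\sinh x>x$ for $x>0$; so $G'(x)>0$. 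Finally, the ``multiplicity two'' assertion is not analytic: in the construction of \cite{FraserSchoen-2011} each value $\tl_n$ (resp. $\tm_n$), $n\ge1$, arises from the two boundary Fourier modes $\cos n\theta,\sin n\theta$, and I would just cite this.

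Parts (ii)--(v) are then routine. For (ii) I differentiate the rationalized forms of $\tl_n,\tm_n$ directly in $T$, using $\tfrac{\p}{\p T}\coth(nT)=-n/\sinh^2(nT)$; the result simplifies, via $\coth^2(nT)-1=1/\sinh^2(nT)$, to the displayed formulas, and the stated $T$-monotonicity is just their sign (the $\tm_0$ case is trivial). For (iii) and (v) I use $1-\b=\left(\frac{\a-1}{\a+1}\right)^2$, so $\sqrt{1-\b}=\tfrac{\a-1}{\a+1}$ (as $\a\ge1$) and $1\pm\sqrt{1-\b}$ equals $\tfrac{2\a}{\a+1}$ or $\tfrac{2}{\a+1}$; the $T\to\infty$ limits follow since $\coth(nT)\to1$, the $T\to0$ limits since $\coth(nT)\to\infty$ (resolving the $0/0$ form for $\tl_n$ and for $\tl_n(0,T)$ via the rationalized expression), and the $\b=1$ values from $\sqrt{\coth^2 x-1}=1/\sinh x$ together with $\tfrac{\cosh x-1}{\sinh x}=\tanh\tfrac x2$, $\tfrac{\cosh x+1}{\sinh x}=\coth\tfrac x2$. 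For (iv) I differentiate the rationalized forms in $\b$ with $\coth(nT)$ held fixed, then rewrite $\coth(nT)+\sqrt{\coth^2(nT)-\b}=\tfrac{\b\tm_n}{4n\pi}$ and $\coth(nT)-\sqrt{\coth^2(nT)-\b}=\tfrac{\b\tl_n}{4n\pi}$ and use the (ii) formulas to reach the alternative displayed forms; the sign gives the $\b$-monotonicity, and the $\tm_0$ computation is immediate.

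The only step with genuine content is the monotonicity of $G$ in (i); everything else is bookkeeping on the two identities $\tl_n\tm_n=16n^2\pi^2/\b$ and $\tl_n+\tm_n=8n\pi\coth(nT)/\b$. The mild subtlety is that the crude estimate $c^2-\b\ge c^2-1$ is just strong enough, but only thanks to $\sinh x>x$; an attempt to prove $G$ increasing by splitting $x(\coth x+\sqrt{\coth^2 x-\b})$ into separately monotone pieces fails, so it is cleanest to differentiate $G$ directly as above.
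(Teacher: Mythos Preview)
Your proof is correct and follows the same line as the paper: rationalize to get $\tl_n,\tm_n$ as $4n\pi/(\coth(nT)\pm\sqrt{\coth^2(nT)-\b})$, reduce (i) to the monotonicity of $h(x)=x\bigl(\coth x+\sqrt{\coth^2 x-\b}\bigr)$ with $h(0^+)=2$, and treat (ii)--(v) as direct differentiation and limit computations. The paper simply asserts that $h$ is increasing and cites \cite{FraserSchoen-2011}, whereas you actually supply the argument via the estimate $\sqrt{\coth^2 x-\b}\ge 1/\sinh x$ and $\sinh x>x$; this fills in a step the paper omits but does not change the approach.
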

\begin{proof}
(i) has been observed in \cite{FraserSchoen-2011}. In fact, except for the case $\tm_0<\tm_1$, other inequalities are obvious. Now
 $h(x)=x\left(\coth(x)+\sqrt{\coth^2(x)-\beta}\right)$
is increasing and
\bee
h(x)\to 2 \ \mbox{as}\ x\to 0^+.
\eee
From these, it is easy to see that $\tilde\mu_1>\tilde\mu_0$.

(ii)--(v) follow from direct computations,
\end{proof}

\begin{lem}\label{lem-intersection-1} Let $k\ge1$, $l\ge0$. For fix $\beta$, $\tl_k(\beta,T)=\tm_l(\beta,T)$ for some $T>0$ if and only if $\a<\frac kl$. Moreover, $T$ is unique if it exists. By convention if $l=0$, then $\frac kl=\infty$.
\end{lem}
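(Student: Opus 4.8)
The plan is to study, for fixed $\beta$ (equivalently fixed $\a \ge 1$), the auxiliary function
$$
F_{k,l}(T) := \tl_k(\beta,T) - \tm_l(\beta,T)
$$
on $(0,\infty)$, and to locate its zeros by a monotonicity-and-endpoints argument using the explicit formulas and limits from Lemma~\ref{lem-eigen}. The key structural facts I would invoke are: (a) by Lemma~\ref{lem-eigen}(ii), $\tl_k$ is strictly increasing in $T$ and $\tm_l$ is strictly decreasing in $T$ (for $l \ge 1$; the case $l=0$ is handled by the explicit $\tm_0 = 8\pi/(T\beta)$, also strictly decreasing), so $F_{k,l}$ is \emph{strictly increasing} in $T$; hence a zero exists iff the sign of $F_{k,l}$ changes between the two endpoints $T \to 0^+$ and $T \to \infty$, and such a zero is automatically unique. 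This immediately gives the uniqueness clause.

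Next I would compute the two endpoint limits. As $T \to 0^+$, Lemma~\ref{lem-eigen}(iii) gives $\tl_k(\beta,0) = 0$ and $\tm_l(\beta,0) = \infty$ (and for $l=0$, $\tm_0 = 8\pi/(T\beta) \to \infty$ as well), so $F_{k,l}(0^+) = -\infty < 0$. Therefore a zero exists on $(0,\infty)$ if and only if $F_{k,l}(T) > 0$ for $T$ large, i.e. if and only if
$$
\tl_k(\beta,\infty) > \tm_l(\beta,\infty).
$$
Using Lemma~\ref{lem-eigen}(iii), for $l \ge 1$ we have $\tl_k(\beta,\infty) = \frac{2k\pi(1+\a)}{\a}$ and $\tm_l(\beta,\infty) = 2l\pi(1+\a)$, so the inequality $\tl_k(\beta,\infty) > \tm_l(\beta,\infty)$ reduces, after cancelling the positive factor $2\pi(1+\a)$, to $\frac{k}{\a} > l$, i.e. $\a < \frac{k}{l}$. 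For $l = 0$, $\tm_0(\beta,\infty) = \lim_{T\to\infty} 8\pi/(T\beta) = 0 < \tl_k(\beta,\infty)$, so the condition $\a < \infty = k/0$ holds vacuously and a zero always exists; this matches the stated convention. I should also dispose of the boundary case $\a = k/l$ (when $l \ge 1$): there $F_{k,l}(\infty) = 0$ but $F_{k,l}$ is strictly increasing, so $F_{k,l}(T) < 0$ for all finite $T$ and no zero exists, confirming that the inequality $\a < k/l$ must be strict.

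The only subtlety requiring care is making sure the monotonicity of $F_{k,l}$ is genuinely strict and valid on the whole interval — in particular that the expressions $\frac{\p \tl_k}{\p T}$ and $\frac{\p \tm_l}{\p T}$ in Lemma~\ref{lem-eigen}(ii) are well-defined and strictly positive/negative for all $T > 0$. This hinges on $\coth^2(nT) - \beta > 0$, which holds since $\coth^2 \ge 1 \ge \beta$ with $\coth^2(nT) > 1$ strictly for $T > 0$; I would note this explicitly. With strict monotonicity in hand, the existence-and-uniqueness dichotomy is just the intermediate value theorem plus injectivity, so I do not anticipate a real obstacle here — the proof is essentially a careful bookkeeping of the limits already tabulated in Lemma~\ref{lem-eigen}. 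The mildly delicate point, worth a sentence, is the separate treatment of $l = 0$ (different formula for $\tm_0$, and the convention $k/0 = \infty$), which I would handle in parallel throughout rather than as an afterthought.
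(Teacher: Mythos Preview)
Your proposal is correct and follows essentially the same approach as the paper: define the difference $F_{k,l}(T)=\tl_k(\beta,T)-\tm_l(\beta,T)$, use Lemma~\ref{lem-eigen}(ii) to see it is strictly increasing, and use the endpoint limits from Lemma~\ref{lem-eigen}(iii) to reduce existence of a zero to $\tl_k(\beta,\infty)>\tm_l(\beta,\infty)$, i.e.\ $\a<k/l$. Your treatment is in fact slightly more careful than the paper's, since you explicitly address the borderline case $\a=k/l$ and the positivity of $\coth^2(nT)-\beta$ needed for strict monotonicity.
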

\begin{proof} First suppose   $l=0$, then by Lemma \ref{lem-eigen}, $\tl_k(\beta,T)-\tm_0(\beta,T)$ is increasing in $T$, $\tl_k(\beta,0)-\tm_0(\beta,0)= -\infty$,
$\tl_k(\beta,\infty)-\tm_0(\beta,\infty)= 2k\pi(1+\a)\a^{-1}$. Hence there is a unique $T>0$ such that $\tl_k(\beta,T)-\tm_0(\beta,T)=0$. This proves that case that $k\ge1$, $l=0$.

Suppose $l\ge1$. Again $\tl_k(\beta,T)-\tm_0(\beta,T)$ is increasing in $T$, $\tl_k(\beta,0)-\tm_l(\beta,0)= -\infty$. Hence there is a unique $T>0$ such that  $\tl_k(\beta,0)-\tm_l(\beta,0)= 0$ if any only if $\tl_k(\beta,\infty)-\tm_0(\beta,\infty)>0$. By Lemma \ref{lem-eigen} again:
$$
 \tl_k(\beta,\infty)-\tm_l(\beta,\infty)=2 \pi(1+\a)\lf( \frac k\a-l \ri)
 $$
 which is positive if and only if $\a<\frac kl$.
\end{proof}

\begin{defn}
  For $\a<\frac kl$ let $T_{k,l}(\beta)$ be the unique positive number such that
\begin{equation}
\tilde \lambda_k(\beta,  T_{k,l}(\beta))=\tilde \mu_l(\beta,  T_{k,l}(\beta)).
\end{equation}
\end{defn}
Since $\a\ge1$, we must have $k>l$. Note that $1\le \a<\frac kl$ if and only if $a_{kl}<\beta\le 1$ where $a_{kl}=\frac{4kl}{(k+l)^2}$.
\begin{lem}\label{lem-intersection-2}
$T_{k,l}(\beta)$ is decreasing in $k$ and increasing in $l$ whenever defined.
\end{lem}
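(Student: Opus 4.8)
The plan is to use the fact that $T_{k,l}(\beta)$ is characterized as the unique zero of a function that is monotone in $T$, and then to track how that zero moves when the index $k$ or $l$ is varied. Concretely, I would set
$$\Phi_{k,l}(T)=\tl_k(\beta,T)-\tm_l(\beta,T),\qquad T>0.$$
By Lemma \ref{lem-eigen}(ii) (together with the fact that $\tm_0$ is decreasing in $T$), the function $\Phi_{k,l}$ is strictly increasing on $(0,\infty)$, and by Lemma \ref{lem-intersection-1} it has exactly one zero, namely $T=T_{k,l}(\beta)$, precisely when $\alpha<k/l$. I would also use that, for fixed $\beta$ and $T$, the number $\tl_n(\beta,T)$ is strictly increasing in $n$ and $\tm_n(\beta,T)$ is strictly increasing in $n$ for $n\ge 0$; both statements are contained in Lemma \ref{lem-eigen}(i), the step $n=0\to 1$ for $\tm$ being the one proved there via the auxiliary function $h$.

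For the monotonicity in $k$, I would take $1\le k_1<k_2$ and assume $T_{k_1,l}(\beta)$ is defined, i.e. $\alpha<k_1/l$; then $\alpha<k_1/l\le k_2/l$, so $T_{k_2,l}(\beta)$ is also defined. Evaluating $\Phi_{k_2,l}$ at $T_{k_1,l}(\beta)$ and using that $\tl_{k_2}>\tl_{k_1}$ there while the term $\tm_l$ is unchanged, one gets $\Phi_{k_2,l}(T_{k_1,l}(\beta))>\Phi_{k_1,l}(T_{k_1,l}(\beta))=0$. Since $\Phi_{k_2,l}$ is strictly increasing in $T$ and vanishes at $T_{k_2,l}(\beta)$, this forces $T_{k_2,l}(\beta)<T_{k_1,l}(\beta)$.

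The monotonicity in $l$ is the mirror argument: take $0\le l_1<l_2$ and assume $T_{k,l_2}(\beta)$ is defined, i.e. $\alpha<k/l_2$; then $\alpha<k/l_2<k/l_1$ (the last inequality vacuous when $l_1=0$), so $T_{k,l_1}(\beta)$ is defined. Evaluating $\Phi_{k,l_2}$ at $T_{k,l_1}(\beta)$ and using $\tm_{l_2}>\tm_{l_1}$ there while $\tl_k$ is unchanged gives $\Phi_{k,l_2}(T_{k,l_1}(\beta))<\Phi_{k,l_1}(T_{k,l_1}(\beta))=0$, so the unique (increasing) zero $T_{k,l_2}(\beta)$ must lie to the right of $T_{k,l_1}(\beta)$. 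I do not expect a genuine obstacle in this argument: the only points that need care are that the two quantities being compared are both defined — which is automatic, since the hypothesis always carries the sharper of the two constraints of the form $\alpha<k/l$ — and that the sign of the perturbation of $\Phi$, hence the direction in which its zero shifts, is tracked correctly. No estimate beyond Lemmas \ref{lem-eigen} and \ref{lem-intersection-1} should be required.
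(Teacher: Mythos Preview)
Your argument is correct and is precisely the reasoning the paper has in mind: the paper's own proof is the one-line remark that the lemma follows from Lemma~\ref{lem-eigen}(i), and your proposal simply unpacks that remark by tracking how the unique zero of the strictly increasing function $\Phi_{k,l}(T)=\tl_k(\beta,T)-\tm_l(\beta,T)$ shifts when $\tl_k$ or $\tm_l$ is increased in its index.
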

\begin{proof} The lemma follows from Lemma \ref{lem-eigen}(i).
\end{proof}

Let $k\ge1$ and $\a\ge1$. There is a unique integer $s\ge 0$ be the largest integer such that
$$
\frac{k-s}{s}>\a.
$$
If $s=0$, then $(k-s)/s$ is $\infty$ be convention. In this case, $\a\ge k-1$. Then the following is true,
$$
\frac{k-j}{j-1}>\frac{k-j}{j}\ge\frac{k-s}{s}>\a
$$
for $1\le j\le s$. Again $j=1$ means that $(k-j)/(j-1)=\infty$. Hence for $1\le j<s$, $T_{k-j,j-1}(\beta)<T_{k-j-1,j}(\beta)$  are defined. In the following, denote $T_{m,n}(\beta)$ simply by $T_{m,n}$.  Then for $s\ge1$,
$$
(0,\infty)=(0,T_{k-1,0}]\bigcup\lf(\cup_{j=1}^{s-1}[T_{k-j,j-1},T_{k-j-1,j})\ri)
\bigcup[T_{k-s,s-1}, \infty)
$$
If $s=0$, we simply have:

$$
(0,\infty)=(0,T_{k-1,0}]\cup[T_{k-1,0},\infty)
$$

\begin{lem}\label{lem-sigma-odd} With the above notations and assumptions for $k\ge 1$:

 \begin{itemize}
   \item [(i)] if $s=1$, then
\bee
\ts_{2k-1}(\beta,T)\le\left\{
                        \begin{array}{ll}
                          \tl_k(\beta,T_{k,0}), & \hbox{for $T\in(0, T_{k-1,0})$;} \\
                          \tl_{k-j}(\beta,T_{k-j,j}), & \hbox{for $T\in[T_{k-j,j-1},T_{k-j-1,j})$,}
                          \\
                          &\hbox{\ \ and $1\le j\le s-1$;} \\
                          \max\{\tl_{k-s}(\beta,T_{k-s,s}),\tl_{k-s-1}(\beta,\infty)\} & \hbox{for $T\in [T_{k-s,s-1},\infty)$;}
                        \end{array}
                      \right.
\eee
   \item [(ii)] if $s=0$, then
   \bee
\ts_{2k-1}(\beta,T)\le\left\{
                        \begin{array}{ll}
                          \tl_k(\beta,T_{k,0}), & \hbox{for $T\in(0, T_{k-1,0})$;} \\

                          \tl_{k-1}(\beta,\infty) & \hbox{for $T\in [T_{k-1,0},\infty)$;}
                        \end{array}
                      \right.
\eee
 \end{itemize}

\end{lem}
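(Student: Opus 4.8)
The plan is to identify $\ts_{2k-1}(\beta,T)$ explicitly on each subinterval of the decomposition of $(0,\infty)$ and then bound it. Recall that the full (multiplicity-counted) list of nonzero normalized eigenvalues consists of the values $\tl_n(\beta,T)$ and $\tm_n(\beta,T)$ for $n\ge1$, each of multiplicity two, together with $\tm_0(\beta,T)$ of multiplicity one. By Lemma~\ref{lem-eigen}(i) we have the strict chains $\tl_1<\tl_2<\cdots$, $\tm_0<\tm_1<\tm_2<\cdots$, and $\tl_n<\tm_n$. So counting with multiplicity, the $(2k-1)$-th eigenvalue $\ts_{2k-1}$ is the $k$-th smallest value among the multiset $\{\tl_1,\tl_2,\dots\}\cup\{\tm_0,\tm_1,\dots\}$ where each $\tl_n$ and each $\tm_n$, $n\ge1$, is counted once (since $\ts_{2k-1}$ and $\ts_{2k}$ are the two copies). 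Thus $\ts_{2k-1}=\tl_k$ exactly when $\tl_k$ is $\le$ at least $k$ of the $\tm$-values counted, i.e.\ when $\tl_k\le \tm_{k-1}$ is false only above the crossing point; more precisely, by Lemma~\ref{lem-intersection-1}, $\tl_k$ and $\tm_l$ cross at $T_{k,l}$ when $\a<k/l$.

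The key step is a bookkeeping argument: I would show that on the interval $[T_{k-j,j-1},T_{k-j-1,j})$ (for $1\le j\le s-1$) the $(2k-1)$-th eigenvalue equals $\tl_{k-j}(\beta,T)$, on $(0,T_{k-1,0})$ it equals $\tl_k(\beta,T)$, and on $[T_{k-s,s-1},\infty)$ it equals either $\tl_{k-s}(\beta,T)$ or $\tl_{k-s-1}(\beta,T)$. The reasoning: at $T$ in such an interval, by Lemma~\ref{lem-intersection-2} and the defining relations, we have $\tl_{k-j}(\beta,T)$ sandwiched so that exactly $j$ of the $\tm$-values ($\tm_0,\dots,\tm_{j-1}$) lie below it while $\tm_j$ lies above, and the $\tl$-values below it are $\tl_1,\dots,\tl_{k-j-1}$; hence counting $\tl_1,\dots,\tl_{k-j}$ and $\tm_0,\dots,\tm_{j-1}$ gives exactly $(k-j)+j=k$ values that are $\le\tl_{k-j}$, so $\tl_{k-j}$ occupies position $2(k-j)-1$ or $2(k-j)$ — wait, I must be careful: the multiplicity-two bookkeeping forces $\ts_{2k-1}=\ts_{2k-1}$; the precise count is that the number of eigenvalues (with multiplicity) that are $\le\tl_{k-j}$ equals $2(k-j)+j = 2k-j$, and for this to include position $2k-1$ we need $j\le 1$... so actually the correct labeling is that $\ts_{2k-1}$ need not be a $\tl$ in the naive way, which is why the statement only asserts an \emph{upper bound}, not equality. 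I would therefore argue monotonically: for $T$ in $[T_{k-j,j-1}, T_{k-j-1,j})$, the eigenvalue $\ts_{2k-1}(\beta,T)$ is $\le \tl_{k-j}(\beta,T_{k-j-1,j}) = \tm_j(\beta,T_{k-j-1,j})$, but since the interval's right endpoint is $T_{k-j-1,j}$ we instead get the cleaner bound $\ts_{2k-1}\le\tl_{k-j}(\beta,T_{k-j,j})$ by using that $\tl_{k-j}$ is increasing in $T$ (Lemma~\ref{lem-eigen}(ii)) and re-expressing $\tl_{k-j}(\beta, T_{k-j,j})=\tm_j(\beta,T_{k-j,j})$ at the \emph{left-shifted} crossing; I would reconcile the indices by the monotonicity $T_{k-j,j-1}\le T_{k-j,j}$ from Lemma~\ref{lem-intersection-2}.

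The main obstacle — and where I would spend the most care — is the index arithmetic showing that the relevant $\tl$ and $\tm$ eigenvalues interlace correctly so that the $(2k-1)$-th one is controlled by $\tl_{k-j}(\beta,T_{k-j,j})$ on the $j$-th subinterval, and in particular verifying the endpoint cases: on $(0,T_{k-1,0})$ that $\tl_k$ is still the governing value (using $\tm_0\to\infty$ as $T\to0$ by Lemma~\ref{lem-eigen}(iii), so for small $T$ all $\tm_n$ are huge and $\ts_{2k-1}=\tl_k$), and on $[T_{k-s,s-1},\infty)$ that the bound becomes $\max\{\tl_{k-s}(\beta,T_{k-s,s}),\tl_{k-s-1}(\beta,\infty)\}$ because past $T_{k-s,s}$ the eigenvalue $\tm_{s-1}$ may have dropped below $\tl_{k-s}$ so the role is taken over by $\tl_{k-s-1}$, whose supremum over $T$ is its limit $\tl_{k-s-1}(\beta,\infty)=2(k-s-1)\pi(1+\a)/\a$ by Lemma~\ref{lem-eigen}(iii); one must check no crossing $T_{k-s-1,s}$ exists in $(0,\infty)$ since $\a\ge (k-s)/s$ by maximality of $s$, which is exactly why $\tl_{k-s-1}$ never catches $\tm_s$ and its sup is the $T\to\infty$ limit. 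Once the interlacing on each piece is nailed down, the stated inequalities follow by taking the supremum of each increasing $\tl$-branch over its subinterval, using Lemma~\ref{lem-eigen}(ii) throughout; the $s=0$ case is the degenerate version with only the two pieces $(0,T_{k-1,0}]$ and $[T_{k-1,0},\infty)$ and is handled identically.
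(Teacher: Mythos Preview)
Your strategy matches the paper's --- decompose $(0,\infty)$ into the intervals determined by the crossing times $T_{m,n}$, identify $\ts_{2k-1}$ on each piece, and bound it using the monotonicity from Lemma~\ref{lem-eigen}(ii) --- but your bookkeeping contains a genuine error that derails the argument.

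The mistake is in your multiplicity count. You write that the number of eigenvalues (with multiplicity) at most $\tl_{k-j}$ equals $2(k-j)+j=2k-j$, but you have forgotten that each $\tm_q$ for $q\ge1$ also has multiplicity two. The correct count on $[T_{k-j,j-1},T_{k-j-1,j})$ is: the eigenvalues strictly below both $\tl_{k-j}$ and $\tm_j$ are exactly $\tl_1,\dots,\tl_{k-j-1}$ (each twice), $\tm_1,\dots,\tm_{j-1}$ (each twice), and $\tm_0$ (once), giving $2(k-j-1)+2(j-1)+1=2k-3$ eigenvalues. Hence $\ts_{2k-2}=\ts_{2k-1}=\min\{\tl_{k-j}(\beta,T),\tm_j(\beta,T)\}$ exactly. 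This is not merely an upper bound --- it is an identification, and once you have it the desired inequality is immediate: since $\tl_{k-j}$ is increasing and $\tm_j$ is decreasing in $T$, and they cross at $T_{k-j,j}\in[T_{k-j,j-1},T_{k-j-1,j})$ (by Lemma~\ref{lem-intersection-2}), the maximum of their minimum over the interval is $\tl_{k-j}(\beta,T_{k-j,j})=\tm_j(\beta,T_{k-j,j})$. Your attempt to recover via ``$\tl_{k-j}(\beta,T_{k-j-1,j})=\tm_j(\beta,T_{k-j-1,j})$'' is false: at $T_{k-j-1,j}$ it is $\tl_{k-j-1}$, not $\tl_{k-j}$, that equals $\tm_j$.

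Your treatment of the tail interval $[T_{k-s,s-1},\infty)$ is also muddled. Maximality of $s$ gives $(k-s-1)/(s+1)\le\a$, not $\a\ge(k-s)/s$; what matters for the tail is whether $(k-s-1)/s>\a$, which determines whether $T_{k-s-1,s}$ exists. If it does, then for $T\in[T_{k-s,s},T_{k-s-1,s})$ one has $\ts_{2k-1}=\tm_s(\beta,T)\le\tm_s(\beta,T_{k-s,s})=\tl_{k-s}(\beta,T_{k-s,s})$, while for $T\ge T_{k-s-1,s}$ one has $\ts_{2k-1}=\min\{\tl_{k-s-1},\tm_{s+1}\}=\tl_{k-s-1}(\beta,T)\le\tl_{k-s-1}(\beta,\infty)$, since $\tm_{s+1}>\tl_{k-s-1}$ for all $T$ (because $(k-s-1)/(s+1)\le\a$). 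If $(k-s-1)/s\le\a$, then $\ts_{2k-1}=\tm_s$ on all of $[T_{k-s,s},\infty)$ and the bound $\tl_{k-s}(\beta,T_{k-s,s})$ alone suffices. Fix the multiplicity count and this two-case analysis, and the proof goes through exactly as the paper's does.
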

\begin{proof} (i) If $s\ge1$, $1\le j\le s-1$ and  if $T\in[T_{k-j,j-1},T_{k-j-1,j})$, then the first $2k-3$ normalized eigenvalues are given by $\tl_{p}(\beta,T)$, $1\le p\le k-j-1$ each with multiplicity two,
 $\tm_{q}(\beta,T)$, $1\le q\le j-1$, each with multiplicity two, and $\tm_0(\beta,T)$. So $\ts_{2k-1}(\beta,T)$ is equal to $\tl_{k-j}(\beta,T)$ or $\tm_{j}(\beta,T)$. The first case occurs only when $T\le T_{k-j,j}$ and the second case occurs only when $T>T_{k-j,j}$. Hence by Lemma \ref{lem-eigen}, we have $\ts_{2k-1}(\beta,T)\le \tl_{k-j}(\beta,T_{k-j,j})=\tm_{j}(\beta,T_{k-j,j})$.

  One can use similar method to prove that:
  $\ts_{2k-1}(\beta,T)\le \tl_{k}(\beta,T_{k,0})$  if $T\in(0,T_{k-1,0})$, $\ts_{2k-1}(\beta,T)\le\tl_{k-s}(\beta,T_{k-s,s})$ if   $T\in[T_{k-s,s-1},T_{k-s,s})$.
  Suppose $(k-s-1)/s>\a$, then for $T\in[T_{k-s,s},T_{k-s-1,s})$, $\ts_{2k-1}=\tm_s(\beta,T )$ which is less than $\tm_s(\beta,T_{k-s,s} )=\tl_{k-s}(\beta,T_{k-s,s})$.
  $\ts_{2k-1}(\beta,T)\le \tl_{k-s-1}(\beta,\infty)$ if  $T\in [T_{k-s-1,s},\infty)$ using the fact that $\tm_{s+1}(\beta,T)>\tl_{k-s-1}$ for all $T$. Suppose   $(k-s-1)/s\le \a$, then $\ts_{2k-1}=\tm_s(\beta,T )$ which is less than $\tm_s(\beta,T_{k-s,s} )=\tl_{k-s}(\beta,T_{k-s,s})$ for $T\in[T_{k-s,s},\infty)$.

  (ii) can be proved similarly.

  This completes the proof of the lemma.

\end{proof}
As before, for  $k\ge1$ and $\a\ge1$, there is a unique integer $s\ge 0$ be the largest integer such that
$$
\frac{k-s}{s}>\a.
$$
\begin{lem}\label{lem-sigma-even} With the above notations and assumptions for $k\ge 1$:

 \begin{itemize}
   \item [(i)] if $\ts_{2}(\beta,T)=\tl_1(\beta,T)$.

   \item[(ii)] if $k\ge 2$ and $s\ge1$
\bee
\ts_{2k }(\beta,T)\le\left\{
                        \begin{array}{ll}
                          \tl_k(\beta,T_{k,0}), & \hbox{for $T\in(0, T_{k,0})$;} \\
                          \tl_{k-j}(\beta,T_{k-j,j+1}), & \hbox{for $T\in[T_{k-j,j},T_{k-j-1,j+1})$,}
                          \\
                          &\hbox{\ \ and $0\le j\le s-1$;} \\
                           \tl_{k-s}(\beta,T_{k-s,s+1}) & \hbox{for $T\in [T_{k-s,s},\infty)$, and if $(k-s)/(s+1)>\a$;}\\
                            \tl_{k-s}(\beta,\infty) & \hbox{for $T\in [T_{k-s,s},\infty)$, and if $(k-s)/(s+1)\le \a$;}
                        \end{array}
                      \right.
\eee
   \item [(iii)] if $s=0$, then
   \bee
\ts_{2k}(\beta,T)\le\left\{
                        \begin{array}{ll}
                          \tl_k(\beta,T_{k,0}), & \hbox{for $T\in(0, T_{k,0})$;} \\
\tl_{k}(\beta,T_{k,1}) & \hbox{for $T\in [T_{k-1,0},\infty)$ if $\a<k$;}
\\
\tl_{k}(\beta,\infty) & \hbox{for $T\in [T_{k-1,0},\infty)$ if $\a\ge k$;}
                        \end{array}
                      \right.
\eee
 \end{itemize}

\end{lem}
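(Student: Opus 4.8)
The plan is to mimic the bookkeeping argument already used for Lemma \ref{lem-sigma-odd}, carefully tracking which of the ordered eigenvalues $\tl_p$ and $\tm_q$ occupies the $2k$-th slot on each subinterval of $(0,\infty)$ determined by the crossing times $T_{m,n}$. The underlying structure is the same: by Lemma \ref{lem-eigen}(i) the $\tl_p$'s are increasing in $p$, the $\tm_q$'s are increasing in $q$, each has multiplicity two for $p,q\ge 1$, $\tm_0$ is simple, and $\tl_p<\tm_p$ always. The only difference from the odd case is parity: since $2k$ is even, the $2k$-th eigenvalue is the second copy of whichever $\tl_p$ or $\tm_q$ is ``current,'' which shifts the relevant crossing by one index compared to the odd case. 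Part (i) is immediate: when $k=1$, below $\tl_1$ there is only the simple $\tm_0$ and $\sigma_0=0$, so $\ts_2=\tl_1$ (its second copy), with no competing $\tm$ yet.

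For (ii), fix $\beta$ (equivalently $\a\ge 1$) and let $s\ge 0$ be as defined. On the interval $T\in[T_{k-j,j},T_{k-j-1,j+1})$ with $0\le j\le s-1$, I would argue as in Lemma \ref{lem-sigma-odd}: the eigenvalues strictly below position $2k$ consist of $\tl_p(\beta,T)$ for $1\le p\le k-j-1$ (each doubled), $\tm_q(\beta,T)$ for $1\le q\le j$ (each doubled), plus the simple $\tm_0$ — that is $2(k-j-1)+2j+1=2k-1$ values — so $\ts_{2k}$ is the larger of $\tl_{k-j}(\beta,T)$ and $\tm_{j+1}(\beta,T)$. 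These two cross exactly at $T_{k-j,j+1}$ (well-defined since $(k-j)/(j+1)>\a$ on this range, which follows from the choice of $s$ and Lemma \ref{lem-intersection-1}): below it $\tl_{k-j}$ is smaller, above it $\tm_{j+1}$ is. Hence by monotonicity (Lemma \ref{lem-eigen}(ii)) $\ts_{2k}(\beta,T)\le\tl_{k-j}(\beta,T_{k-j,j+1})=\tm_{j+1}(\beta,T_{k-j,j+1})$ on that subinterval. The boundary subintervals $(0,T_{k,0})$ and $[T_{k-s,s},\infty)$ are handled the same way, splitting the last case according to whether $(k-s)/(s+1)>\a$ (so $T_{k-s,s+1}$ exists and gives the bound) or $(k-s)/(s+1)\le\a$ (so no such crossing occurs, $\tm_{s+1}>\tl_{k-s}$ for all $T$, and the bound is the limit $\tl_{k-s}(\beta,\infty)$ from Lemma \ref{lem-eigen}(iii)). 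Part (iii) is the $s=0$ specialization of the same dichotomy, using $T_{k-1,0}$ as the single breakpoint and comparing $\a$ with $k$.

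The main obstacle — and the step deserving the most care — is verifying the \emph{exact} count of eigenvalues below position $2k$ on each subinterval, i.e.\ confirming the transition indices at the endpoints $T_{m,n}$. One must check that as $T$ increases past $T_{k-j-1,j+1}$, the pair $\tm_{j+1}$ drops below the pair $\tl_{k-j-1}$ (so the ``current'' $\tl$-index decreases by one and the ``current'' $\tm$-index increases by one simultaneously), which is precisely what the nesting $T_{k-j,j}<T_{k-j-1,j+1}$ from Lemma \ref{lem-intersection-2} guarantees, together with the fact that no $\tl_p$ ($p\le k-j-1$) or $\tm_q$ ($q\le j$) can cross into or out of the first $2k-1$ slots on the open subinterval. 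Since $\tl$'s only increase and $\tm$'s only decrease in $T$, and $\tl_p<\tm_p$, the only crossings that change the count are exactly the $\tl_{k-j}$–$\tm_{j+1}$ type already accounted for; this rules out any hidden reshuffling. Once this combinatorial indexing is pinned down, the estimates follow directly from the monotonicity statements in Lemma \ref{lem-eigen}, exactly as in the proof of Lemma \ref{lem-sigma-odd}.
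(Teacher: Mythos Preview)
Your approach is essentially the paper's: partition $(0,\infty)$ by the crossing times $T_{m,n}$, count which $\tl_p$'s and $\tm_q$'s fill the first $2k-1$ slots on each subinterval, identify $\ts_{2k}$ as one of two remaining candidates, and bound by the crossing value via monotonicity. The counting $2(k-j-1)+2j+1=2k-1$ and the endpoint analysis are exactly right.

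There is, however, one genuine slip that as written would break the argument. You assert that ``$\ts_{2k}$ is the \emph{larger} of $\tl_{k-j}(\beta,T)$ and $\tm_{j+1}(\beta,T)$.'' It is the \emph{smaller}: after the first $2k-1$ eigenvalues, the $(2k)$-th smallest is $\min\{\tl_{k-j},\tm_{j+1}\}$. Your subsequent bound $\ts_{2k}\le\tl_{k-j}(\beta,T_{k-j,j+1})$ is correct precisely because of this: on $[T_{k-j,j},T_{k-j,j+1})$ the minimum is $\tl_{k-j}$ (increasing in $T$), on $[T_{k-j,j+1},T_{k-j-1,j+1})$ it is $\tm_{j+1}$ (decreasing), so the supremum over the whole subinterval is attained at the crossing. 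Were $\ts_{2k}$ the maximum, the monotonicity would run the wrong way and the bound would not follow. A smaller point: your explanation of~(i) tacitly assumes $\tm_0<\tl_1$, which fails for small $T$; the clean argument is that in either case ($\tm_0<\tl_1$ or $\tm_0>\tl_1$) one has $\ts_2=\tl_1$, since $\tm_0$ is simple and $\tl_1<\tm_1$. With these two corrections your proof coincides with the paper's.
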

\begin{proof}
(i) is obvious.

(ii) Suppose $0\le j\le s-1$ so that $T_{k-j,j}<T_{k-j-1,j+1}$ are defined. Suppose $ T\in[T_{k-j,j},T_{k-j-1,j+1})$. Then the first $2k-1$ normalized eigenvalues are given by
$\tl_p(\beta,T)$ for $0\le p\le k-j-1$  each with multiplicity two, $\tl_q(\beta,T)$ for $1\le q\le j-1$ each with multiplicity two and $\tm_0(\beta,T)$. Hence $\ts_{2k}(\beta,T)$ is equal to $\tl_{k-j}(\beta,T)$ is $T\in [T_{k-j,j},T_{k-j,j+1})$ and is equal to $\tm_{j+1}(\beta,T)$ if
$T\in [T_{k-j,j+1},T_{k-j-1,j+1})$. In any case, $ \ts_{2k}(\beta,T)\le \tl_{k-j}(\beta,T_{k-j ,j+1})$ by Lemma \ref{lem-eigen}.

Let $T\in[T_{k-s,s},\infty)$. Suppose $(k-s)/(s+1)>\a$. Then similarly, we can prove that
$\ts_{2k}$ is equal to $\tl_{k-s}(\beta,T)$ if $T\in [T_{k-s,s},T_{k-s,s+1})$ and is equal to
$\tm_{s+1}(\beta,T)$ if $T\in [T_{k-s,s+1},\infty)$ . In any case, $\ts_{2k}\le\tl_{k-s}(\beta,T_{k-s,s+1})$.
Suppose $(k-s)/(s+1)\le\a$. Then  $\ts_{2k}$ is equal to $\tl_{k-s}(\beta,T)$ which is less than or equal to $\tl_{k-s}(\beta,\infty)$.

The proof of (ii) is similar.
\end{proof}

By Lemma \ref{lem-sigma-odd}, in order to estimate $M_{2k-1}$, it is sufficient to estimates $\tl_k(\beta,T_{k,0}(\beta))$, $\tl_{k-j}(\beta,T_{k-j,j}(\beta))$ and $\tl_{k-s-1}(\beta,\infty)$.
\begin{lem}\label{lem-upperbound-odd-1}
We have the following:
\begin{itemize}
  \item [(i)] $\tl_k(\beta,T_{k,l}(\beta))< \tl_k(1,T_{k,l}(1))$ for all $k>l\ge0$.
  \item [(ii)] $\tl_k(1,T_{k,l}(1))<\tl_{k+c,l-c}(1)$ for $k>l\ge c>0$.
  \item [(iii)] $kT_{k,0}(1)=2T_{2,0}(1)$ for $k\ge2$, where
  $T_{2,0}(1)=\coth T_{2,0}(1)$. Hence $T_{2,0}(1)\simeq 1.2$.
\end{itemize}
\end{lem}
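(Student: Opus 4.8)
I would prove the three items separately. Item (iii) is a direct computation: $T_{k,0}(\beta)$ is by definition the $T$ solving $\tl_k(\beta,T)=\tm_0(\beta,T)$, and at $\beta=1$, using $\tl_k(1,T)=4k\pi\tanh\frac{kT}{2}$ and $\tm_0(1,T)=\frac{8\pi}{T}$ (Lemma~\ref{lem-eigen}), this reads $\frac{kT}{2}\tanh\frac{kT}{2}=1$. Hence $u:=\frac{k}{2}T_{k,0}(1)$ is the unique positive solution of $u\tanh u=1$, i.e. of $u=\coth u$; taking $k=2$ identifies this same solution as $T_{2,0}(1)$, so $kT_{k,0}(1)=2T_{2,0}(1)$, and a numerical solution of $s=\coth s$ gives $T_{2,0}(1)\approx1.2$.

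For item (i), fix $k>l\ge0$ and, on the $\beta$-interval where $T_{k,l}(\beta)$ exists, put $T(\beta)=T_{k,l}(\beta)$ and $V(\beta)=\tl_k(\beta,T(\beta))=\tm_l(\beta,T(\beta))$. Since $\partial_T\tl_k>0>\partial_T\tm_l$, the implicit function theorem makes $T$ and $V$ smooth; differentiating $\tl_k(\beta,T(\beta))=\tm_l(\beta,T(\beta))$ and eliminating $T'(\beta)$ gives
\begin{equation*}
V'(\beta)=\frac{\partial_T\tl_k\cdot\partial_\beta\tm_l-\partial_T\tm_l\cdot\partial_\beta\tl_k}{\partial_T\tl_k-\partial_T\tm_l},
\end{equation*}
with positive denominator. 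The plan is to substitute into the numerator the formulas of Lemma~\ref{lem-eigen}(ii),(iv) — which give $\partial_\beta\tl_k=\frac{2\pi\sinh^2(kT)}{\beta\tm_k}\partial_T\tl_k$ and $\partial_\beta\tm_l=\frac{2\pi\sinh^2(lT)}{\beta\tl_l}\partial_T\tm_l$ for $l\ge1$, and $\partial_\beta\tm_0=\frac{T}{\beta}\partial_T\tm_0$ — and then to use $\partial_T\tl_k\,\partial_T\tm_l<0$ together with the identity $\tl_n\tm_n=16n^2\pi^2/\beta$ (so that at $T(\beta)$ one has $\tm_k=16k^2\pi^2/(\beta V)$, and $\tl_l=16l^2\pi^2/(\beta V)$ for $l\ge1$, while $\tm_k=2k^2\pi T$ for $l=0$). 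After this simplification the sign of $V'(\beta)$ becomes that of $\frac{\sinh^2(kT)}{k^2}-\frac{\sinh^2(lT)}{l^2}$ when $l\ge1$, and of $\sinh(kT)-kT$ when $l=0$; both are positive (the first because $u\mapsto\frac{\sinh u}{u}$ is increasing and $kT>lT$), so $V$ is strictly increasing in $\beta$ and hence $V(\beta)<V(1)$ whenever $\beta<1$.

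For item (ii), I would work along the family $\{(m,n):m+n=k+l,\ m>n\ge0\}$, with $m,n$ regarded as real parameters (all relevant quantities extend smoothly). Set $\phi=\frac{m}{2}T_{m,n}(1)$, $\psi=\frac{n}{2}T_{m,n}(1)$; multiplying the defining equation $m\tanh\frac{mT}{2}=n\coth\frac{nT}{2}$ by $T/2$ gives $\phi\tanh\phi=\psi\coth\psi=:\rho$, and $\tanh\phi<1<\coth\psi$ forces $\psi<\phi$. Regarding $\phi=\phi(\rho)$, $\psi=\psi(\rho)$ as the increasing inverses of $t\mapsto t\tanh t$ and $t\mapsto t\coth t$ and using $\phi+\psi=\frac{k+l}{2}T_{m,n}(1)$, one has $\tl_m(1,T_{m,n}(1))=4m\pi\tanh\phi=\frac{4(k+l)\pi\,\rho}{\phi(\rho)+\psi(\rho)}$. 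Differentiating the defining relations gives $\rho\phi'-\phi=-\frac{2\phi^2}{\sinh(2\phi)+2\phi}<0$ and $\rho\psi'-\psi=\frac{2\psi^2}{\sinh(2\psi)-2\psi}>0$; from these $\phi/\psi$ is strictly decreasing in $\rho$, hence $m=\frac{k+l}{1+\psi(\rho)/\phi(\rho)}$ strictly increases as $\rho$ decreases (in agreement with Lemma~\ref{lem-intersection-2}), so (ii) will follow once $P(\rho):=\frac{\rho}{\phi(\rho)+\psi(\rho)}$ is shown to be strictly decreasing. Now $P'<0$ is equivalent to $\phi+\psi<\rho(\phi'+\psi')$, i.e. to $\frac{\psi^2}{\sinh(2\psi)-2\psi}>\frac{\phi^2}{\sinh(2\phi)+2\phi}$; since the right side is less than $\frac{\phi^2}{\sinh(2\phi)-2\phi}=q(\phi)$ with $q(t):=\frac{t^2}{\sinh(2t)-2t}$, it suffices that $q$ be decreasing on $(0,\infty)$ together with $\psi<\phi$, and $q'(t)<0$ reduces, after clearing denominators, exactly to $\tanh t<t$. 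This yields $\tl_k(1,T_{k,l}(1))<\tl_{k+c}(1,T_{k+c,l-c}(1))$.

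The substance of the argument is the two sign reductions — collapsing $V'(\beta)$ to $\sinh(kT)/k>\sinh(lT)/l$ in (i), and collapsing $P'(\rho)<0$ first to $q'<0$ and then to $\tanh t<t$ in (ii). Each is a finite computation once the derivative formulas of Lemma~\ref{lem-eigen} and the identity $\tl_n\tm_n=16n^2\pi^2/\beta$ are available; the one point to watch is keeping the $\tl_n$-versus-$\tl_n^2$ powers straight in the expression for $\partial_\beta\tm_n$ while simplifying. The resulting terminal inequalities ($\sinh u/u$ increasing, $\sinh u>u$, $\tanh t<t$) are all elementary, so I expect no genuine obstacle beyond the bookkeeping.
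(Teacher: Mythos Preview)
Your arguments for (i) and (iii) match the paper's. For (iii) the paper also reduces to the scale-invariance of $\frac{kT}{2}\tanh\frac{kT}{2}=1$; for (i) the paper likewise differentiates $V(\beta)=\tl_k(\beta,T_{k,l}(\beta))=\tm_l(\beta,T_{k,l}(\beta))$ in $\beta$ using the formulas of Lemma~\ref{lem-eigen} and lands on the same decisive comparison $\sinh^2(kT)/k^2>\sinh^2(lT)/l^2$ (phrased there as a proof by contradiction rather than as a direct computation of $V'$, but the content is identical).

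Your treatment of (ii) is a genuinely different route. The paper deduces (ii) from a separate, general-$\beta$ lemma (Lemma~\ref{lem-tanh-coth}): it implicitly differentiates the defining equation $\tl_a(\beta,x)=\tm_b(\beta,x)$ with respect to the real parameters $a$ and $b$, and shows $\partial_a u>\partial_b u>0$ via the monotonicity of $t\mapsto t^{-2}\big(\sinh^2 t\,\sqrt{\coth^2 t-\beta}-t\big)$ established in Lemma~\ref{lem-f}. You instead exploit $\beta=1$ to reparametrize by $\rho=\phi\tanh\phi=\psi\coth\psi$ and collapse the monotonicity of $\tl_m(1,T_{m,n}(1))$ along $m+n=\mathrm{const}$ to the single elementary inequality $\tanh t<t$. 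Your argument is shorter and self-contained for the statement as written; the paper's buys a general-$\beta$ monotonicity. One small point you should make explicit: the endpoint $c=l$ corresponds to $n=0$, i.e.\ $\psi\to0^+$ and $\rho\to1^+$, which sits at the boundary of your $\rho$-parametrization; the strict inequality there follows by continuity, since $\lim_{\rho\to1^+}4(k+l)\pi\,P(\rho)=4(k+l)\pi/T_{2,0}(1)=\tl_{k+l}(1,T_{k+l,0}(1))$ by~(iii).
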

\begin{proof} (i)  First assume that $l\ge1$. By definition
$$
\tl_k(\beta,T_{k,l}(\beta))=\tm_l (\beta,T_{k,l}(\beta)).
$$
In the following, denote $T_{k,l}(\beta)$ simply by $T(\beta)$.
By Lemma \ref{lem-eigen},
\bee
\begin{split}
\frac{d}{d\beta}(\tl_k(\beta,T(\beta)))=&
\frac{\p\tl_k}{\p\beta}+\frac{\p\tl_k}{\p T}\frac{dT}{d\beta}\\
=& \lf(\frac{2 \pi  \sinh^2(kT)  }{\tm_k \beta}  +\frac{dT}{d\beta}\ri) \frac{\p \tl_n}{\p T}\\
=&\lf(\frac{ \tl_k  \sinh^2(kT)   }{8\pi k^2    }  +\frac{dT}{d\beta}\ri) \frac{\p \tl_k}{\p T}
\end{split}
\eee
\bee
\begin{split}
\frac{d}{d\beta}(\tm_l(\beta,T(\beta)))=&
\frac{\p\tm_l}{\p\beta}+\frac{\p\tm_l }{\p T}\frac{dT}{d\beta}\\
=& \lf(\frac{\tm_l  \sinh^2(lT)  }{8\pi l^2}+\frac{dT}{d\beta}\ri) \frac{\p \tm_l}{\p T}
\end{split}
\eee
where we have used the fact that $\tl_n\tm_n=(16n^2\pi^2)/\beta$. Suppose
$\frac{d}{d\beta}(\tl_k(\beta,T(\beta)))\le0$, then
$$
\frac{dT}{d\beta}\le-\frac{ \tl_k  \sinh^2(kT)   }{8\pi k^2    }.
$$
because $\frac{\p \tl_k}{\p T}>0$. Hence
\bee
\begin{split}
\frac{d}{d\beta}(\tm_l(\beta,T(\beta)))\ge &
  \lf(\frac{\tm_l  \sinh^2(lT)  }{8\pi l^2}-\frac{ \tl_k  \sinh^2(kT)   }{8\pi k^2    }\ri) \frac{\p \tm_l}{\p T}\\
  >0
\end{split}
\eee
because $ \frac{\p \tm_l}{\p T}<0$, $\tl_k=\tm_l$,  and $k>l$ which implies $\sinh^2 (lT)/l^2<\sinh^2l(kT)/k^2$. However,
$\tm_l(\beta,T(\beta))=\tl_k(\beta,T(\beta))$, this is a contradiction. Hence (i) is true if $l\ge1$. The proof that $l=0$ is similar.

(ii) follows from Lemma \ref{lem-tanh-coth} in the following.

(iii) $T_{k,0}(1)$ is the unique positive $T$ such that
$$
4k\pi \tanh \lf(\frac{k}2T_{k,0}(1)\ri)=\frac{8\pi}{T_{k,0}(1)}.
$$
So
$$
4(k-1)\pi\tanh \lf(\frac{k-1}2\frac{k}{k-1}T_{k,0}(1)\ri)=\frac{8\pi}{\frac{k}{k-1}T_{k,0}(1)}.
$$
From this, we conclude that
$$
T_{k-1,0}=\frac{k}{k-1}T_{k,0}(1).
$$
So (iii) is true.
\end{proof}
\begin{lem}\label{lem-f}
Then function $f_\beta(t)=t^{-2}(\sinh^2 t\sqrt{\coth^2t-\beta}-t)$ is increasing for $t>0$.
\end{lem}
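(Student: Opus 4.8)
The plan is to prove monotonicity of $f_\beta$ by computing its derivative and reducing to an inequality that does not involve $\beta$ anymore, or at least one that can be handled uniformly in $\beta\in[0,1]$. First I would write $f_\beta(t)=t^{-2}g_\beta(t)$ with $g_\beta(t)=\sinh^2 t\,\sqrt{\coth^2 t-\beta}-t$, and note that since $f_\beta'(t)=t^{-3}\bigl(t g_\beta'(t)-2g_\beta(t)\bigr)$, it suffices to show $h_\beta(t):=t g_\beta'(t)-2 g_\beta(t)>0$ for $t>0$. Here it is convenient to observe that $h_\beta(0^+)=0$ (both $g_\beta(0^+)=0$ and $g_\beta'(0^+)=0$, since $\sinh^2 t\sqrt{\coth^2 t-\beta}=\sqrt{\cosh^2 t(\cosh^2 t-\beta\sinh^2 t)}\sim 1+O(t^2)$ as $t\to 0$, hence $g_\beta\sim \mathrm{const}\cdot t^2-t$... in fact $g_\beta(t)\to 0$ and $g_\beta'(t)\to -1+1=0$). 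Then it is enough to show $h_\beta'(t)=t g_\beta''(t)-g_\beta'(t)>0$ for $t>0$, i.e.\ that $g_\beta'(t)/t$ is increasing, equivalently that $g_\beta$ is ``more than quadratically convex''.

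Next I would make the substitution more explicit. Write $\phi(t)=\sqrt{\coth^2 t-\beta}$, so $g_\beta(t)=\sinh^2 t\,\phi(t)-t$. One computes $\phi'(t)=-\coth t/(\sinh^2 t\,\phi(t))$, which gives the pleasant identity $(\sinh^2 t\,\phi(t))'=\sinh(2t)\phi(t)+\sinh^2 t\,\phi'(t)=\sinh(2t)\phi(t)-\coth t/\phi(t)$. So $g_\beta'(t)=\sinh(2t)\phi-\coth t/\phi-1$. Multiplying through by $\phi$ to clear the denominator, one finds $\phi g_\beta' = \sinh(2t)\phi^2-\coth t-\phi=\sinh(2t)(\coth^2 t-\beta)-\coth t-\phi$. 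I would then organize the target inequality $h_\beta(t)=tg_\beta'-2g_\beta>0$ after multiplying by $\phi>0$: this becomes $t\phi g_\beta' - 2\phi g_\beta>0$, and using $\phi g_\beta=\sinh^2 t\,\phi^2-t\phi=\sinh^2 t(\coth^2 t-\beta)-t\phi=\cosh^2 t-\beta\sinh^2 t-t\phi$, everything is expressed through $\sinh t,\cosh t,\coth t,\phi$ and $\beta$. The resulting expression is a polynomial in hyperbolic functions plus a single $\phi$ term, and I would isolate the $\phi$ term, and then where an estimate is needed use the two-sided bound $\coth t - 1 \le \phi(t)\le \coth t$ (valid since $0\le\beta\le 1$ and $\coth^2 t-\beta\le\coth^2 t$, while $\coth^2 t-\beta\ge\coth^2 t-1=1/\sinh^2 t\ge(\coth t-1)^2$ exactly when $2\coth t-1\ge\sinh^{-2}t$, which holds for... ) — more robustly, I would try to show that the derivative of $h_\beta$ in $\beta$ has a definite sign so that the worst case is $\beta=1$ or $\beta=0$, reducing to the two explicit endpoint inequalities where $\phi=|\coth t - 1|$ resp.\ $\phi=\coth t$ and everything is elementary.

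The main obstacle I anticipate is controlling the sign of the mixed expression uniformly in $\beta$: the term $\sinh^2 t\sqrt{\coth^2 t-\beta}$ is concave in $\beta$, which makes the naive approach ``fix $\beta$ at an endpoint'' not immediately work for $h_\beta$ itself, since $h_\beta$ is a combination of $g_\beta$ and its derivatives and concavity in $\beta$ need not be inherited. My fallback, and probably the cleanest route, is to differentiate the quantity $t g_\beta'(t)-2g_\beta(t)$ with respect to $\beta$ directly: $\partial_\beta g_\beta = -\tfrac12\sinh^2 t/\phi<0$, and $\partial_\beta g_\beta' = \partial_t(\partial_\beta g_\beta)$, so $\partial_\beta h_\beta = t\,\partial_t(\partial_\beta g_\beta) - 2\partial_\beta g_\beta$. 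If I can show this has a fixed sign (I expect it is negative, so that $h_\beta\ge h_1$), the problem collapses to the single inequality $t g_1'(t)-2g_1(t)>0$ with $g_1(t)=\sinh^2 t\sqrt{\coth^2 t-1}-t=\sinh^2 t\cdot\tfrac{|\cosh t|}{\sinh t}\cdot\tfrac{1}{|\cosh t|}$... wait, $\coth^2 t - 1 = 1/\sinh^2 t$ so $\sqrt{\coth^2 t -1}=1/\sinh t$ and $g_1(t)=\sinh t - t$, whence $h_1(t)=t(\cosh t-1)-2(\sinh t-t)=t\cosh t-2\sinh t+t$, and $h_1'(t)=t\sinh t-\cosh t+1$, $h_1''(t)=t\cosh t-\sinh t>0$ for $t>0$ with $h_1'(0)=0$, $h_1(0)=0$, giving $h_1>0$ cleanly. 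So the whole proof reduces to: (1) $f_\beta'$ has the sign of $h_\beta$; (2) $h_\beta$ is monotone (decreasing) in $\beta$; (3) $h_1>0$ by the elementary three-times-antidifferentiation argument just sketched. Step (2) is where the real work lies and where I would spend most effort, writing out $\partial_\beta h_\beta$ and showing it is negative for all $t>0$, $\beta\in(0,1)$.
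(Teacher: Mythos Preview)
Your plan is correct and is essentially the paper's argument: reduce to $\beta=1$ by monotonicity in $\beta$, then verify the endpoint by the elementary chain $h_1(0)=h_1'(0)=0$, $h_1''(t)=t\cosh t\ge 0$. The endpoint computation you give, $h_1(t)=t\cosh t-2\sinh t+t$, is exactly the function the paper reduces to.

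The only place you overestimate the difficulty is step~(2). You already have everything needed: from $g_\beta=\sinh^2 t\,\phi-t$ and your formula $g_\beta'=\sinh(2t)\,\phi-\coth t/\phi-1$ one gets
\[
h_\beta(t)=t g_\beta'-2g_\beta
=2\sinh t\,(t\cosh t-\sinh t)\,\phi\;-\;\frac{t\coth t}{\phi}\;+\;t .
\]
Since $t\cosh t-\sinh t\ge 0$ for $t\ge 0$, the coefficient of $\phi$ is nonnegative, and both $\phi=\sqrt{\coth^2 t-\beta}$ and $-1/\phi$ are decreasing in $\beta$. Hence $h_\beta$ is decreasing in $\beta$ by inspection, with no further computation of $\partial_\beta h_\beta$ required. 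This is precisely how the paper argues (it writes out $f_\beta'=t^{-3}h_\beta$ directly and observes the same monotonicity), so your ``real work'' step is in fact a one-line observation.
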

\begin{proof}
Note that
\begin{equation*}
\begin{split}
f'_\beta(t)=2t^{-3}\sinh t(t\cosh t-\sinh t)\sqrt{\coth^2 t-\beta}-t^{-2}\left(\coth^2 t-\beta\right)^{-\frac{1}{2}}\coth t+t^{-2}
\end{split}
\end{equation*}
is decreasing on $\beta$ since $t\cosh t-\sin t\geq 0$. So, we only need to check that
$f'_1(t)\geq 0$. Note that
$f'_1(t)=-2t^{-3}\sinh t+t^{-2}\cosh t+t^{-2}$. So we only need to check that
$$h(t)=t+t\cosh t-2\sinh t\geq 0$$
for $t\geq 0$. For this we only need to check
$$h'(t)=1+t\sinh t-\cosh t\geq 0$$
for $t\geq 0$. This is clear since
$$h''(t)=t\cosh t\geq 0$$
for $t\geq 0$.
\end{proof}
\begin{lem}\label{lem-tanh-coth}
Let $x(a,b)$ be the solution of
\begin{equation}\label{eqn-a-b}
a\left(\coth(ax)-\sqrt{\coth^2(ax)-\beta}\right)=b\left(\coth(bx)+\sqrt{\coth^2(bx)-\beta}\right)
\end{equation}
 for $a>b\geq 0$. Let
$$u(a,b)=a\left(\coth(ax(a,b))-\sqrt{\coth^2(ax(a,b))-\beta}\right).$$
 Then, $u(a,b)<u(a+c,b-c)$ if $a>b\geq c>0$.
\end{lem}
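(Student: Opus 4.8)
The plan is to rewrite \eqref{eqn-a-b} as a transversal crossing of two explicit one‑parameter families and then differentiate the crossing value along the segment $s\mapsto(a+s,b-s)$, $s\in[0,c]$. Put
\[
F(a,x)=a\lf(\coth(ax)-\sqrt{\coth^2(ax)-\beta}\ri),\qquad G(b,x)=b\lf(\coth(bx)+\sqrt{\coth^2(bx)-\beta}\ri),
\]
so that $F$ and $G$ are proportional to $\tl_a$ and $\tm_b$; thus \eqref{eqn-a-b} reads $F(a,x)=G(b,x)$ at $x=x(a,b)$, and $u(a,b)=F(a,x(a,b))=G(b,x(a,b))$. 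First I would record the elementary facts, obtained exactly as in Lemma \ref{lem-eigen}: since $0<\beta\le1$ one has $F,G>0$,
\[
F_x=\frac{aF}{\sinh^2(ax)\sqrt{\coth^2(ax)-\beta}}>0,\qquad G_x=-\frac{bG}{\sinh^2(bx)\sqrt{\coth^2(bx)-\beta}}<0,
\]
and, from the scaling $F(\lambda a,x/\lambda)=\lambda F(a,x)$, $G(\lambda b,x/\lambda)=\lambda G(b,x)$, the Euler identities $aF_a=F+xF_x$ and $bG_b=G+xG_x$. In particular $F_a>0$, and $G_b>0$ as well, because $G+xG_x=G\bigl(1-bx(\sinh^2(bx)\sqrt{\coth^2(bx)-\beta})^{-1}\bigr)>0$, using $\sinh^2t\sqrt{\coth^2t-\beta}\ge\sinh t>t$ for $t>0$.

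Next, fix $a>b\ge c>0$ and assume $x(a,b)$ is defined; then $x(a+s,b-s)$ is defined for all $s\in[0,c]$ since $(a+s)/(b-s)$ is nondecreasing in $s$. Set $U(s)=u(a+s,b-s)$ and let $x(s)=x(a+s,b-s)$ be the crossing point. The crossing is transversal because $\p_x(F-G)=F_x-G_x>0$, so $x(s)$ is smooth; differentiating $F(a+s,x(s))=G(b-s,x(s))$ in $s$ gives
\[
x'(s)=\frac{F_a+G_b}{G_x-F_x}<0
\]
(the partials evaluated at the crossing; numerator positive, denominator negative). Substituting into $U'(s)=F_a+F_x\,x'(s)$ yields
\[
U'(s)=\frac{F_aG_x+F_xG_b}{G_x-F_x},
\]
whose denominator is negative, so $U'(s)>0$ is equivalent to $F_xG_b<F_a(-G_x)$. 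Using $F=G=u$ at the crossing and writing $P=\sinh^2(ax)\sqrt{\coth^2(ax)-\beta}$, $Q=\sinh^2(bx)\sqrt{\coth^2(bx)-\beta}$, we have $F_x=au/P$, $-G_x=bu/Q$, $F_a=u(P+ax)/(aP)$, $G_b=u(Q-bx)/(bQ)$, so after clearing the common positive factor $u^2/(PQ)$ the inequality becomes $a^2(Q-bx)<b^2(P+ax)$.

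This last inequality is where Lemma \ref{lem-f} comes in. By the definition of $f_\beta$ we have $\sinh^2t\sqrt{\coth^2t-\beta}=t^2f_\beta(t)+t$, hence $P+ax=(ax)^2f_\beta(ax)+2ax$ and $Q-bx=(bx)^2f_\beta(bx)$; dividing $a^2(Q-bx)<b^2(P+ax)$ by $a^2b^2x^2>0$ reduces it to
\[
f_\beta(bx)<f_\beta(ax)+\frac{2}{ax},
\]
which is immediate because $f_\beta$ is increasing (Lemma \ref{lem-f}) and $0<bx<ax$. Hence $U'(s)>0$ throughout $[0,c)$, and since $U$ is continuous up to $s=c$ — in the boundary case $b=c$ one reads $u(a+c,0)$ as the value $F(a+c,x)$ at the solution of $F(a+c,x)=2/x$, which is the $b\to0^+$ limit of \eqref{eqn-a-b} — it follows that $U$ is strictly increasing, so $u(a+c,b-c)=U(c)>U(0)=u(a,b)$.

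I expect the only substantive step to be the algebraic reduction in the previous paragraph: one must use $F=G=u$ at the crossing together with the Euler identities carefully so that $U'(s)>0$ collapses to the single clean inequality $a^2(Q-bx)<b^2(P+ax)$. Once this is done, the monotonicity of $f_\beta$ proved in Lemma \ref{lem-f} closes the argument with no further estimate; a minor point to check along the way is the transversality of the crossing, which is what lets the implicit function theorem produce a smooth $x(s)$.
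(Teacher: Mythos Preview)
Your argument is correct and follows the same route as the paper: both implicitly differentiate the defining equation, reduce $U'(s)>0$ to the single inequality $a^2(Q-bx)<b^2(P+ax)$ with $P=\sinh^2(ax)\sqrt{\coth^2(ax)-\beta}$ and $Q=\sinh^2(bx)\sqrt{\coth^2(bx)-\beta}$, and then finish with the monotonicity of $f_\beta$ from Lemma~\ref{lem-f}. Your use of the homogeneity $F(\lambda a,x/\lambda)=\lambda F(a,x)$ to obtain $aF_a=F+xF_x$ and $bG_b=G+xG_x$ is a clean shortcut compared to the paper's direct computation of $\partial x/\partial a$ and $\partial x/\partial b$, but the substance is identical.
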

\begin{proof}

By taking derivative with respect to $a$ and $b$ on \eqref{eqn-a-b}, we know that
\begin{equation}
\begin{split}
&\frac{\partial x}{\partial a}\\
=&-\left(\frac{a^2\left((\coth(ax)-\sqrt{\coth^2(ax)-\beta}\right)}{\sinh^2(ax)\sqrt{\coth^2(ax)-\beta}}+\frac{b^2\left(\coth(bx)+\sqrt{\coth^2(bx)-\beta}\right)}{\sinh^2(bx)\sqrt{\coth^2(bx)-\beta}}\right)^{-1}\\
&\times\left(\coth(ax)-\sqrt{\coth^2(ax)-\beta}\right)\left(1+\frac{ax}{\sinh^2(ax)\sqrt{\coth^2(ax)-\beta}}\right)
\end{split}
\end{equation}
\begin{equation}
\begin{split}
&\frac{\partial x}{\partial b}\\
=&\left(\frac{a^2\left((\coth(ax)-\sqrt{\coth^2(ax)-\beta}\right)}{\sinh^2(ax)\sqrt{\coth^2(ax)-\beta}}+\frac{b^2\left(\coth(bx)+\sqrt{\coth^2(bx)-\beta}\right)}{\sinh^2(bx)\sqrt{\coth^2(bx)-\beta}}\right)^{-1}\\
&\times\left(\coth(bx)+\sqrt{\coth^2(bx)-\beta}\right)\left(1-\frac{bx}{\sinh^2(bx)\sqrt{\coth^2(bx)-\beta}}\right)
\end{split}
\end{equation}
Note that $u(a,b)=b\coth(bx(a,b))$, so
\begin{equation}
\frac{\partial u}{\partial a}=-\frac{b^2\left(\coth(bx)+\sqrt{\coth^2(bx)-\beta}\right)}{\sinh^2(bx)\sqrt{\coth^2(bx)-\beta}}\frac{\partial x}{\partial a}.
\end{equation}
Similarly
\begin{equation}
\frac{\partial u}{\partial b}=\frac{a^2\left(\coth(ax)-\sqrt{\coth^2(ax)-\beta}\right)}{\sinh^2(ax)\sqrt{\coth^2(ax)-\beta}}\frac{\partial x}{\partial b}.
\end{equation}
In particular, $\frac{\p u}{\p a}>0$ and $\frac{\p u}{\p b}>0$.
Hence
\begin{equation}
\begin{split}
&\frac{\partial u}{\partial a}\bigg/\frac{\partial u}{\partial b}\\
=&\frac{a^{-2}\left(\sinh^2(ax)\sqrt{\coth^2(ax)-\beta}+ax\right)}{b^{-2}\left(\sinh^2(bx)\sqrt{\coth^2(bx)-\beta}-bx\right)}\\
>&\frac{a^{-2}\left(\sinh^2(ax)\sqrt{\coth^2(ax)-\beta}-ax\right)}{b^{-2}\left(\sinh^2(bx)\sqrt{\coth^2(bx)-\beta}-bx\right)}\\
>&1
\end{split}
\end{equation}
by Lemma \ref{lem-f} and that $a>b$. Suppose $a>b\ge t>0$.
  Let $f(t)=u(a+t,b-t)$, then
$$f'=\frac{\partial u}{\partial a}-\frac{\partial u}{\partial b}>0.$$
From this we get the conclusion.
\end{proof}
\begin{thm}\label{thm-odd-upperbound}
$M_{2k-1}=\frac{4k\pi}{T_{2,0}(1)}$ is achieved only when $\alpha=1$ and $T=T_{k,0}(1)=\frac{2T_{2,0}(1)}{k}$.
\end{thm}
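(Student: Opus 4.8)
The plan is to assemble ingredients already in place, reducing the computation of $M_{2k-1}$ to a comparison of finitely many explicit numbers. Recall $\alpha\ge1$, and let $s=s(\alpha)\ge0$ be the largest integer with $(k-s)/s>\alpha$; since $\alpha\ge1$ this forces $(1+\alpha)s<k$, so $s<k/2$, and in particular $2j<k$ for $1\le j\le s$. By Lemma~\ref{lem-sigma-odd}, for every $\beta\le1$ and every $T>0$ the quantity $\ts_{2k-1}(\beta,T)$ is dominated by one of
\begin{equation*}
\tl_k(\beta,T_{k,0}(\beta)),\qquad \tl_{k-j}(\beta,T_{k-j,j}(\beta))\ \ (1\le j\le s),\qquad \tl_{k-s-1}(\beta,\infty).
\end{equation*}
Hence it suffices to bound each of these three types by $4k\pi/T_{2,0}(1)$, strictly except in the first type and except when $\beta=1$, and then to check that $4k\pi/T_{2,0}(1)$ is actually attained, and only at $\alpha=1$, $T=T_{k,0}(1)$.

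For the first quantity, Lemma~\ref{lem-upperbound-odd-1}(i) with $l=0$ gives $\tl_k(\beta,T_{k,0}(\beta))\le\tl_k(1,T_{k,0}(1))$, strict for $\beta<1$; and Lemma~\ref{lem-upperbound-odd-1}(iii) gives $kT_{k,0}(1)/2=T_{2,0}(1)$ (for $k=1$ this is immediate from the definition of $T_{1,0}$), so by Lemma~\ref{lem-eigen}(v), $\tl_k(1,T_{k,0}(1))=4k\pi\tanh T_{2,0}(1)=4k\pi/T_{2,0}(1)$, the last equality because $T_{2,0}(1)=\coth T_{2,0}(1)$. For the middle quantities, since $2j<k$ one applies Lemma~\ref{lem-upperbound-odd-1}(i) and then part~(ii) with $c=j$ to get $\tl_{k-j}(\beta,T_{k-j,j}(\beta))<\tl_{k-j}(1,T_{k-j,j}(1))<\tl_k(1,T_{k,0}(1))=4k\pi/T_{2,0}(1)$. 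For the last quantity, maximality of $s$ gives $(1+\alpha)s<k\le(1+\alpha)(s+1)$, hence $k-s-1\le\alpha k/(1+\alpha)$, so by Lemma~\ref{lem-eigen}(iii), $\tl_{k-s-1}(\beta,\infty)=2(k-s-1)\pi(1+\alpha)/\alpha\le2k\pi<4k\pi/T_{2,0}(1)$, the last inequality because $T_{2,0}(1)\simeq1.2<2$. This proves $M_{2k-1}\le4k\pi/T_{2,0}(1)$ and that equality forces $\beta=1$ with $\ts_{2k-1}$ governed by the first type.

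To conclude I would work on the slice $\beta=1$. There $\tm_0(1,T)=8\pi/T$ is decreasing and $\tl_k(1,T)=4k\pi\tanh(kT/2)$ is increasing, and since $T_{k,0}(1)<T_{k-1,0}(1)$ (Lemma~\ref{lem-intersection-2}), tracking multiplicities via Lemma~\ref{lem-eigen}(i) gives $\ts_{2k-1}(1,T)=\tl_k(1,T)$ for $0<T\le T_{k,0}(1)$ and $\ts_{2k-1}(1,T)=\tm_0(1,T)$ for $T_{k,0}(1)<T<T_{k-1,0}(1)$, while for $T\ge T_{k-1,0}(1)$ the bounds of the previous paragraph (at $\beta=1$) are strictly below $4k\pi/T_{2,0}(1)$. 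Thus $\ts_{2k-1}(1,\cdot)$ increases strictly to $\tl_k(1,T_{k,0}(1))=\tm_0(1,T_{k,0}(1))=4k\pi/T_{2,0}(1)$ at $T=T_{k,0}(1)=2T_{2,0}(1)/k$ and then decreases strictly. Hence $M_{2k-1}=4k\pi/T_{2,0}(1)$, attained exactly at $\alpha=1$, $T=T_{k,0}(1)$.

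I expect the main obstacle to be the estimate on $\tl_{k-s-1}(\beta,\infty)$: unlike the other two types it is not an instance of Lemma~\ref{lem-upperbound-odd-1}, but needs the elementary counting inequality $k-s-1\le\alpha k/(1+\alpha)$ read off from the definition of $s$, which then must be combined with the numerical fact $T_{2,0}(1)<2$ to close the gap. The other thing to watch is the bookkeeping in Lemma~\ref{lem-sigma-odd}: keeping straight which $T$-interval one is in (so as to invoke the correct line of that lemma) and the strictness of each inequality used.
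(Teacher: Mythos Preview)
Your proposal is correct and follows essentially the same approach as the paper: reduce via Lemma~\ref{lem-sigma-odd} to bounding $\tl_k(\beta,T_{k,0}(\beta))$, the intermediate $\tl_{k-j}(\beta,T_{k-j,j}(\beta))$, and $\tl_{k-s-1}(\beta,\infty)$; handle the first two by Lemma~\ref{lem-upperbound-odd-1}, and the last by the counting inequality from the definition of $s$ combined with $T_{2,0}(1)<2$. The only difference is cosmetic: the paper folds the $j=s$ term into the max on the last interval rather than into the middle list, and it handles the equality case in one sentence (``from the proof we see that $\ts_{2k-1}(\beta,T)<M_{2k-1}$ if $\alpha>1$ or $T\neq T_{k,0}$''), whereas you spell out the monotonicity on the slice $\beta=1$ explicitly.
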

\begin{proof} Let $\a\ge1$, $k\ge1$. Let $s$ be the smallest integer such that
$$
\a<\frac{k-s}s.
$$
Suppose $s\ge1$, then by Lemmas \ref{lem-sigma-odd},   \ref{lem-upperbound-odd-1} and \ref{lem-eigen}
$$
M_{2k-1}\le \max\lf\{\tl_k(1,T_{k,0}(1)),\frac{2(k-s-1)\pi(1+\a)}\a \ri\}.
$$
By Lemmas \ref{lem-eigen} and \ref{lem-tanh-coth},
$$
\tl_k(1,T_{k,0}(1)=\tm_0(1,T_{k,0}(1)=\frac{8\pi}{T_{k,0}(1)}=\frac{4k\pi}{T_{2,0}(1)}.
$$
On the other hand, since $\a\ge \frac{k-s-1}{s+1}$ and note that $k-s-1\ge0$,
\bee
\begin{split}
\frac{ (k-s-1) (1+\a)}\a\le & (s+1)+k-s-1\\
<& \frac{2k}{T_{2,0}(1)}
\end{split}
\eee
because $T_{2,0}(1)\simeq 1.2.$

If $s=0$, then by Lemmas \ref{lem-eigen} and \ref{lem-tanh-coth},
$$
M_{2k-1}\le \max\lf\{\tl_k(1,T_{k,0}(1)), \frac{2(k-1)\pi(1+\a)}\a\ri\}.
$$
\bee
\begin{split}
\frac{ (k-1) (1+\a)}\a\le & 1+k-1\\
<& \frac{2k}{T_{2,0}(1)}
\end{split}
\eee
because $\a\ge k-1$. Hence we conclude that
$$
M_{2k-1}\le \frac{4k\pi}{T_{2,0}(1)}.
$$
If $\a=1$, $T=T_{k,0}(1)$, then $\ts_{2k-1}(1,T_{k,0})=\tl_k(1,T_{k,0})=\frac{4k\pi}{T_{2,0}(1)}$. So
 $M_{2k-1}=\frac{4k\pi}{T_{2,0}(1)}$. On the other hand  if $\a>1$ or $T\neq T_{k,0}$, from the proof, we can see that $\ts_{2k-1}(\beta,T )<M_{2k-1}$.
\end{proof}
\begin{thm}\label{thm-even-upperbound}
 \begin{itemize}
                                       \item [(i)] $M_2=4\pi$ and is achieved only when $\a=1$, $T=\infty$
                                       \item [(ii)] For $k\ge2$,
                                       $$M_{2k}=\tl_k(1,T_{k,1}(1))=4k\pi\tanh\lf(\frac{kT_{k,1}(1)}2\ri)=4\pi\coth\lf(\frac{kT_{k,1}(1)}2\ri).$$
                                     It is achieved only when $\a=1$, $T=T_{k,1}(1)$.\end{itemize}
\end{thm}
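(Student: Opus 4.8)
For (i), the plan is short: Lemma~\ref{lem-sigma-even}(i) gives $\ts_2(\beta,T)=\tl_1(\beta,T)$, which by Lemma~\ref{lem-eigen}(ii) and (iv) is strictly increasing in $T$ and in $\beta$. Since $\beta\le1$ with equality only when $\a=1$, and $\tl_1(1,T)=4\pi\tanh(T/2)$ increases to $4\pi$ as $T\to\infty$ (Lemma~\ref{lem-eigen}(v)), we get $\ts_2(\beta,T)<4\pi$ for every finite $T$ and every $\a\ge1$, the value $4\pi$ being approached only as $\a=1$, $T\to\infty$; hence $M_2=4\pi$. For (ii) I would fix $k\ge2$ and $\a\ge1$ and let $s$ be as in Lemma~\ref{lem-sigma-even}. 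Parts (ii)--(iii) of that lemma bound $\sup_{T>0}\ts_{2k}(\beta,T)$ by the maximum of finitely many quantities, each of exactly one of two kinds: \emph{type (A)}, intersection values $\tl_m(\beta,T_{m,n}(\beta))$ with $(m,n)$ equal to $(k,0)$, or $(k-j,j+1)$ for some $0\le j\le s-1$, or $(k-s,s+1)$ when $(k-s)/(s+1)>\a$ (in each case $m>n$, so $T_{m,n}$ is defined); and \emph{type (B)}, a limiting value $\tl_{k-s}(\beta,\infty)$ (when $(k-s)/(s+1)\le\a$) or, when $s=0$, $\tl_k(\beta,\infty)$ (when $\a\ge k$). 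The plan is to show each of these is $\le\tl_k(1,T_{k,1}(1))$, and strictly less unless $\beta=1$.

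For a type-(A) quantity I would first apply Lemma~\ref{lem-upperbound-odd-1}(i) to pass to $\beta=1$: $\tl_m(\beta,T_{m,n}(\beta))<\tl_m(1,T_{m,n}(1))$. If $(m,n)=(k,1)$ (the case $j=0$) we are done. If $(m,n)=(k,0)$, then $T_{k,0}(1)<T_{k,1}(1)$ (Lemma~\ref{lem-intersection-2}) and monotonicity of $\tl_k(1,\cdot)$ in $T$ (Lemma~\ref{lem-eigen}(ii)) give $\tl_k(1,T_{k,0}(1))<\tl_k(1,T_{k,1}(1))$. In every remaining case $n\ge2$ and $m+n-1=k$; setting $c=n-1$ one has $m>n\ge c>0$ (a routine check using the side conditions $j\le s-1$, $2s<k$, resp.\ $(k-s)/(s+1)>\a\ge1$), so Lemma~\ref{lem-upperbound-odd-1}(ii) gives $\tl_m(1,T_{m,n}(1))<\tl_{m+c}\big(1,T_{m+c,\,n-c}(1)\big)=\tl_k(1,T_{k,1}(1))$. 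Thus every type-(A) quantity is $<\tl_k(1,T_{k,1}(1))$.

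For a type-(B) quantity, Lemma~\ref{lem-eigen}(iii) gives $\tl_m(\beta,\infty)=2m\pi(1+\a^{-1})$, and the side condition under which it occurs bounds $\a^{-1}$: $\a^{-1}\le(s+1)/(k-s)$ when $m=k-s$, and $\a^{-1}\le k^{-1}$ when $m=k$, so in either case $\tl_m(\beta,\infty)\le2(k+1)\pi$. Hence it suffices to show $2(k+1)\pi<\tl_k(1,T_{k,1}(1))$. Put $p=T_{k,1}(1)/2$; by Lemma~\ref{lem-eigen}(v) and the definition of $T_{k,1}$ we have $\tl_k(1,T_{k,1}(1))=\tm_1(1,T_{k,1}(1))=4\pi\coth p$ with $k\tanh(kp)=\coth p$, so the claim is $\coth p>(k+1)/2$. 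Suppose not. Then $\coth p\le\frac{k+1}{2}$ gives $p\ge\frac12\ln\frac{k+3}{k-1}$, while $k\tanh(kp)=\coth p\le\frac{k+1}{2}$ gives $kp\le\frac12\ln\frac{3k+1}{k-1}$; combining, $\big(1+\tfrac{4}{k-1}\big)^{k}\le\frac{3k+1}{k-1}$, contradicting Bernoulli's inequality $\big(1+\tfrac{4}{k-1}\big)^{k}\ge1+\tfrac{4k}{k-1}=\frac{5k-1}{k-1}>\frac{3k+1}{k-1}$, valid for $k\ge2$. Hence $\coth p>(k+1)/2$ and every type-(B) quantity is $<\tl_k(1,T_{k,1}(1))$.

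Putting the two cases together yields $M_{2k}\le\tl_k(1,T_{k,1}(1))$. To finish I would check, using the eigenvalue ordering in Lemma~\ref{lem-sigma-even}, that at $\a=1$, $T=T_{k,1}(1)$ one has $\ts_{2k}=\tl_k=\tm_1$, so the bound is attained there; and that $T\mapsto\ts_{2k}(1,T)$ is piecewise monotone with its unique maximum at $T=T_{k,1}(1)$, while every inequality above is strict for $\a>1$, so the supremum is attained only at $\a=1$, $T=T_{k,1}(1)$ --- exactly as in the proof of Theorem~\ref{thm-odd-upperbound}. The identities $\tl_k(1,T_{k,1}(1))=4k\pi\tanh(kT_{k,1}(1)/2)=4\pi\coth(T_{k,1}(1)/2)$ then follow from Lemma~\ref{lem-eigen}(v). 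The step I expect to be the real obstacle is the type-(B) estimate $\coth(T_{k,1}(1)/2)>(k+1)/2$: it is not furnished by the monotonicity lemmas and must be extracted directly from the transcendental relation $k\tanh(kp)=\coth p$, where the Bernoulli-inequality comparison above is the crucial point; a secondary chore is matching the branches of Lemma~\ref{lem-sigma-even} to the (A)/(B) split and verifying the index inequalities $m>n\ge c>0$ in each.
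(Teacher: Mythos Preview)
Your argument is correct and follows the paper's own proof closely: same reduction via Lemma~\ref{lem-sigma-even} to a finite list of candidate maxima, same use of Lemma~\ref{lem-upperbound-odd-1}(i)--(ii) (equivalently Lemma~\ref{lem-tanh-coth}) to push all the intersection-type bounds $\tl_m(\beta,T_{m,n}(\beta))$ up to $\tl_k(1,T_{k,1}(1))$, and the same bound $\tl_{k-s}(\beta,\infty)\le 2(k+1)\pi$ for the limiting-type terms.

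The one genuine difference is in how you finish the type-(B) estimate. The paper compares $2(k+1)\pi$ to $\tl_k(1,T_{k,0}(1))=4k\pi/T_{2,0}(1)$ and invokes the numerical fact $T_{2,0}(1)\simeq 1.2<4/3\le 2k/(k+1)$, then uses $T_{k,0}(1)<T_{k,1}(1)$. You instead prove $\coth\!\big(T_{k,1}(1)/2\big)>(k+1)/2$ directly from the defining relation $k\tanh(kp)=\coth p$ via the Bernoulli-inequality contradiction. Your route is slightly longer but entirely self-contained, eliminating any appeal to an approximate numerical value; the paper's route is quicker once $T_{2,0}(1)\simeq 1.2$ is accepted. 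Either way, the rest of the proof --- attainment at $\a=1$, $T=T_{k,1}(1)$, strictness elsewhere, and the closed-form identities from Lemma~\ref{lem-eigen}(v) --- is handled exactly as in the paper. (Incidentally, your final identity $\tl_k(1,T_{k,1}(1))=4\pi\coth\!\big(T_{k,1}(1)/2\big)$ is the correct one, being $\tm_1(1,T_{k,1}(1))$; the extra factor $k$ inside the $\coth$ in the stated theorem is a typo.)
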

\begin{proof}
(i) By Lemma \ref{lem-sigma-even}, $\sigma_2(\beta,T)=\tl_1(\beta,T)< \tl_1(1,\infty)=4\pi$.

(ii) For $k\ge 2$, and $s\ge 0$ be the largest integer so that $(k-s)>\a s$. Suppose $s\ge1$, by Lemmas \ref{lem-sigma-even}, \ref{lem-tanh-coth}, we have
$$
\ts_{2k}(\beta,T)\le \max\{\tl_k(\beta ,T_{k,0}(\beta)), \tl_{k}(1,T_{k,1}(1))\},
$$
if $(k-s)/(s+1)>\a$
and
$$
\ts_{2k}(\beta,T)\le \max\{\tl_k(\beta ,T_{k,0}(\beta)), \tl_{k}(1,T_{k,1}(1)),\tl_{k-s}(\beta,\infty)\},
$$
if   $(k-s)/(s+1)\le\a$.
By Lemma \ref{lem-tanh-coth}, $\tl_k(\beta,T_{k,0}(\beta))\le \tl_k(1,T_{k,0}(1))< \tl_k(1,T_{k,1}(1))$ by Lemmas \ref{lem-eigen} and \ref{lem-intersection-2}.

On the other hand, if $\a\le (k-s)/(s+1)$, then
\bee
\begin{split}
\tl_{k-s}(\beta,\infty)=& 2(k-s)\pi\lf(1+\frac1\a\ri)\\
\le&2(k-s)\pi+2(s+1)\pi\\
<&\frac{8k\pi}{2T_{2,0}(1)}\\
=&\tl_{k}(1,T_{k,0}(1))\\
<&\tl_{k}(1,T_{k,1}(1))
\end{split}
\eee
because $T_{2,0}(1)\simeq 1.2$ and $k\ge2$. In any case, $M_{2k}\le \tl_k(1,T_{k,1}(1))$ which is equal to $\ts_{2k}(1,T_{k,1}(1))$. Hence
$M_{2k}\le \tl_k(1,T_{k,1}(1))$

If $s=0$, we can prove similarly that $M_{2k}\le \tl_k(1,T_{k,1}(1))$.

From the proof above, one can conclude that $M_{2k}$ is achieved only when
$\a=1$ and $T=T_{k,1}(1)$.
\end{proof}
\section{Geometric pictures for metrics attaining the maximal values}
In \cite{FraserSchoen-2011}, Fraser and Schoen used the first eigenfunctions to embed the cylinder with maximal normalized first Steklov eigenvalue among all rotationally symmetric metrics into Euclidean space and discovered that it is a portion of the catenoid which solves the free boundary value problem in a ball. In \cite{FraserSchoen-2011}, Fraser and Schoen called this the critical catenoid. In this section, we will show that similar conclusions are true for the other Steklov eigenvalues except for the second Steklov eigenvalue.

It was obtained in \cite{FraserSchoen-2011} that all the normalized Steklov eigenvalues of  the rotational metric \eqref{metric-1} on the cylinder $\Sigma=[0,T]\times \sph^1$ are
$\tilde\lambda_n(\beta,T)$ and $\tilde\mu_n(\beta,T)$ for $n=0,1,2,\cdots$ as in the last section with $\tilde\lambda_0=0$. Moreover, the eigen-space of $\tilde\lambda_n$ is generated by
\begin{equation}
x_n(t,\theta)=\cosh(n(t-\tau_n))\cos(n\theta)
\end{equation}
and
\begin{equation}
y_n(t,\theta)=\cosh(n(t-\tau_n))\sin(n\theta).
\end{equation}
where $\tau_n$ is the unique positive number satisfying
\begin{equation}
\tanh(n\tau_n)=\frac{1+\alpha}{2}\left(\coth(nT)-\sqrt{\coth^2(nT)-\beta}\right)
\end{equation}
with $n=1,2,\cdots$. In particular, when $\alpha=1$,
\begin{equation}
\tau_n=\frac{T}{2}.
\end{equation}
The eigen-space of $\tilde \mu_0$ is generated by
\begin{equation}
z_0(t,\theta)=t-\frac{T}{1+\alpha}
\end{equation}
and the eigen-space of $\tilde\mu_n$ is generated by
\begin{equation}
z_n(t,\theta)=\sinh(n(t-\xi_n))\cos(n\theta)
\end{equation}
and
\begin{equation}
w_n(t,\theta)=\sinh(n(t-\xi_n))\sin(n\theta).
\end{equation}
where $\xi_n$ is the unique positive number such that
\begin{equation}
\coth (n\xi_n)=\frac{1+\alpha}{2}\left(\coth(nT)+\sqrt{\coth^2(nT)-\beta}\right)
\end{equation}
with $n=1,2,\cdots$. In particular, when $\alpha=1$,
\begin{equation}
\xi_n=\frac{T}{2}.
\end{equation}

Recall that the catenoid is an embedded minimal surface in $\R^3$ parameterized by
\begin{equation}
\left\{\begin{array}{l}
x(t,\theta)=t\\
y(t,\theta)=\cosh t\cos\theta\\
z(t,\theta)=\cosh t\sin\theta.
\end{array}\right.
\end{equation}
As defined in \cite{FraserSchoen-2011}, the critical catenoid is a portion of the catenoid with $t\in [-T_{2,0}(1),T_{2,0}(1)]$ which is characterized by that the boundary of the critical catenoid meets the boundary of the ball orthogonally.

Similarly, note that the following immersed catenoid
\begin{equation}
\left\{\begin{array}{l}
x(t,\theta)=nt\\
y(t,\theta)=\cosh (nt)\cos (n\theta)\\
z(t,\theta)=\cosh (nt)\sin (n\theta).
\end{array}\right.
\end{equation}
is also a minimal surface in $\R^3$. In fact, it is a $n$ to $1$ cover of the catenoid. We call the surface an $n$-catenoid. Similarly as in \cite{FraserSchoen-2011}, we called the portion of the $n$-catenoid with $t\in [-T_{2,0}(1)/n,T_{2,0}(1)/n]$ a critical $n$-catenoid which is also characterized by that the boundary of the critical $n$-catenoid meets the boundary of the ball orthogonally. Similarly as in \cite{FraserSchoen-2011}, we have the following conclusion by the computations in the last section.
\begin{thm}\label{thm-odd-eigenvalue}
The maximum of the  normalized $2n-1^{th}$ Steklov eigenvalue among all rotationally symmetric metrics of the form \eqref{metric-1} on the cylinder is achieved by
the $n$-critical catenoid immersed in $\R^3$.
\end{thm}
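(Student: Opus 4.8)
The plan is to exploit Theorem \ref{thm-odd-upperbound}: the supremum $M_{2n-1}$ is attained only at the rotationally symmetric metric $g=f(t)^2(dt^2+d\theta^2)$ with $\a=1$ and $T=T_{n,0}(1)=2T_{2,0}(1)/n$. So it suffices to show that, at this maximizing metric, the map built from the $\ts_{2n-1}$-eigenfunctions is (an immersion realizing) the critical $n$-catenoid introduced above, and that the metric it induces is again $g$ up to scaling. This is the exact analogue, for the $(2n-1)$-th eigenvalue, of the Fraser--Schoen picture for the first one.

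First I would pin down the $\ts_{2n-1}$-eigenspace of $g$. Since $T_{n,0}(1)<T_{n-1,0}(1)<\cdots$ by Lemma \ref{lem-intersection-2}, and $\tl_n(1,T_{n,0}(1))=\tm_0(1,T_{n,0}(1))$ by Lemma \ref{lem-upperbound-odd-1}(iii), the monotonicities in Lemma \ref{lem-eigen} force the nonzero normalized eigenvalues at $T=T_{n,0}(1)$ to begin with $\tl_1<\cdots<\tl_{n-1}$, each of multiplicity two (these are $\ts_1,\dots,\ts_{2n-2}$), followed by the coincidence $\ts_{2n-1}=\ts_{2n}=\ts_{2n+1}=\tl_n=\tm_0$, whose eigenspace is three-dimensional. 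Because $\a=1$ forces $\tau_n=T/2$, this eigenspace is spanned by
\bee
z_0=t-\tfrac T2,\qquad x_n=\cosh\lf(n(t-\tfrac T2)\ri)\cos(n\theta),\qquad y_n=\cosh\lf(n(t-\tfrac T2)\ri)\sin(n\theta).
\eee

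Next I would form $\Phi=(n z_0,\,x_n,\,y_n):\Sigma\to\R^3$ and reparametrize by $s=n(t-\tfrac T2)$, so that $s$ ranges over $[-nT/2,nT/2]=[-T_{2,0}(1),T_{2,0}(1)]$ and $\Phi=(s,\cosh s\cos n\theta,\cosh s\sin n\theta)$ --- precisely the critical $n$-catenoid. Two short computations then remain. (a) $\Phi$ is a minimal immersion: each coordinate is harmonic for $dt^2+d\theta^2$, hence for $g$ by conformal invariance of the Laplacian in dimension two, and $|\Phi_t|^2=|\Phi_\theta|^2=n^2\cosh^2 s$, $\langle\Phi_t,\Phi_\theta\rangle=0$, so $\Phi$ is conformal and free of branch points. (b) The free boundary (orthogonality) condition: on $\p\Sigma=\{t=0\}\cup\{t=T\}$ one has $s=\pm T_{2,0}(1)$, so $|\Phi|^2=s^2+\cosh^2 s$ is the same constant at both ends (both terms being even), and the outward normal derivative $\p_\nu\Phi=\pm f^{-1}\Phi_t$ is parallel to $\Phi$ exactly when $s\sinh s=\cosh s$ at $s=T_{2,0}(1)$, that is, exactly when $s=\coth s$, which is the defining equation of $T_{2,0}(1)$. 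Rescaling $\Phi$ by $\bigl(T_{2,0}(1)^2+\cosh^2 T_{2,0}(1)\bigr)^{-1/2}$ then sends $(\Sigma,\p\Sigma)$ into $(B^3,\p B^3)$ with orthogonal boundary contact. Finally $\Phi^*(\text{Eucl.})=n^2\cosh^2\lf(n(t-\tfrac T2)\ri)(dt^2+d\theta^2)$ is rotationally symmetric with ratio parameter $\a=1$ and length parameter $2T_{2,0}(1)/n$, hence agrees with $g$ up to scale; since its coordinate functions are $\ts_{2n-1}$-eigenfunctions, the maximum is achieved by this surface.

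The analytic core --- computing $M_{2n-1}$ and showing the maximizer has $\a=1$ and $T=2T_{2,0}(1)/n$ --- is already Theorem \ref{thm-odd-upperbound}, so what remains is essentially bookkeeping. The step I would take care with is the passage from eigenvalue data to geometry: the condition $\a=1$ is exactly what symmetrizes the $\tl_n$-eigenfunctions about $t=T/2$ (so that $\Phi$ is a genuine $n$-fold catenoid, not a sheared copy of one), and $T=2T_{2,0}(1)/n$ is exactly what makes the catenoid ``critical'', with both the correct range of $s$ and the orthogonal boundary contact resting on the single identity $T_{2,0}(1)=\coth T_{2,0}(1)$. I would make sure that this is spelled out rather than merely asserted; everything else is direct verification.
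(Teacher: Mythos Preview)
Your proposal is correct and follows essentially the same approach as the paper: invoke Theorem \ref{thm-odd-upperbound} to locate the maximizer at $\a=1$, $T=2T_{2,0}(1)/n$, identify the $\ts_{2n-1}$-eigenspace as $\mathrm{span}\{z_0,x_n,y_n\}$, and write down the resulting immersion as the critical $n$-catenoid. If anything, you supply more detail than the paper does---the explicit checks of conformality, minimality, and the free boundary condition via $T_{2,0}(1)=\coth T_{2,0}(1)$---whereas the paper simply records the parametrization and declares the conclusion.
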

\begin{proof}
By Theorem \ref{thm-odd-upperbound}, the maximum of the normalized $2n-1^{th}$ Steklov eigenvalue $\tilde\sigma_{2n-1}(\beta,T)$ among all rotationally symmetric metrics of the form \eqref{metric-1} on the cylinder is achieved only when $\alpha=1$ and $T=T_{n,0}(1)=\frac{2T_{2,0}(1)}{n}$. In this case, the eigen-space of $\tilde\sigma_{2n-1}$ is generated by
$$z_0(t,\theta)=t-\frac{T_{n,0}(1)}{2}=t-\frac{T_{2,0}(1)}{n},$$
$$x_n(t,\theta)=\cosh\left(n(t-T_{2,0}(1)/n)\ri)\cos(n\theta)$$
and
$$y_n(t,\theta)=\cosh\lf(n(t-T_{2,0}(1)/n)\ri)\sin(n\theta)$$
with $t\in[0,2T_{2,0}(1)/n]$.

From this, we can use the eigen-functions of $\tilde\sigma_{2n-1}$ to immerse the cylinder into $\R^3$ as the follows:
\begin{equation}
\left\{\begin{array}{l}
x(t,\theta)=n\lf(t-\frac{T_{2,0}(1)}{n}\ri)\\
y(t,\theta)=\cosh \lf(n\lf(t-\frac{T_{2,0}(1)}{n}\ri)\ri)\cos (n\theta)\\
z(t,\theta)=\cosh \lf(n\lf(t-\frac{T_{2,0}(1)}{n}\ri)\ri)\sin (n\theta).
\end{array}\right.
\end{equation}
This gives us the conclusion.
\end{proof}
Recall that in \cite{FraserSchoen-2013}, the critical M\"obius band is a portion of the immersed M\"obius band in $\R^4$ parameterized by
\begin{equation*}
X(t,\theta)=(2\sinh t\cos\theta,2\sinh t\sin\theta,\cosh(2t)\cos(2\theta),\cosh(2t)\sin(2\theta))^T
\end{equation*}
with $t\in [-T_{2,1}(1)/2,T_{2,1}(1)/2]$ which is also characterized by that the boundary of the critical M\"obius band meets the boundary of the ball orthogonally.

Then, similarly as in Theorem \ref{thm-odd-eigenvalue} with suitable eigen-functions, we have the following conclusion.
\begin{thm}\label{thm-4-eigenvalue}
The maximum of the normalized $4^{th}$ Steklov eigenvalue among all rotational symmetric metrics on the cylinder is achieved by the critical Mobius band whose boundary meets the boundary of the ball orthogonally.
\end{thm}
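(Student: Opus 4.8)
The plan is to mimic the proof of Theorem \ref{thm-odd-eigenvalue} verbatim, only with the eigenfunctions associated to $\ts_4$ in place of those associated to $\ts_{2n-1}$. First I would invoke Theorem \ref{thm-even-upperbound}(ii) in the case $k=2$: the supremum $M_4 = \tl_2(1,T_{2,1}(1))$ is attained only when $\a=1$ and $T = T_{2,1}(1)$. Having pinned down the metric, the next step is to identify the eigenspace of $\ts_4$ at this extremal configuration. By Lemma \ref{lem-sigma-even}, at $\a=1$ the ordering of eigenvalues below $\ts_4$ is $0 = \tl_0 < \tl_1 < \tm_0 < \ts_4 = \tl_2 = \tm_1$ (this is precisely the content of the case $k=2$, $s=0$, $\a<k$ there), so at $T=T_{2,1}(1)$ the value $\tl_2$ coincides with $\tm_1$ and the $\ts_4$-eigenspace is spanned by the four functions $x_2, y_2$ (the $\tl_2$-eigenfunctions) together with $z_1, w_1$ (the $\tm_1$-eigenfunctions). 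Since $\a=1$ forces $\tau_2 = \xi_1 = T/2 = T_{2,1}(1)/2$, these four functions are, up to constant multiples,
\begin{equation*}
\cosh\!\lf(2\lf(t-\tfrac{T_{2,1}(1)}{2}\ri)\ri)\cos(2\theta),\quad
\cosh\!\lf(2\lf(t-\tfrac{T_{2,1}(1)}{2}\ri)\ri)\sin(2\theta),\quad
\sinh\!\lf(t-\tfrac{T_{2,1}(1)}{2}\ri)\cos\theta,\quad
\sinh\!\lf(t-\tfrac{T_{2,1}(1)}{2}\ri)\sin\theta.
\end{equation*}

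With the eigenfunctions in hand, I would define the map $\Sigma = [0,T_{2,1}(1)]\times\sph^1 \to \R^4$ by shifting $t$ to $\hat t = t - T_{2,1}(1)/2$ and setting
\begin{equation*}
X(t,\theta) = \lf(2\sinh\hat t\cos\theta,\ 2\sinh\hat t\sin\theta,\ \cosh(2\hat t)\cos(2\theta),\ \cosh(2\hat t)\sin(2\theta)\ri)^T,
\end{equation*}
which is exactly the parametrization of the critical Möbius band recalled just before the statement, with $\hat t$ ranging over $[-T_{2,1}(1)/2, T_{2,1}(1)/2]$. Then I would check that each coordinate of $X$ is an eigenfunction of $L_g$ with eigenvalue $\sigma_4(g)$ — which is immediate from the list above, the first two components being multiples of $z_1, w_1$ and the last two multiples of $x_2, y_2$. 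As in \cite{FraserSchoen-2011, FraserSchoen-2013}, the fact that all coordinate functions share one Steklov eigenvalue $\sigma$ means $X$ is (after scaling) a free boundary minimal immersion into the unit ball: the coordinates are harmonic, hence $X$ is harmonic and the induced metric is conformal to $g$ in the appropriate balanced sense, while $\p X/\p n = \sigma X$ on $\p\Sigma$ gives the orthogonality of the boundary to $\partial B^4$. That the image is the Möbius band (rather than an annulus) comes from the antipodal identification $(\hat t,\theta)\mapsto(-\hat t,\theta+\pi)$ under which $X$ is invariant, which is why the problem is naturally phrased on the orientation double cover.

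The one genuinely new point compared with Theorem \ref{thm-odd-eigenvalue} — and the step I expect to require the most care — is the passage from "the coordinate functions are Steklov eigenfunctions" to "the image is, up to homothety, a free boundary minimal surface meeting $\p B^4$ orthogonally," together with the verification that the specific parametrization above is consistent: one must check that $|X|$ is constant on $\p\Sigma$ (so that the boundary really lands on a sphere) and that the conformal factor relating the induced metric to $g$ is compatible, i.e. that $g$ as constructed from $\a=1, T=T_{2,1}(1)$ indeed matches the induced metric of this parametrization on the nose. This is the analogue of the computation in \cite{FraserSchoen-2011} identifying the critical catenoid, and it reduces to the defining equation $2\tanh(\tfrac{s}{2}) = \coth(\tfrac{s}{2})$ for $T_{2,1}(1)$ (the $k=2$ case of $k\tanh(\tfrac{ks}{2}) = \coth(\tfrac{s}{2})$), which is exactly the orthogonality condition; everything else is the routine differentiation already packaged in Lemma \ref{lem-eigen}. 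I would therefore present the argument as: extremal metric $\Rightarrow$ explicit eigenfunctions $\Rightarrow$ explicit immersion $\Rightarrow$ recognize it as the critical Möbius band via the orthogonality equation, citing \cite{FraserSchoen-2013} for the minimal-surface interpretation.
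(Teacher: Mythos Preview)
Your proposal is correct and follows exactly the approach the paper intends: the paper's own proof is the single sentence ``similarly as in Theorem \ref{thm-odd-eigenvalue} with suitable eigen-functions,'' and you have written out precisely what that sentence encodes. One minor point: your asserted ordering $\tl_1<\tm_0$ at $T=T_{2,1}(1)$ is not actually needed (nor established by Lemma \ref{lem-sigma-even}); what matters for identifying the $\ts_4$-eigenspace is only that $\tl_1<\tl_2$ and $\tm_0<\tm_1$, both of which hold unconditionally by Lemma \ref{lem-eigen}(i), so $\{\ts_1,\ts_2,\ts_3\}=\{\tl_1,\tl_1,\tm_0\}$ and $\ts_4=\tl_2=\tm_1$ regardless of the relative position of $\tl_1$ and $\tm_0$.
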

For the other Steklov eigenvalues, similarly as in Theorem \ref{thm-odd-eigenvalue} by using suitable eigen-functions to embed or immerse the cylinder into $\R^4$, we have the following conclusions.
\begin{thm}\label{thm-4n-eigenvalue}
The maximum of the normalized $4n^{th}$ Steklov eigenvalue among all rotational symmetric metrics on the cylinder is achieved by the critical $n$-Mobius band immersed in $\R^4$ with boundary meets the boundary of the ball orthogonally. The immersed critical $n$-Mobius band is parameterized by
\begin{equation*}
X(t,\theta)=(2n\sinh t\cos\theta,2n\sinh t\sin\theta,\cosh(2nt)\cos(2n\theta),\cosh(2nt)\sin(2n\theta))^T
\end{equation*}
with $t\in [-T_{2n,1}(1)/2,T_{2n,1}(1)/2]$.
\end{thm}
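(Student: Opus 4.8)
The plan is to repeat the scheme used for Theorem~\ref{thm-odd-eigenvalue} and Theorem~\ref{thm-4-eigenvalue}, this time starting from Theorem~\ref{thm-even-upperbound}(ii). First I would apply that theorem with $k=2n$: among all rotationally symmetric metrics \eqref{metric-1} one has $M_{4n}=\tl_{2n}(1,T_{2n,1}(1))$, the supremum is attained, and it is attained only when $\a=1$ and $T=T_{2n,1}(1)$. So it suffices to describe, for this single extremal metric, the eigenspace of $\ts_{4n}$ and the map into $\R^4$ built from it.

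Next I would pin down the eigenspace. At $\a=1$ and $T=T_{2n,1}(1)$ one has $\tl_{2n}(1,T)=\tm_1(1,T)$ by the very definition of $T_{2n,1}$. By Lemma~\ref{lem-eigen}(i), $\tl_1<\tl_2<\cdots$, $\tm_0<\tm_1<\tm_2<\cdots$, and $\tl_j<\tm_j$ for $j\ge1$; hence the eigenvalues strictly below $\tl_{2n}(1,T)$ are exactly $0$, the numbers $\tl_1,\dots,\tl_{2n-1}$ (each of multiplicity two) and $\tm_0$ (multiplicity one), whereas $\tm_j>\tm_1=\tl_{2n}$ for $j\ge2$ and $\tl_j>\tl_{2n}$ for $j>2n$. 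These account for $\sigma_0,\dots,\sigma_{4n-1}$, so $\ts_{4n}(1,T)=\tl_{2n}(1,T)=\tm_1(1,T)$ and its eigenspace is the four-dimensional span of $x_{2n},y_{2n},z_1,w_1$. Since $\a=1$ forces $\tau_{2n}=\xi_1=T/2$, reparameterizing $\Sigma$ by $t\mapsto t+T/2$ (so that $t$ now runs over $[-T_{2n,1}(1)/2,\,T_{2n,1}(1)/2]$) turns this basis into $\cosh(2nt)\cos(2n\theta)$, $\cosh(2nt)\sin(2n\theta)$, $\sinh t\cos\theta$, $\sinh t\sin\theta$.

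I would then use these eigenfunctions, rescaling the last two by $2n$, to define
\begin{equation*}
X(t,\theta)=\lf(2n\sinh t\cos\theta,\ 2n\sinh t\sin\theta,\ \cosh(2nt)\cos(2n\theta),\ \cosh(2nt)\sin(2n\theta)\ri)^T.
\end{equation*}
The constant $2n$ is forced by conformality: a direct computation gives $\la X_t,X_\theta\ra\equiv0$ and $|X_t|^2-|X_\theta|^2=4n^2(\cosh^2t-\sinh^2t)-4n^2(\cosh^2(2nt)-\sinh^2(2nt))=0$, so $X$ is conformal with nowhere-vanishing conformal factor, hence an immersion, and, each of its coordinates being harmonic for $dt^2+d\theta^2$ (hence for $g$), $X$ is a conformal harmonic, i.e.\ minimal, immersion. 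Because each coordinate of $X$ is a Steklov eigenfunction with the common eigenvalue $\sigma_{4n}(g)$, we have $\partial X/\partial n=\sigma_{4n}(g)\,X$ on $\partial\Sigma$, while on $t=\pm T/2$ one has $|X|^2=4n^2\sinh^2(T/2)+\cosh^2(nT)$, independent of $\theta$; thus $X(\partial\Sigma)$ lies on a single round sphere, and $\partial X/\partial n$, being radial, is normal to it. After rescaling to the unit sphere this identifies $X$ as a free boundary minimal immersion into $B^4$ meeting $\partial B^4$ orthogonally, which is the critical $n$-M\"obius band, exactly as in \cite{FraserSchoen-2011,FraserSchoen-2013}. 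The orthogonality is the relation $\coth(T/2)=2n\tanh(nT)$, i.e.\ the equation $k\tanh(ks/2)=\coth(s/2)$ with $k=2n$ defining $T_{2n,1}(1)$, so it holds automatically. Finally $X$ is invariant under $(t,\theta)\mapsto(-t,\theta+\pi)$, so it descends to the M\"obius band and its image is the immersed $n$-M\"obius band with the stated parameterization; by Theorem~\ref{thm-even-upperbound}(ii) this is attained only for $\a=1$ and $T=T_{2n,1}(1)$.

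The one genuinely delicate step is the eigenspace bookkeeping of the second paragraph: verifying that at $(\a,T)=(1,T_{2n,1}(1))$ the eigenvalue realizing $M_{4n}$ is precisely the coincident value $\tl_{2n}=\tm_1$, with four-dimensional eigenspace spanned by those four explicit functions. Everything after that — conformality, harmonicity, and the free boundary condition — is a routine computation, entirely parallel to Theorem~\ref{thm-odd-eigenvalue} and to the corresponding arguments in \cite{FraserSchoen-2011,FraserSchoen-2013}.
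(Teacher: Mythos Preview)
Your proposal is correct and follows exactly the approach the paper indicates: the paper itself gives no detailed proof for this theorem, only remarking that one proceeds ``similarly as in Theorem~\ref{thm-odd-eigenvalue} by using suitable eigen-functions,'' and your argument carries out precisely that scheme with the details (eigenspace identification via Theorem~\ref{thm-even-upperbound}(ii), conformality and minimality of $X$, the free boundary condition, and the $(t,\theta)\mapsto(-t,\theta+\pi)$ invariance) filled in correctly.
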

\begin{thm}\label{thm-4n+2-eigenvalue}
The maximum of the normalized $4n+2^{th}$ Steklov eigenvalue with $n\geq 1$ among all rotational symmetric metrics on the cylinder is achieved by an embedded minimal surface in $\R^4$ with boundary meets the boundary of the ball orthogonally. The embedded minimal surface is parameterized by
\begin{equation*}
\begin{split}
&X(t,\theta)\\
=&((2n+1)\sinh t\cos\theta,(2n+1)\sinh t\sin\theta,\\
&\cosh((2n+1)t)\cos((2n+1)\theta),\cosh((2n+1)t)\sin((2n+1)\theta))^T
\end{split}
\end{equation*}
with $t\in [-T_{2n+1,1}(1)/2,T_{2n+1,1}(1)/2]$.
\end{thm}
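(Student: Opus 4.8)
The plan is to follow the pattern of the proof of Theorem~\ref{thm-odd-eigenvalue}: pin down the extremal metric together with its $(4n+2)$-th eigenspace, use the eigenfunctions to realize the cylinder as a free boundary minimal surface in a ball, and then check that this realization is an embedding.

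\textbf{Step 1: the extremal metric and its eigenspace.} Apply Theorem~\ref{thm-even-upperbound}(ii) with $k=2n+1$: the value $M_{4n+2}=\tl_{2n+1}(1,T_{2n+1,1}(1))$ is attained only when $\a=1$ and $T=T_{2n+1,1}(1)$. Since $T_{2n+1,1}(1)$ is by definition the $T$ at which $\tl_{2n+1}$ and $\tm_1$ coincide, at this metric $\ts_{4n+2}=\tl_{2n+1}=\tm_1$, and (as $\tau_{2n+1}=\xi_1=T/2$ when $\a=1$) its eigenspace is spanned by the four functions $\cosh\big((2n+1)(t-\tfrac{T}{2})\big)\cos((2n+1)\theta)$, $\cosh\big((2n+1)(t-\tfrac{T}{2})\big)\sin((2n+1)\theta)$, $\sinh(t-\tfrac{T}{2})\cos\theta$ and $\sinh(t-\tfrac{T}{2})\sin\theta$. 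As in the $4n$ case one should first record that there is no accidental further coincidence of eigenvalues, so that these four functions span exactly the eigenspace of $\ts_{4n+2}$.

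\textbf{Step 2: the minimal immersion.} Recenter via $t\mapsto t+\tfrac{T}{2}$ and set $X=(a\sinh t\cos\theta,\,a\sinh t\sin\theta,\,b\cosh((2n+1)t)\cos((2n+1)\theta),\,b\cosh((2n+1)t)\sin((2n+1)\theta))^T$ for constants $a,b>0$. Each coordinate is harmonic for the flat metric $dt^2+d\theta^2$, hence, harmonicity being conformally invariant in dimension two, for $g$ as well. A direct computation gives $X_t\cdot X_\theta\equiv 0$ and $|X_t|^2-|X_\theta|^2=a^2-(2n+1)^2b^2$, so choosing $a=(2n+1)b$ makes $X$ conformal; normalizing $b=1$ yields precisely the parametrization in the statement. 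Since then $|X_t|^2=(2n+1)^2\big(\cosh^2 t+\sinh^2((2n+1)t)\big)>0$ everywhere, $X$ is an unbranched conformal harmonic immersion of $[-T_{2n+1,1}(1)/2,\,T_{2n+1,1}(1)/2]\times\sph^1$, hence a minimal immersion.

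\textbf{Step 3: free boundary in a ball.} One computes $|X|^2=(2n+1)^2\sinh^2 t+\cosh^2((2n+1)t)$, which is even in $t$; hence $|X|^2$ is constant on each boundary circle $t=\pm T/2$ and takes the same value on both, so after a homothety $X(\partial\Sigma)\subset\sph^3=\partial B^4$. Next, $X\cdot X_\theta=\tfrac12\partial_\theta|X|^2=0$, while the outward conormal of $\partial\Sigma$ points along $\pm X_t$ (because $X_t\perp X_\theta$); and on $t=\pm T/2$ one has $X_t=\lambda X$ for a scalar $\lambda$ exactly when $\coth(T/2)=(2n+1)\tanh\big((2n+1)T/2\big)$, which is precisely the equation defining $T_{2n+1,1}(1)$. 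Thus along $\partial\Sigma$ the conormal is parallel to the position vector, i.e.\ $X$ meets $\partial B^4$ orthogonally and is a free boundary minimal surface. Equivalently this is the Fraser--Schoen correspondence: a conformal harmonic map into $\R^N$ whose coordinate functions are $\sigma$-Steklov eigenfunctions and whose norm is constant on $\partial\Sigma$ is, after scaling, a free boundary minimal immersion into $B^N$; here $\sigma=\ts_{4n+2}/L_g(\partial\Sigma)$.

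\textbf{Step 4: embeddedness, and the main obstacle.} It remains to prove $X$ injective; as the domain is compact and $X$ an immersion, this then upgrades to an embedding. If $X(t_1,\theta_1)=X(t_2,\theta_2)$, the first two coordinates force $|\sinh t_1|=|\sinh t_2|$, so $t_2=\pm t_1$; the $+$ case forces $(\cos\theta_1,\sin\theta_1)=(\cos\theta_2,\sin\theta_2)$ and hence $\theta_1=\theta_2$, while the $-$ case with $t_1\neq 0$ is excluded by comparing the last two coordinates and using that $2n+1$ is odd, so that $(2n+1)\pi\equiv\pi\pmod{2\pi}$ produces a sign conflict (this oddness is exactly what distinguishes the present case from the $4n$ case, where the same comparison is consistent and the image is a M\"obius band). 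The step I expect to be the real obstacle is the waist circle $t=0$: there the first two coordinates vanish identically and $X$ reduces to $(0,0,\cos((2n+1)\theta),\sin((2n+1)\theta))^T$, so the above injectivity argument degenerates and one must analyze the immersion near this circle with care — verifying injectivity there, and thus the word \emph{embedded} rather than merely \emph{immersed}, is the crux of the proof.
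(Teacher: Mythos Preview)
Your Steps 1--3 are correct and carry out in full detail what the paper only indicates by ``similarly as in Theorem~\ref{thm-odd-eigenvalue} by using suitable eigen-functions''.  The identification of the extremal metric via Theorem~\ref{thm-even-upperbound}, the eigenspace, the conformality check $|X_t|^2=|X_\theta|^2$, $X_t\cdot X_\theta=0$ with $a=(2n+1)b$, and the free boundary condition coming from the defining equation of $T_{2n+1,1}(1)$ are all handled properly.

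The difficulty you flag in Step~4 is not a crux to be overcome but a genuine failure of the claim.  On the waist circle $t=0$ the map reduces to $\theta\mapsto\big(0,0,\cos((2n+1)\theta),\sin((2n+1)\theta)\big)$, which is $(2n+1)$-to-one; hence $X$ is \emph{not} injective for $n\ge 1$.  Nor is the image an embedded surface: at the common image point $X(0,\theta_0)=X\big(0,\theta_0+\tfrac{2\pi j}{2n+1}\big)$ the sheet through $\theta_0+\tfrac{2\pi j}{2n+1}$ has $X_t$-direction $\big(\cos(\theta_0+\tfrac{2\pi j}{2n+1}),\sin(\theta_0+\tfrac{2\pi j}{2n+1}),0,0\big)$, and since $2n+1$ is odd no two of these are parallel, so the $2n+1$ tangent planes are pairwise distinct.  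Thus the word ``embedded'' in the statement appears to be a slip in the paper; the correct conclusion, parallel to Theorem~\ref{thm-4n-eigenvalue}, is that the maximizing metric is realized by this \emph{immersed} free boundary minimal annulus in $\R^4$.  With that correction your argument is complete.
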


\section{  Steklov eigenvalues of non-rotational symmetric metrics}
In this section, we will compare the Steklov eigenvalues of
a general conformal flat metric
\begin{equation}\label{metric-2}
\tilde g=f^2(t,\theta)(dt^2+d\theta^2)
\end{equation}
with the rotationally symmetric metrics as in \eqref{metric-1} on $\Sigma=[0,T]\times \sph^1$.
It is clear that the normalized Steklov eigenvalues depend only on $f_0(\theta)=f(0,\theta)$ and $f_1(\theta)=f(T,\theta)$.
So, we write them as $\tilde \sigma_k(f_0,f_1,T)$ for $k=0,1,2,\cdots$.
Let $\Gamma_0=\{0\}\times\sph^1$ and $\Gamma_1=\{T\}\times\sph^1$. Without loss of generality, we can assume that
$$\alpha:=\frac{L(\Gamma_0)}{L(\Gamma_1)}=\frac{\int_0^{2\pi} f_0(\theta)d\theta}{\int_0^{2\pi}f_1(\theta)d\theta}\geq 1.$$
Let $\beta=\frac{4\alpha}{(1+\alpha)^2}$ as before.

We want to compare $\ts_k(f_0,f_1,T)$ and $\ts_k(\beta,T)$. In general, it is not true that $\ts_k(f_0,f_1,T)\le \ts_k(\beta,T)$. A counter example will be given in next section. In this section, we prove that for a large class of $f$, $\ts_k(f_0,f_1,T)\le \ts_k(\beta,T)$ is true.
\begin{thm}\label{thm-eigen-comp-1}
When $T\geq T_{1,0}(\alpha)$, then
\begin{equation}
\tilde\sigma_1(f_0,f_1,T)\leq \tilde\sigma_1(\beta,T)=\tilde\mu_0(\beta,T)=\frac{8\pi}{T\beta}
\end{equation}
with equality holds if and only if $f_0$ and $f_1$ are constant functions.
\end{thm}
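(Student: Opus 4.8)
The plan is to use the variational (min–max) characterization of $\sigma_1$ together with a clever choice of test functions that exploit the hypothesis $T\ge T_{1,0}(\alpha)$. Recall that
\[
\sigma_1(\tilde g)=\inf\left\{\frac{\int_\Sigma |\nabla u|_{\tilde g}^2\, dV_{\tilde g}}{\int_{\partial\Sigma} u^2\, ds_{\tilde g}}\ :\ \int_{\partial\Sigma} u\, ds_{\tilde g}=0,\ u\not\equiv 0\right\},
\]
and since the Dirichlet energy $\int_\Sigma|\nabla u|^2\,dV$ of a surface is conformally invariant, the only place the conformal factor $f$ enters is through the boundary integrals, i.e. through the measures $f_0(\theta)\,d\theta$ on $\Gamma_0$ and $f_1(\theta)\,d\theta$ on $\Gamma_1$. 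So I would work on the fixed flat cylinder $[0,T]\times\sph^1$ with flat Dirichlet energy but with these two weighted boundary measures, and I would use harmonic test functions of the form $u=z_0=t-\mathrm{const}$ and $u=x_1,y_1$ (the building blocks from Section 3, the eigenfunctions producing $\tilde\mu_0$ and $\tilde\lambda_1$ on the symmetric model), possibly after adjusting the constant so the boundary-average condition holds against the weighted measure.

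The key step is this: because $\sigma_1$ is a min over all boundary functions with weighted mean zero, to get an upper bound it suffices to find a two-dimensional (or, after handling the mean-zero constraint, a suitable) subspace of test functions on which the Rayleigh quotient is controlled by $\tilde\sigma_1(\beta,T)/L(\partial\Sigma)$. I would take the span of $1$, $t$, $\cosh(t-c)\cos\theta$, $\cosh(t-c)\sin\theta$ for an appropriate $c$; since this is $4$-dimensional, after imposing one linear constraint (weighted mean zero on $\partial\Sigma$) there remains at least a $3$-dimensional space of admissible competitors, hence some competitor whose Rayleigh quotient is at most the largest of the relevant eigenvalues of a small matrix. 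The point of the hypothesis $T\ge T_{1,0}(\alpha)$ is precisely Lemma \ref{lem-intersection-1}/Lemma \ref{lem-eigen}: for $T\ge T_{1,0}(\alpha)$ one has $\tilde\mu_0(\beta,T)\le\tilde\lambda_1(\beta,T)$, so $\tilde\sigma_1(\beta,T)=\tilde\mu_0(\beta,T)=\tfrac{8\pi}{T\beta}$, and the "rotationally symmetric" competitor $z_0=t-\tfrac{T}{1+\alpha}$ (adapted so its weighted boundary average vanishes) is the dominant one; the other competitors only help.

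Concretely I would first reduce to showing $\sigma_1(f_0,f_1,T)L(\partial\Sigma)\le \tfrac{8\pi}{T\beta}$, then bound $L(\partial\Sigma)=\int f_0+\int f_1$ and estimate $\int_{\partial\Sigma} u^2$ from below using a convexity/Jensen argument against the measures $f_0\,d\theta$, $f_1\,d\theta$, with the linear function $u=t-c$ where $c$ is chosen so $c=T\cdot\frac{\int f_1}{\int f_0+\int f_1}$ makes the weighted mean zero; the flat Dirichlet energy of $t-c$ is just $2\pi T$, independent of $f$. Comparing with the symmetric case, where the same computation gives exactly $\tilde\mu_0(\beta,T)$, yields the inequality, and equality in the underlying Jensen/Cauchy–Schwarz steps forces $f_0$ and $f_1$ to be constant. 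For the general competitor built from $\cosh(t-c)\cos\theta$ etc., one must check that when the linear combination is forced to be nontrivial its Rayleigh quotient does not exceed $\tfrac{8\pi}{T\beta}$ either; this is where $T\ge T_{1,0}(\alpha)$ is used a second time, since for such $T$ the oscillatory modes have Rayleigh quotient close to $\tilde\lambda_1(\beta,T)\ge\tilde\mu_0(\beta,T)$ on the symmetric model but the extra freedom in $f$ could push it up — the main obstacle is controlling these $\theta$-dependent competitors against a non-symmetric weight, and I expect to handle it by a separate Fourier-mode estimate on $\partial\Sigma$, discarding the higher Fourier modes of $f_0,f_1$ and using only $\int f_0,\int f_1$, again via Cauchy–Schwarz. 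The equality analysis then follows from tracing the equality cases in all the inequalities used.
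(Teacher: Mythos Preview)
Your core idea --- use the linear function $z_0(t)=t-c$ with $c=T\int f_1/(\int f_0+\int f_1)$ as a single test function --- is exactly the paper's proof, and by itself already gives the inequality. However, you have overcomplicated the argument in one place and you have a genuine gap in the equality analysis.

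First, no Jensen or Cauchy--Schwarz is needed anywhere. Since $z_0$ is \emph{constant on each boundary component}, one has
\[
\int_{\partial\Sigma} z_0\, ds_{\tilde g}=z_0(0)\!\int_0^{2\pi}\! f_0 + z_0(T)\!\int_0^{2\pi}\! f_1,\qquad
\int_{\partial\Sigma} z_0^2\, ds_{\tilde g}=z_0(0)^2\!\int_0^{2\pi}\! f_0 + z_0(T)^2\!\int_0^{2\pi}\! f_1,
\]
and these depend only on $\int f_0$ and $\int f_1$, hence are \emph{exactly equal} to the corresponding integrals for the symmetric metric $g$. Together with conformal invariance of the Dirichlet energy, the Rayleigh quotient of $z_0$ for $\tilde g$ equals $\sigma_1(g)=\mu_0(\beta,T)$ on the nose --- there is no inequality to track here. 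Consequently the whole discussion of a four-dimensional span involving $\cosh(t-c)\cos\theta$, $\cosh(t-c)\sin\theta$ and the ``main obstacle'' of $\theta$-dependent competitors is unnecessary: a single mean-zero test function already bounds $\sigma_1$ from above, and the hypothesis $T\ge T_{1,0}(\alpha)$ is used exactly once, to ensure $\tilde\sigma_1(\beta,T)=\tilde\mu_0(\beta,T)$.

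Second, and this is the real gap, your equality analysis does not work. You claim equality would follow from ``equality in the underlying Jensen/Cauchy--Schwarz steps,'' but as just noted those steps are identities for \emph{every} choice of $f_0,f_1$ with the prescribed boundary lengths; they cannot detect whether $f_0,f_1$ are constant. What equality $\tilde\sigma_1(f_0,f_1,T)=\tilde\mu_0(\beta,T)$ actually implies is that $z_0$ attains the infimum in the variational characterization, hence is a first Steklov eigenfunction for $\tilde g$. The Steklov boundary condition on $\Gamma_0$ then reads
\[
-\frac{1}{f_0(\theta)}\, z_0'(0)=\frac{\partial z_0}{\partial\nu}=\sigma_1(\tilde g)\, z_0(0),
\]
which forces $f_0(\theta)=-z_0'(0)/(\sigma_1(\tilde g)\, z_0(0))$ to be constant; similarly for $f_1$. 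This is the paper's argument for the equality case, and it is the step you are missing.
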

\begin{proof}
Let $g$ be the metric $f(t)^2(dt^2+d\theta^2)$ with $t\in [0,T]$,
$$2\pi f(0)=\int_0^{2\pi}f_0(\theta)d\theta$$
and
$$2\pi f(T)=\int_0^{2\pi}f_1(\theta)d\theta.$$
Note that the linear function $z_0(t)$ above is a first eigenfunction of $g$. So
\begin{equation}
\begin{split}
0=&\int_{\partial \Sigma}z_0dS_{g_0}\\
=&2\pi f(0)z_0(0)+2\pi f(T)z_0(T)\\
=&\int_{\Gamma_0}z_0(0)f_0(\theta)d\theta+\int_{\Gamma_1}z_0(T)f_1(\theta)d\theta\\
=&\int_{\partial\Sigma}z_0dS_{g}.
\end{split}
\end{equation}
Moreover, since $g$ and $\tilde g$ are conformal,
\begin{equation}
\begin{split}
&\int_\Sigma \|\nabla^{\tilde g}z_0\|^2dV_{\tilde g}\\
=&\int_\Sigma\|\nabla^{ g}z_0\|^2dV_{g}\\
=&\sigma_1(g)\int_{\partial\Sigma}z_0^2dS_{g}\\
=&\sigma_1(g)\int_{\partial\Sigma}z_0^2dS_{g}\\
=&\sigma_1(g)\int_{\partial\Sigma} z_0^2dS_{\tilde g}.\\
\end{split}
\end{equation}
This give us the inequality.

When equality holds, we know that $w_0$ is a first eigenfunction of $\tilde g$. Then
\begin{equation}
-\frac{1}{f_0(\theta)}z_0'(0)=\frac{\partial z_0}{\partial\nu}=\sigma_1z_0(0).
\end{equation}
Hence $f_0(\theta)=-\frac{z'_0(0)}{\sigma_1z_0(0)}$ is a constant. Similarly, this is true for $f_1$.
\end{proof}
\begin{rem}
There is a similar result in \cite{FraserSchoen-2011}.
\end{rem}
When $T<T_{1,0}(\alpha)$, we have the following partial comparison of the first Steklov eigenvalues.
\begin{thm}\label{thm-eigen-comp-2}
When $T<T_{1,0}(\alpha)$, suppose that the Fourier series of $f_0$ and $f_1$ are
\begin{equation}
f_0(\theta)=c(\alpha+\alpha_1\cos\theta+\alpha_2\sin\theta+\alpha_3\cos(2\theta)+\alpha_4\sin(2\theta)+\cdots)
\end{equation}
and
\begin{equation}
f_1(\theta)=c(1+\beta_1\cos\theta+\beta_2\sin\theta+\beta_3\cos(2\theta)+\beta_4\sin(2\theta)+\cdots).
\end{equation}
Let
\begin{equation}
A=\left(\begin{array}{ccc}-2(\alpha+1)&-(a\alpha_1+b\beta_1)&-(a\alpha_2+b\beta_2)\\
-(a\alpha_1+b\beta_1)&-\frac{1}{2}(a^2\alpha_3+b^2\beta_3)&-\frac{1}{2}(a^2\alpha_4+b^2\beta_4)\\
-(a\alpha_2+b\beta_2)&-\frac{1}{2}(a^2\alpha_4+b^2\beta_4)&\frac{1}{2}(a^2\alpha_3+b^2\beta_3)
\end{array}\right)
\end{equation}
where $a=\cosh(\tau_n)$ and $b=\cosh(T-\tau_n)$. Then, if $A$ has two nonpositive eigenvalues,
\begin{equation*}
\tilde\sigma_1(f_0,f_1,T)\leq \tilde\sigma_1(\beta,T)=\tilde\lambda_1(\beta,T),
\end{equation*}
with equality holds if and only if $f_0$ and $f_1$ are both constant functions.
\end{thm}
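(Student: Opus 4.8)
The plan is to feed the Steklov eigenfunctions $x_1,y_1$ of the rotationally symmetric model metric, together with constants, into the Courant--Fischer min--max characterization of $\sigma_1(\tilde g)$. As in the proof of Theorem \ref{thm-eigen-comp-1}, introduce the rotationally symmetric metric $g=f(t)^2(dt^2+d\theta^2)$ with $2\pi f(0)=\int_0^{2\pi}f_0(\theta)\,d\theta$ and $2\pi f(T)=\int_0^{2\pi}f_1(\theta)\,d\theta$, so that $f(0)=c\alpha$, $f(T)=c$, the two boundary lengths are in the prescribed ratio $\alpha$, and $L_{\tilde g}(\p\Sigma)=L_g(\p\Sigma)$. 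Since $T<T_{1,0}(\alpha)$ forces $\tilde\lambda_1(\beta,T)<\tilde\mu_0(\beta,T)$, we have $\tilde\sigma_1(\beta,T)=\tilde\lambda_1(\beta,T)$, so it suffices to prove $\sigma_1(\tilde g)\le\sigma_1(g)=:\lambda_1$.

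Fix $u=c_0+c_1x_1+c_2y_1$ with $\mathbf c=(c_0,c_1,c_2)\in\R^3$. Since $x_1,y_1$ and constants are harmonic for the flat metric $g_0=dt^2+d\theta^2$, and since in dimension two harmonicity and the Dirichlet integral are conformally invariant, $u$ is its own $\tilde g$-harmonic extension and
\[
\int_\Sigma|\nabla^{\tilde g}u|^2\,dV_{\tilde g}=\int_\Sigma|\nabla^{g_0}u|^2\,dV_{g_0}=(c_1^2+c_2^2)E,
\]
where $E=\int_\Sigma|\nabla^{g_0}x_1|^2$; the cross term and the difference $\int|\nabla x_1|^2-\int|\nabla y_1|^2$ vanish upon integrating $\cos\theta\sin\theta$ and $\cos 2\theta$ over $\sph^1$. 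Because $x_1$ is a Steklov eigenfunction of $g$ with eigenvalue $\lambda_1$, and $x_1(0,\theta)=a\cos\theta$, $x_1(T,\theta)=b\cos\theta$, one gets $E=\lambda_1\int_{\p\Sigma}x_1^2\,dS_g=\lambda_1\pi c(\alpha a^2+b^2)$. On the boundary, expanding $u(0,\theta)=c_0+a(c_1\cos\theta+c_2\sin\theta)$ and $u(T,\theta)=c_0+b(c_1\cos\theta+c_2\sin\theta)$ and using the Fourier expansions of $f_0,f_1$ with the orthogonality of $1,\cos\theta,\sin\theta,\cos 2\theta,\sin 2\theta$, a direct computation gives $\frac1{\pi c}\int_{\p\Sigma}u^2\,dS_{\tilde g}=\mathbf c^TQ\mathbf c$ for a symmetric positive definite $Q$, and checking entries one finds exactly
\[
Q-\mathrm{diag}(0,\alpha a^2+b^2,\alpha a^2+b^2)=-A.
\]
Hence the Rayleigh quotient equals $R_{\tilde g}(u)=\lambda_1(\alpha a^2+b^2)(c_1^2+c_2^2)\big/(\mathbf c^TQ\mathbf c)$, and $R_{\tilde g}(u)\le\lambda_1$ if and only if $\mathbf c^T(-A)\mathbf c\ge 0$.

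If $A$ has two nonpositive eigenvalues, then $-A$ is positive semidefinite on the $2$-dimensional subspace $W\subset\R^3$ spanned by the two corresponding orthonormal eigenvectors; since $1,x_1,y_1$ are linearly independent, $V=\{c_0+c_1x_1+c_2y_1:\mathbf c\in W\}$ is a $2$-dimensional function space on which $R_{\tilde g}\le\lambda_1$. By Courant--Fischer, $\sigma_1(\tilde g)\le\max_{u\in V}R_{\tilde g}(u)\le\lambda_1=\sigma_1(g)$, i.e.\ $\tilde\sigma_1(f_0,f_1,T)\le\tilde\sigma_1(\beta,T)$. For the equality case: if equality holds then $R_{\tilde g}\le\sigma_1(\tilde g)$ on $V$ while $R_{\tilde g}\ge\sigma_1(\tilde g)$ on the mean-zero hyperplane $\{\int_{\p\Sigma}u\,dS_{\tilde g}=0\}$; the intersection of $V$ with this hyperplane is at least one-dimensional, so it contains a nonzero $u^*=c_0+c_1x_1+c_2y_1$, which is therefore a genuine first Steklov eigenfunction of $\tilde g$. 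Writing the eigenvalue equation $\p u^*/\p\nu_{\tilde g}=\sigma_1(\tilde g)u^*$ on $\Gamma_0$, namely $\sinh(\tau_1)(c_1\cos\theta+c_2\sin\theta)=\sigma_1(\tilde g)f_0(\theta)\,u^*(0,\theta)$, and noting that $u^*$ is non-constant so $(c_1,c_2)\ne 0$, positivity of $f_0$ at a point where $c_1\cos\theta+c_2\sin\theta=0$ forces $c_0=0$, hence $f_0\equiv\sinh(\tau_1)/(a\sigma_1(\tilde g))$ is constant; the same argument on $\Gamma_1$ gives $f_1$ constant. The converse is clear, since for constant $f_0,f_1$ the metric $\tilde g$ has the same boundary data as the model metric with ratio $\alpha$.

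The routine part is the boundary computation producing $Q$ and the entrywise check that $Q-\mathrm{diag}(0,\alpha a^2+b^2,\alpha a^2+b^2)=-A$; the point I expect to need the most care is the equality discussion, specifically verifying that the trial space $V$ meets the mean-zero hyperplane in an honest first eigenfunction (rather than merely a maximizer of the restricted quadratic pencil), which is what actually forces $f_0$ and $f_1$ to be constant.
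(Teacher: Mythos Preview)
Your proposal is correct and follows essentially the same approach as the paper. The paper defines the bilinear form $Q(u,v)=\langle L_g u,v\rangle_g-\lambda_1\langle u,v\rangle_{\tilde g}$ on $\mathrm{span}\{x_0,x_1,y_1\}$, observes that its matrix is $\lambda_1\pi A$, and then uses a dimension count (the two-dimensional nonpositive eigenspace of $A$ meets the hyperplane $\{\langle u,x_0\rangle_{\tilde g}=0\}$ in at least a line) to produce a single mean-zero test function with $Q(u,u)\le 0$; you unpack the same computation as a Rayleigh-quotient inequality and invoke Courant--Fischer on the two-dimensional trial space, which is exactly the same argument in min--max language, and your equality discussion is a more explicit version of what the paper abbreviates by referring back to Theorem~\ref{thm-eigen-comp-1}.
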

\begin{proof}
Without loss of generality, we can assume that the Fourier series of $f_0$ and $f_1$ are
\begin{equation}
f_0(\theta)=\alpha+\alpha_1\cos\theta+\alpha_2\sin\theta+\alpha_3\cos(2\theta)+\alpha_4\sin(2\theta)+\cdots
\end{equation}
and
\begin{equation}
f_1(\theta)=1+\beta_1\cos\theta+\beta_2\sin\theta+\beta_3\cos(2\theta)+\beta_4\sin(2\theta)+\cdots.
\end{equation}
Let $g=f^2(t)(dt^2+d\theta^2)$ with $f(0)=\alpha$ and $f(T)=1$ and
\begin{equation*}
Q(u,v)=\vv<L_{\tilde g}u,v>_{\tilde g}-\lambda_1\vv<u,v>_{\tilde g}=\vv<L_{ g}u,v>_{g}-\lambda_1\vv<u,v>_{\tilde g}
\end{equation*}
where
$$\vv<u,v>_g=\int_{\partial\Sigma}uvdS_g.$$
This is a quadratic form on the function space of $\partial\Sigma$.
By direct computation, the matrix of $Q$ on $\mbox{span}\{x_0,x_1,y_1\}$ with respect to the basis $x_0,x_1,y_1$ is $\lambda_1\pi A$. Because $A$ has two nonpositive eigenvalues, there is a nonzero function $u\in \mbox{span}\{x_0,x_1,y_1\}$, such that $\vv<u,x_0>_{\tilde g}=0$ and $Q(u,u)\leq 0$. Hence,
\begin{equation}
\sigma_1(f_0,f_1,T)\leq \lambda_1(\beta,T)=\sigma_1(\beta,T),
\end{equation}
and
\begin{equation}
\tilde \sigma_1(f_0,f_1,T)\leq \lambda_1(\beta,T)=\tilde\sigma_1(\beta,T).
\end{equation}
When equality holds, the same argument as in Theorem \ref{thm-eigen-comp-1} give us the conclusion.
\end{proof}
For other Steklov eigenvalues, we have the following partial comparison of eigenvalues.
\begin{thm}
Let $$E_{2k}=\mbox{span}\{\cos\theta,\sin\theta,\cos(2\theta),\sin(2\theta),\cdots,\cos(2k\theta),\sin(2k\theta)\}.$$
If $f_0,f_1\in E_{2k}^{\bot}$, then
\begin{equation}
\tilde\sigma_{2k-1}(f_0,f_1,T)\leq \tilde\sigma_{2k-1}(\beta,T)
\end{equation}
and
\begin{equation}
\tilde\sigma_{2k}(f_0,f_1,T)\leq \tilde\sigma_{2k}(\beta,T)
\end{equation}
with equality holds if and only if $f_0$ and $f_1$ are constants. Here $E_{2k}^{\bot}$ means the orthogonal complement of $E_{2k}$ in $L^2([0,2\pi])$.
\end{thm}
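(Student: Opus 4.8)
The plan is to run the Courant--Fischer min-max characterization of the Steklov spectrum of $\tg$ against the explicit eigenfunctions of a rotationally symmetric comparison metric, exploiting that in dimension two the Dirichlet energy is conformally invariant. Write $R_h$ for the Steklov Rayleigh quotient of a metric $h$ (the Dirichlet energy divided by the boundary $L^2$-norm squared). After a harmless rescaling I may assume $\frac1{2\pi}\int_0^{2\pi}f_1\,d\theta=1$, so that $\frac1{2\pi}\int_0^{2\pi}f_0\,d\theta=\alpha$. Let $g=f(t)^2(dt^2+d\theta^2)$ be the rotationally symmetric metric with $f(0)=\alpha$ and $f(T)=1$; then $\ts_j(g)=\ts_j(\beta,T)$ for every $j$, and $L_g(\p\Sigma)=2\pi(1+\alpha)=L_{\tg}(\p\Sigma)$, so it suffices to prove $\sigma_j(\tg)\le\sigma_j(g)$ for $j=2k-1$ and $j=2k$.

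The first thing I would establish is a frequency count for $g$. Listing the Steklov eigenvalues of $g$ in nondecreasing order, all of $\sigma_1(g),\dots,\sigma_{2k}(g)$ have angular frequency at most $k$: indeed $\tl_0=0$ together with $\tl_1<\dots<\tl_k$, each of multiplicity two by Lemma~\ref{lem-eigen}(i), already accounts for $2k+1$ eigenvalues $\le\tl_k$, so $\sigma_{2k}(g)\le\tl_k(\beta,T)$; on the other hand any Steklov eigenvalue of $g$ of angular frequency $n\ge1$ equals $\tl_n$ or $\tm_n$ and is therefore $\ge\tl_n$ by Lemma~\ref{lem-eigen}(i), so if it occurs among the first $2k$ eigenvalues then $\tl_n\le\sigma_{2k}(g)\le\tl_k$, forcing $n\le k$. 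Accordingly, pick $g$-Steklov eigenfunctions $\phi_0=1,\phi_1,\phi_2,\dots$ (for $\sigma_0(g)\le\sigma_1(g)\le\cdots$) from the explicit list $z_0,x_n,y_n,z_n,w_n$; then for every $i\le2k$ the restriction of $\phi_i$ to $\Gamma_0$, and to $\Gamma_1$, is a constant multiple of a single mode $\cos(n_i\theta)$ or $\sin(n_i\theta)$ with $n_i\le k$ (or a constant, in the case of $z_0$).

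Next I would compare the two quadratic forms on $V=\mbox{span}\{\phi_0,\dots,\phi_{2k-1}\}$ and on $V'=\mbox{span}\{\phi_0,\dots,\phi_{2k}\}$. For $u\in V'$ the Dirichlet energies agree, $\int_\Sigma|\n^{\tg}u|^2\,dV_{\tg}=\int_\Sigma|\n^{g}u|^2\,dV_g$, by conformal invariance. For the boundary term, $u|_{\Gamma_0}^2$ is a trigonometric polynomial in $\theta$ of degree $\le2k$; since $f_0\in E_{2k}^\perp$, its non-constant part is $L^2([0,2\pi])$-orthogonal to $f_0$, so $\int_{\Gamma_0}u^2 f_0\,d\theta$ equals $\alpha$ times the mean of $u|_{\Gamma_0}^2$, which is precisely $\int_{\Gamma_0}u^2\,dS_g$; the same holds on $\Gamma_1$ using $f_1\in E_{2k}^\perp$ and $f(T)=1$. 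Hence $\int_{\p\Sigma}u^2\,dS_{\tg}=\int_{\p\Sigma}u^2\,dS_g$ for all $u\in V'$, so $R_{\tg}$ and $R_g$ coincide on $V'$, and the min-max characterization gives
\[
\sigma_{2k-1}(\tg)\le\max_{0\ne u\in V}R_{\tg}(u)=\max_{0\ne u\in V}R_g(u)=\sigma_{2k-1}(g),
\]
and, using $V'$ in place of $V$, likewise $\sigma_{2k}(\tg)\le\sigma_{2k}(g)$. Multiplying by the common boundary length $2\pi(1+\alpha)$ yields the two asserted inequalities.

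If $f_0$ and $f_1$ are constant then $\tg$ has the same boundary data as a rotationally symmetric metric, so equality holds. For the converse I would argue, as in the proof of Theorem~\ref{thm-eigen-comp-1}, that when equality holds the function $u^\ast\in V$ realizing $\max_{V}R_{\tg}$ is an actual Steklov eigenfunction of $\tg$; its trace on $\Gamma_0$ then has angular frequency $\le k$, and the Steklov boundary condition $\p u^\ast/\p\nu=\sigma f_0\,u^\ast$ makes $f_0\,u^\ast|_{\Gamma_0}$ a trigonometric polynomial of degree $\le k$. Since $f_0$ equals $\alpha$ plus a part supported on frequencies $\ge2k+1$ and $u^\ast|_{\Gamma_0}$ is a nonzero trigonometric polynomial, this forces the high-frequency part of $f_0$ to vanish, i.e.\ $f_0\equiv\alpha$, and symmetrically $f_1\equiv1$. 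The step I expect to be the main obstacle is exactly the justification that $u^\ast$ is a genuine $\tg$-Steklov eigenfunction: because $\sigma_{2k-1}$ (and $\sigma_{2k}$) is a min-max value rather than a minimum, the maximizer over the test space need not automatically solve the full Steklov problem, so one must argue more carefully here, for instance via a perturbation analysis showing that the second-order correction couples the extremal eigenfunction only to strictly higher, high-frequency eigenvalues and is therefore strictly negative whenever $f_0$ or $f_1$ is nonconstant. A minor point to watch is the frequency count itself, since the order in which the $\tl_n$ and $\tm_n$ appear depends on $T$ and $\beta$; the uniform bound $\sigma_{2k}(g)\le\tl_k(\beta,T)$ is what makes the argument go through in all cases.
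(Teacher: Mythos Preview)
Your argument for the inequality is essentially the paper's: both use the first $2k$ (resp.\ $2k+1$) $g$-eigenfunctions as a test space, observe via the hypothesis $f_0,f_1\in E_{2k}^\perp$ that the boundary $L^2$-form for $\tg$ agrees with that for $g$ on this space, and combine with conformal invariance of the Dirichlet energy. The paper phrases the last step by picking $u\in\mbox{span}\{\phi_0,\dots,\phi_{2k-1}\}$ orthogonal (in $\tg$) to the first $2k-1$ $\tg$-eigenfunctions $\psi_0,\dots,\psi_{2k-2}$, rather than invoking min--max directly as you do; these are equivalent.

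The one point worth noting concerns the equality case, where your stated ``main obstacle'' evaporates if you adopt the paper's formulation. With $u$ chosen $\tg$-orthogonal to $\psi_0,\dots,\psi_{2k-2}$, equality $R_{\tg}(u)=\sigma_{2k-1}(\tg)$ forces $u$ to lie in the $\sigma_{2k-1}(\tg)$-eigenspace by the standard spectral-decomposition argument; no perturbation analysis is needed. Moreover, equality in $\sum_i\sigma_i(g)c_i^2\|\phi_i\|_g^2\le\sigma_{2k-1}(g)\sum_ic_i^2\|\phi_i\|_g^2$ also makes $u$ a $g$-eigenfunction for $\sigma_{2k-1}(g)$. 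Comparing the two boundary conditions $-f(0)^{-1}\p_tu=\sigma_{2k-1}(g)\,u$ and $-f_0(\theta)^{-1}\p_tu=\sigma_{2k-1}(\tg)\,u$ on $\Gamma_0$ then yields $f_0\equiv f(0)$ wherever $u(0,\cdot)\ne0$; since $u(0,\cdot)$ is a trigonometric polynomial (so vanishes only at isolated points), $f_0$ is constant, and similarly for $f_1$. Your frequency argument to the same end is fine (modulo writing $\p_t u^\ast$ rather than $\p u^\ast/\p\nu$), and is in fact more detailed than what the paper writes, which just refers back to the argument of Theorem~\ref{thm-eigen-comp-1}. One residual subtlety neither version fully addresses is the possibility $u|_{\Gamma_0}\equiv0$ (or $u|_{\Gamma_1}\equiv0$); handling it would require using the $\sigma_{2k}$ equality as well, or a short separate argument.
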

\begin{proof}
Let $g$ be the metric $f(t)^2(dt^2+d\theta^2)$ with $t\in [0,T]$,
$$2\pi f(0)=\int_0^{2\pi}f_0(\theta)d\theta$$
and
$$2\pi f(T)=\int_0^{2\pi}f_1(\theta)d\theta.$$
Let $\phi_0,\phi_1,\cdots,\phi_{2k}$ be the first $2k+1$ eigen-functions of $g$. Then, by the computation in the first section and the eigen-functions listed in the second section, we know that
\begin{equation}
\phi_i(0,\theta),\phi_i(T,\theta)\in \mbox{span}\{1,\cos\theta,\sin\theta,\cos(2\theta),\sin(2\theta),\cdots,\cos(k\theta),\sin(k\theta)\}
\end{equation}
for $i=0,1,2,\cdots,2k$.
Hence
\begin{equation}\label{eqn-i-j}
\vv<\phi_i,\phi_j>_{g}=\vv<\phi_i,\phi_j>_{\tilde g}
\end{equation}
for all $i,j=0,1,2,\cdots,2k$ since $f_0,f_1\in E_{2k}^\bot$.

Let $\psi_0,\psi_1,\cdots,\psi_{2k-1}$ be the first $2k$ eigen-functions of $\tilde g$. It is clear that there are constants $c_0,c_1,\cdots,c_{2k-2}$ that are not all zero, such that $u=c_0\phi_0+\cdots+c_{2k-2}\phi_{2k-1}$ is orthogonal to $\psi_0,\psi_1,\cdots,\psi_{2k-2}$ with respect to $\tilde g$. Moreover,
\begin{equation}
\begin{split}
&\vv<L_{\tilde g}u,u>_{\tilde g}\\
=&\vv<L_{g}u,u>_{g}\\
=&\vv<\sum_{i=0}^{2k-1}\sigma_i(g)c_i\phi_i,\sum_{i=0}^{2k-1}c_i\phi_i>_{g}\\
=&\sum_{i=0}^{2k-1}\sigma_i(g)c_i^2\vv<\phi_i,\phi_i>_{g}\\
\leq&\sigma_{2k-1}(g)\sum_{i=0}^{2k-1}c_i^2\vv<\phi_i,\phi_i>_{g}\\
=&\sigma_{2k-1}(g)\sum_{i=0}^{2k-1}c_i^2\vv<\phi_i,\phi_i>_{\tilde g}\\
=&\sigma_{2k-1}(g)\vv<u,u>_{\tilde g}\\
\end{split}
\end{equation}
by \eqref{eqn-i-j}. Hence,
$$\sigma_{2k-1}(f_0,f_1,T)\leq \sigma_{2k-1}(g)=\sigma_{2k-1}(\beta,T).$$
For the equality case, the argument is similar as in Theorem \ref{thm-eigen-comp-1}.

Similarly, we have $$\sigma_{2k}(f_0,f_1,T)\leq \sigma_{2k}(g)=\sigma_{2k}(\beta,T)$$
with equality holds if and only if $f_0$ and $f_1$ are constants.
\end{proof}
\section{A counter example}
Let $\Sigma=[0,T]\times \sph^1$ with conformal metric $\mathbf{g}=f^2(t,\theta)(dt^2+d\theta^2)$. Consider the rotational symmetric metric  $\mathbf{g}_0=h(t)^2(dt^2+d\theta^2)$ such that

$$
h(0)=\frac1{2\pi}\int_0^{2\pi}f(0,\theta)d\theta, \ h(T)=\frac1{2\pi}\int_0^{2\pi}f(T,\theta)d\theta.
$$
In this section, we want to construct an example, such that $\sigma_1(\mathbf{g})> \sigma_1(\mathbf{g}_0)$. Hence $\ts_1(\mathbf{g})> \ts_1(\mathbf{g}_0)$.

Let $\mbg =(1+\frac{1}{2}\cos\theta+\frac{1}{8}\cos(2\theta))(dt^2+d\theta^2)$ and $\mbg_0=dt^2+d\theta^2$ on the cylinder $\Sigma=[0,T]\times \sph^1$. Note that
$L_{\mbg}$ and $L_{\mbg_0}$ are operators on $C^\infty(\partial \Sigma)=C^\infty(\sph^1)\times C^\infty(\sph^1)$. Then, by the eigenvalues and eigenfunctions listed in the second section, we know that all the normalized eigenvalues and eigen-spaces of $L_{\mbg_0}$ are as follows:
\begin{enumerate}
\item $\tilde \lambda_0=0$ with eigen-space generated by $\mathbf x_0=(1,1)$;
\item $\tilde\lambda_n=4n\pi\tanh(\frac{nT}{2})$ with eigen-space generated by $\mathbf x_n=(\cos(n\theta),\cos(n\theta))$ and $\mathbf y_n=(\sin(n\theta),\sin(n\theta))$ for $n=1,2,\cdots$;
\item $\tilde \mu_0=\frac{8\pi}{T}$ with eigen-space generated by $\mathbf z_0=(1,-1)$;
\item  $\tilde \mu_n=4n\pi\coth(\frac{nT}{2})$ with eigen-space generated by $\mathbf z_n=(\cos(n\theta),-\cos(n\theta))$ and $\mathbf w_n=(\sin(n\theta),-\sin(n\theta))$ for $n=1,2,\cdots$.
\end{enumerate}

\begin{thm}\label{thm-example}
When $T<T_{1,0}(1)$ is small enough, then
 $\sigma_1(f,f,T)>\lambda_1(1,T)=\sigma_1(1,T)$.
\end{thm}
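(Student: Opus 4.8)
The plan is to pass to the thin–cylinder limit $T\to 0^+$ and reduce the desired inequality to a one–dimensional weighted eigenvalue comparison on $\sph^1$.

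First I would record the variational setup. Since $\mathbf g=f^2(dt^2+d\theta^2)$ is conformal to $\mathbf g_0=dt^2+d\theta^2$ on $\Sigma$, the Dirichlet integral $D(u)=\int_\Sigma|\nabla u|^2\,dt\,d\theta$ agrees for the two metrics, while the boundary length element of $\mathbf g$ is $f\,d\theta$; hence $\sigma_1(f,f,T)$ is the infimum of $D(u)\big/\int_0^{2\pi}f\,(u(0,\cdot)^2+u(T,\cdot)^2)\,d\theta$ over harmonic $u$ with $\int_0^{2\pi}f\,(u(0,\cdot)+u(T,\cdot))\,d\theta=0$, while $\sigma_1(1,T)=\tanh(T/2)$. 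Writing $u(0,\cdot)=v+w$, $u(T,\cdot)=v-w$ and expanding harmonic extensions in Fourier series gives $D(u)=E_v+E_w$ with $E_v=\sum_{n\ge1}2\pi n\tanh(nT/2)\,\|(\text{mode }n\text{ of }v)\|^2$ and $E_w\ge \tfrac4T\|w\|_{L^2(\sph^1)}^2$ (because $x\coth x\ge 1$ forces $n\coth(nT/2)\ge 2/T$), the denominator being $2\int fv^2+2\int fw^2$ and the constraint $\int fv=0$. Since the $w$–part of the numerator is of order $T^{-1}$ while the $w$–part of the denominator is of order $T$, and since a frequency–$n$ mode of $v$ contributes energy of order $n$ to $E_v$ once $nT\gtrsim 1$, the near–extremizers are, for small $T$, essentially symmetric ($w\approx 0$) and of bounded frequency, where $2\pi n\tanh(nT/2)=\pi Tn^2\bigl(1+O((nT)^2)\bigr)$ yields $E_v=T\int(v')^2\bigl(1+o(1)\bigr)$. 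Thus I would obtain $\sigma_1(f,f,T)\ge \tfrac T2\mu_1(f)\bigl(1-o(1)\bigr)$ as $T\to0^+$, where $\mu_1(f)$, the first nonzero eigenvalue of $-v''=\mu f v$ on $\sph^1$, equals $\inf\{\int(v')^2\,d\theta\big/\int fv^2\,d\theta:\int fv\,d\theta=0\}$. As $\sigma_1(1,T)=\tanh(T/2)\le T/2$, it then suffices to prove $\mu_1(f)>1$.

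To prove $\mu_1(f)>1$ I would write $f=1+\epsilon$ with $\epsilon=\tfrac12\cos\theta+\tfrac18\cos2\theta$ (so $\int\epsilon=0$), and for $v$ with $\int fv=0$ decompose $v=\bar v+v_\perp$, $\bar v$ the mean. The constraint forces $\bar v=-\tfrac1{2\pi}\int\epsilon v_\perp$, and a short computation yields the exact identity
\[
\int(v')^2-\int fv^2=\Bigl(\int(v_\perp')^2-\int v_\perp^2\Bigr)+2\pi\bar v^2-\int\epsilon v_\perp^2 .
\]
Expanding $v_\perp=\sum_{k\ge1}(a_k\cos k\theta+b_k\sin k\theta)$, the right side equals $\pi(Q_a+Q_b)$ where, for instance,
\[
Q_b=\tfrac1{16}b_1^2+\sum_{k\ge2}(k^2-1)b_k^2-\tfrac12\sum_{k\ge1}b_kb_{k+1}-\tfrac18\sum_{k\ge1}b_kb_{k+2},
\]
and $Q_a$ is analogous but carries in addition the helpful terms arising from $2\pi\bar v^2=\tfrac1{2\pi}(\int\epsilon v_\perp)^2$, which in particular raise the $a_1$–diagonal to $+\tfrac1{16}a_1^2$. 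The key points are: the $\tfrac18\cos2\theta$ coefficient of $f$ is precisely what produces the positive diagonal entry $\tfrac1{16}b_1^2$ in $Q_b$ (it comes from the coefficient $-\tfrac12 b_1^2$ of $\cos2\theta$ in $v_\perp^2$), and the constraint term $2\pi\bar v^2$ is what repairs the otherwise negative $a_1^2$–entry; together with $k^2-1\ge 3$ for $k\ge2$ these make $Q_a$ and $Q_b$ positive definite. I would verify this by computing the first few leading principal minors of the top–left blocks and handling the tail by a diagonal–dominance estimate, obtaining in fact $\int(v')^2\ge(1+c_0)\int fv^2$ for an explicit $c_0>0$ and all non-constant admissible $v$, so $\mu_1(f)\ge 1+c_0>1$. (Dropping the $\tfrac18\cos2\theta$ term makes the $b_1$–diagonal of $Q_b$ vanish and $Q_b$ indefinite — indeed $\mu_1(1+\tfrac12\cos\theta)<1$ — so this term, and equally the use of the constraint, are both essential.)

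Combining the two steps, for $T$ small enough
\[
\sigma_1(f,f,T)\ \ge\ \tfrac T2\mu_1(f)\bigl(1-o(1)\bigr)\ \ge\ \tfrac T2\bigl(1+\tfrac{c_0}2\bigr)\ >\ \tfrac T2\ >\ \tanh(T/2)=\sigma_1(1,T),
\]
which is the assertion. The hard part will be the quantitative form of the first step: one must control, uniformly in the Fourier frequency $n$, the crossover between $nT$ bounded (where $2\pi n\tanh(nT/2)\simeq\pi Tn^2$) and $nT$ large (where $E_v$ is of order $n$ but $f$, being a trigonometric polynomial, couples Fourier modes differing by at most two), and show the accumulated error is $o(T)$ so that it cannot absorb the fixed gap $c_0$ coming from $\mu_1(f)>1$. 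The positive definiteness of $Q_a,Q_b$ is elementary but, involving infinitely many modes, also needs some care.
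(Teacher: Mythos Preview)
Your plan is correct in outline and is very close to the paper's own argument: both reduce to the symmetric case (your $w=0$; the paper's $\mathbf u$ of type $\mathbf x_0$) after observing that the antisymmetric branch has Rayleigh quotient $\gtrsim \mu_0(1,T)\to\infty$, and both then prove a quadratic-form inequality in the Fourier coefficients of $v$, using the constraint $\int f v=0$ in exactly the way you describe (your $2\pi\bar v^2$ term is the paper's use of $a_0=-\tfrac14 a_1-\tfrac1{16}a_2$). Your $Q_a,Q_b$ computation is essentially the paper's estimates of the quantities it calls $I$ and $II$.

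The one genuine difference is in how the ``hard part'' you flag is handled. You pass to the limit $T\to 0$ first, reduce to $\mu_1(f)>1$, and then must justify the asymptotic lower bound $\sigma_1(f,f,T)\ge \tfrac{T}{2}\mu_1(f)(1-o(1))$ \emph{uniformly} over all admissible $v$; since $\tanh(nT/2)\le nT/2$ with a large deficit when $nT\gtrsim 1$, this requires a separate compactness or frequency-splitting argument that you only sketch. The paper sidesteps this entirely by never taking the limit: it works directly with the finite-$T$ ratios $r_n=\lambda_n(1,T)/\lambda_1(1,T)$ and uses the elementary two-sided bound $n\le r_n\le n^2$ with $r_n\to n^2$ (their Lemma~5.2). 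The lower bound $r_n\ge n$ gives $r_n\ge 9-\epsilon$ for all $n\ge 3$ once $T$ is small, which already dominates the tail coefficient $\|f\|_\infty=\tfrac{13}{8}$ appearing in $\int f v^2$; only the modes $n=1,2$ need the sharper $r_n\approx n^2$. Thus the paper replaces your two-step ``limit then repair the error'' with a single direct inequality at finite $T$. Your approach buys a cleaner conceptual statement ($\mu_1(f)>1$, with the illuminating remark on why the $\tfrac18\cos 2\theta$ term is essential), while the paper's buys a shorter, self-contained proof with no asymptotic analysis.
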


 Let $\mathbf{u}=(u(0),u(T))$ be an eigenfunction of $\sigma_1(\mathbf{g})$. Then
 $$L_{\mathbf{g}}(\bu)=\sigma_1(\mbg) \bu, $$
   and

 \begin{equation}
\begin{split}
\mathbf{u}=&\sum_{n=0}^\infty\lf( a_n\cos(n\theta)+b_n\sin(n\theta)\ri) \mathbf{x_0}+\sum_{n=0}^\infty\lf( c_n\cos(n\theta)+d_n\sin(n\theta)\ri) \mathbf{z_0}
\end{split}
\end{equation}
where $\bx_0=(1,1)$ and $\mathbf{z_0}=(1,-1)$.

Since $\mathbf{v}=(u(T),u(0))$ is also an eigenfunction for $\sigma_1$. By considering $\mathbf{u}+\mathbf{v}$ and  $\mathbf{u}-\mathbf{v}$, we may assume that either $u(0)=u(T)$ or $u(0)=-u(T)$. Hence we may assume either,
\be\label{eq-even}
\mathbf{u}= \sum_{n=0}^\infty\lf( a_n\cos(n\theta)+b_n\sin(n\theta)\ri) \mathbf{x_0},
\ee
or
\be\label{eq-odd}
\mathbf{u}=\sum_{n=0}^\infty\lf( c_n\cos(n\theta)+d_n\sin(n\theta)\ri) \mathbf{z_0}.
\ee
\begin{lem}\label{lem-1}

$$\lim_{T\to\infty}\frac{\lambda_k(1,T)}{\lambda_l(1,T)}=\frac{k^2}{l^2}.$$
Moreover, for $k\ge l$,
$$
\frac{k}l\le \frac{\lambda_k(1,T)}{\lambda_l(1,T)}\le \frac{k^2}{l^2}.
$$
\end{lem}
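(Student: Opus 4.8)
The plan is to work directly with the explicit formula $\tilde\lambda_n(1,T)=4n\pi\tanh(nT/2)$ from Lemma \ref{lem-eigen}(v), so that
\[
\frac{\tilde\lambda_k(1,T)}{\tilde\lambda_l(1,T)}=\frac{k}{l}\cdot\frac{\tanh(kT/2)}{\tanh(lT/2)}.
\]
For the limit as $T\to\infty$, note $\tanh(nT/2)\to1$ for every fixed $n\ge1$, so the ratio converges to $k/l$; this is where I suspect the statement in the excerpt (which asks for $k^2/l^2$) is actually a typo and the intended limit is $k/l$, consistent with the bracketing bounds $\tfrac kl\le \tilde\lambda_k/\tilde\lambda_l\le \tfrac{k^2}{l^2}$ immediately below. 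In any case, the limit step is just a one-line observation once the formula is in hand.

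For the two-sided bound with $k\ge l\ge1$, it suffices to show
\[
1\le \frac{\tanh(kT/2)}{\tanh(lT/2)}\le \frac{k}{l}.
\]
The left inequality is immediate since $\tanh$ is increasing and $kT/2\ge lT/2$. For the right inequality, first I would reduce to the case $l=1$ by writing $\tanh(kT/2)/\tanh(lT/2)$ and setting $s=lT/2$, $m=k/l\ge1$; but since $m$ need not be an integer it is cleaner to prove the general statement that $\phi(x):=\tanh(x)/x$ is decreasing on $(0,\infty)$, which gives $\tanh(kT/2)/(kT/2)\le \tanh(lT/2)/(lT/2)$, i.e.\ exactly $\tanh(kT/2)/\tanh(lT/2)\le k/l$. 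To see $\phi$ is decreasing, compute $\phi'(x)=\big(x\,\mathrm{sech}^2x-\tanh x\big)/x^2$ and check that $g(x):=x\,\mathrm{sech}^2 x-\tanh x\le 0$ for $x>0$: indeed $g(0)=0$ and $g'(x)=\mathrm{sech}^2x-2x\,\mathrm{sech}^2x\tanh x-\mathrm{sech}^2x=-2x\,\mathrm{sech}^2x\tanh x\le0$, so $g\le0$ on $(0,\infty)$. This gives the upper bound; combining the two inequalities and multiplying by $k/l$ yields $\tfrac kl\le \tilde\lambda_k/\tilde\lambda_l\le \tfrac{k^2}{l^2}$, and then the monotone ratio squeezed between $k/l$ and $k^2/l^2$ forces the limit (if the intended limit is $k/l$, the lower bound alone plus $\tanh\to1$ suffices).

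The main obstacle, such as it is, is purely bookkeeping: making sure the elementary monotonicity claim $\tanh(x)/x$ decreasing is proved cleanly (via the sign of $x\,\mathrm{sech}^2x-\tanh x$, or equivalently via the power series $\tanh x = x - x^3/3 + \cdots$) rather than asserted, and reconciling the $k^2/l^2$ versus $k/l$ discrepancy in the limit — I would state and prove $\lim_{T\to\infty}\tilde\lambda_k(1,T)/\tilde\lambda_l(1,T)=k/l$ and flag the $k^2/l^2$ in the excerpt as a misprint, since the sandwiching bounds in the same lemma already pin the limit to $k/l$.
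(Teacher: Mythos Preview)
Your proof of the two-sided inequality is correct and is essentially the paper's argument: the paper phrases the key monotonicity as ``$f(t)=a\tanh(at)/\tanh(t)$ is decreasing on $t>0$ for $a\ge1$'' and then bounds the ratio above by $\lim_{t\to0^+}f(t)=a^2$, whereas you use the equivalent (and slightly cleaner) fact that $\tanh(x)/x$ is decreasing. Either way one gets $\tanh(kT/2)/\tanh(lT/2)\le k/l$ and hence the upper bound $k^2/l^2$; the lower bound via monotonicity of $\tanh$ is identical.

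One correction on the typo diagnosis. You are right that the displayed limit is false as written, but the misprint is in the limit point, not in the value: the paper's own proof computes
\[
\lim_{T\to 0^+}\frac{k\tanh(kT/2)}{l\tanh(lT/2)}=\frac{k^2}{l^2},
\]
and indeed the ratio \emph{decreases} from $k^2/l^2$ at $T=0^+$ down to $k/l$ at $T=\infty$. The subsequent application in Lemma~\ref{lma-even} (the estimate $\lambda_n(1,T)/\lambda_1(1,T)\ge n^2-\e$ for $T$ small) relies on exactly this $T\to0^+$ limit being $n^2$. So your proposed fix of replacing $k^2/l^2$ by $k/l$ would break that application; the correct fix is to replace $T\to\infty$ by $T\to 0^+$ and keep $k^2/l^2$.
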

\begin{proof}
Note that $\tilde\lambda_k(1,T)=4k\pi\tanh(kT/2)$, hence
\begin{equation}
\frac{\lambda_k(1,T)}{\lambda_l(1,T)}=\frac{k\tanh(kT/2)}{l\tanh(lT/2)}\geq \frac{k}{l}.
\end{equation}
Moreover, the function $f(t)=a\tanh(at)/\tanh(t)$ is decreasing on $t>0$ for $a\geq 1$. Hence
\begin{equation}
\frac{k\tanh(kT/2)}{l\tanh(lT/2)}\leq\lim_{T\to0^+}\frac{k\tanh(kT/2)}{l\tanh(lT/2)}=\frac{k^2}{l^2}.
\end{equation}
\end{proof}
\begin{lem}\label{lma-odd} If $T>0$ is small enough, and if $\bu$ is of the form \eqref{eq-even}, then $\sigma_1=\sigma_1(\mathbf{g})>\sigma_1(\mbg_0)$.
\end{lem}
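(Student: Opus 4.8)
The plan is to show that if $\bu$ has the form \eqref{eq-even}, i.e. $u(0)=u(T)=:\phi(\theta)$ for a function $\phi$ on $\sph^1$, then the Rayleigh quotient of $\bu$ with respect to $\mbg$ cannot be as small as $\lambda_1(1,T)$ unless $T$ is bounded below. The key point is that $\sigma_1(\mbg)$ is computed by minimizing $\int_\Sigma|\nabla^{\mbg}w|^2\,dV_{\mbg}=\int_\Sigma|\nabla w|^2\,dt\,d\theta$ (conformal invariance in dimension two) over harmonic extensions $w$ with $\int_{\partial\Sigma}w\,dS_{\mbg}=0$, against $\int_{\partial\Sigma}w^2\,dS_{\mbg}$. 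When the boundary trace is $(\phi,\phi)$ with the \emph{same} $\phi$ on both components, the harmonic extension that minimizes Dirichlet energy is essentially $\theta$-harmonic-per-Fourier-mode and, for the $n$-th mode $\cos(n\theta)$ or $\sin(n\theta)$, the optimal extension is $\cosh(n(t-T/2))/\cosh(nT/2)$ times the boundary value, contributing Dirichlet energy $2n\pi\tanh(nT/2)$ per unit $L^2(d\theta)$-mass on \emph{each} boundary circle. So for the flat symmetric metric one gets exactly the eigenvalue $\tilde\lambda_n(1,T)=4n\pi\tanh(nT/2)$; the perturbation of the metric only changes the boundary measure $dS_{\mbg}=f_0\,d\theta$ on $\Gamma_0$ and $f_1\,d\theta=f_0\,d\theta$ on $\Gamma_1$ (here $f_0=f_1=f(\theta)=1+\tfrac12\cos\theta+\tfrac18\cos2\theta$).

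The concrete estimate I would carry out: since $\int_{\partial\Sigma}\bu\,dS_{\mbg}=0$ forces $\int_0^{2\pi}\phi(\theta)f(\theta)\,d\theta=0$, the constant Fourier mode $a_0$ of $\phi$ is determined (and small) by the nonzero modes. The Dirichlet energy of the harmonic extension is $\sum_{n\ge1}2n\pi\tanh(nT/2)\bigl(a_n^2+b_n^2\bigr)\cdot 2$ — wait, more carefully, $\int_\Sigma|\nabla w|^2 = \sum_{n\ge1}2\pi n\tanh(nT/2)(a_n^2+b_n^2)$ after accounting for both boundary circles carrying the same trace — while the denominator is $\int_{\partial\Sigma}\bu^2\,dS_{\mbg} = 2\int_0^{2\pi}\phi^2 f\,d\theta$. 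As $T\to0$, $\tanh(nT/2)\sim nT/2$, so the numerator is $O(T)\cdot\sum n^2(a_n^2+b_n^2)$, whereas for a fixed non-constant $\phi$ the denominator stays bounded below away from $0$; thus the Rayleigh quotient of any fixed such $\bu$ tends to $0$ like $T$. That by itself is not enough — I need that the \emph{minimum} over admissible $\bu$ of this even type is strictly bigger than $\lambda_1(1,T)=4\pi\tanh(T/2)\sim 2\pi T$. So the real content is a lower bound: show
\begin{equation*}
\frac{\sum_{n\ge1}2\pi n\tanh(nT/2)(a_n^2+b_n^2)}{2\int_0^{2\pi}\phi^2 f\,d\theta}\ \ge\ (1+\delta)\,\lambda_1(1,T)
\end{equation*}
for all admissible $\phi$ and all small $T$, for some fixed $\delta>0$; equivalently, dividing by $T$ and letting $T\to0^+$, that the limiting quotient $\frac{\pi\sum_{n\ge1}n^2(a_n^2+b_n^2)}{2\int_0^{2\pi}\phi^2 f\,d\theta}$ exceeds $2\pi$, i.e. $\sum n^2(a_n^2+b_n^2) > 4\int_0^{2\pi}\phi^2 f\,d\theta$, under the single constraint $\int_0^{2\pi}\phi f\,d\theta=0$.

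The main obstacle is precisely this last eigenvalue inequality: I must compute (or bound) the smallest eigenvalue of the problem ``minimize $\sum_{n\ge1}n^2(a_n^2+b_n^2)$ subject to $\int\phi^2 f\,d\theta = 1$ and $\int\phi f\,d\theta=0$'' and check it is $>4$ for the specific $f=1+\tfrac12\cos\theta+\tfrac18\cos2\theta$. This is a finite-type perturbation: writing $H=-d^2/d\theta^2$ (acting on mean-value-constrained-against-$f$ functions) the problem is $H\phi = \Lambda f\phi$, and since $f$ couples each Fourier mode only to its neighbours at distance $1$ and $2$, one can estimate $\Lambda$ by a tridiagonal-plus analysis or simply by the variational bound $\int\phi^2 f\,d\theta \le \|f\|_\infty\int\phi^2\,d\theta \le \tfrac{13}{8}\int\phi^2$ together with $\sum n^2(\cdots)\ge\sum(\cdots)=\int\phi^2$ on the orthogonal complement of constants — which gives $\Lambda\ge 8/13$, not good enough, so the sharper estimate using that $\phi$ is forced to have substantial high-mode content by the constraint, plus the factor $n^2$ versus $n$ gap, is needed. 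The plan is: reduce to this spectral inequality, prove it by a direct Fourier computation exploiting that for any $\phi\perp_{f}\mathbf 1$ one has $\int\phi^2 f\le \int\phi^2 + \tfrac14\|\phi\|\,\|\phi\|_{(1)} + \tfrac18\|\phi\|\,\|\phi\|_{(2)}$ where $\|\phi\|_{(j)}^2=\sum(a_n a_{n+j}+b_n b_{n+j})$-type cross terms, bound these by $\tfrac12(\text{weighted sums})$, and conclude $\int\phi^2 f < \tfrac14\sum n^2(a_n^2+b_n^2)$; finally combine with Lemma~\ref{lem-1} and the $T\to0$ asymptotics to finish. (The companion case \eqref{eq-odd}, handled by the analogous lemma for $\bu$ of the form $(\psi,-\psi)$, will be treated separately and is where $\tilde\mu_0=8\pi/T$ enters; together the two lemmas establish Theorem~\ref{thm-example}.)
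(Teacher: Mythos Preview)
There is a labelling mix-up in the paper that muddles this comparison. The three-line proof printed right after Lemma~\ref{lma-odd} argues that $\la L_{\mbg_0}\bu,\bu\ra_{\mbg_0}\ge\mu_0(1,T)\la\bu,\bu\ra_{\mbg_0}$, combines this with $\la\bu,\bu\ra_{\mbg}\le\|f\|_\infty\la\bu,\bu\ra_{\mbg_0}$, and uses $\mu_0(1,T)\to\infty$, $\lambda_1(1,T)\to0$. That inequality is valid only when $\bu$ lies in the span of the $\mathbf z_n,\mathbf w_n$, i.e.\ is of the antisymmetric form \eqref{eq-odd}, since there the restricted spectrum of $L_{\mbg_0}$ is $\{\mu_n\}_{n\ge0}$ with minimum $\mu_0$. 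For $\bu$ of the symmetric form \eqref{eq-even} the restricted spectrum is $\{\lambda_n\}_{n\ge0}$, and the short $\mu_0$ argument simply fails. The paper's genuine treatment of the symmetric case is the long Fourier computation appearing under Lemma~\ref{lma-even} (note that proof begins by writing $\bu$ in the $\mathbf x_0$-basis). So the two lemma statements have their references to \eqref{eq-even} and \eqref{eq-odd} interchanged.

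Read against the paper's \emph{actual} argument for the symmetric case, your plan is essentially the same. Both write
\[
\frac{\sigma_1(\mbg)}{\lambda_1(1,T)}\cdot\frac{1}{2\pi}\la\bu,\bu\ra_{\mbg}=\sum_{n\ge1}\frac{\lambda_n(1,T)}{\lambda_1(1,T)}(a_n^2+b_n^2),
\]
use Lemma~\ref{lem-1} to replace $\lambda_n/\lambda_1$ by nearly $n^2$ for small $T$, and then bound $\frac1{2\pi}\la\bu,\bu\ra_{\mbg}=\frac1\pi\int_0^{2\pi}\phi^2 f\,d\theta$ from above mode by mode, eliminating $a_0$ via the orthogonality constraint $a_0+\tfrac14 a_1+\tfrac1{16}a_2=0$. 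Your crude bound $\int\phi^2 f\le\|f\|_\infty\int\phi^2$ is correctly flagged as insufficient; the paper fixes this exactly as you suggest, expanding the product with $\tfrac12\cos\theta+\tfrac18\cos2\theta$ and absorbing the cross terms $a_1a_2,a_1a_3,a_2a_3,a_2a_4$ (and the analogous $b$-terms) into the higher-weight pieces $4a_2^2,9a_3^2,\ldots$ by Cauchy--Schwarz. One correction: your limiting target should be $\pi\sum_{n\ge1}n^2(a_n^2+b_n^2)>\int_0^{2\pi}\phi^2 f\,d\theta$, not the version with a factor $4$ --- you conflated $\lambda_1$ with $\tilde\lambda_1$ in the normalization. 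Your closing remark that the antisymmetric case \eqref{eq-odd} is the easy one handled by the $\tm_0$ bound is exactly right, and matches what the paper (mis-labelled) does.
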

\begin{proof} If $T>0$ is small enough, then $\sigma_1(\mbg_0)=\lambda_1(1,T) $.   Then
\bee
\begin{split}
\sigma_1\la \bu,\bu\ra_{\mbg}=&  \la \bu,L_{\mbg}\bu\ra_{\mbg}\\
=&  \la \bu,L_{\mbg_0}\bu\ra_{\mbg_0}\\
\ge & \mu_0(1,T)\la\bu,\bu\ra_{\mbg_0}.
\end{split}
\eee
On the other hand,
\bee
\begin{split}
\lambda_1(1,T)\la \bu,\bu\ra_{\mbg}\le &||f||_{\infty}\lambda_1(1,T)\la \bu,\bu\ra_{\mbg_0}.
\end{split}
\eee
Now $\mu_0(1,T)\to\infty$ as $T\to0$ and $\lambda_1(1,T)\to0$ as $T\to0$. From these the lemma follows.
\end{proof}

\begin{lem}\label{lma-even} If $T>0$  is small enough, and if $\bu$ is of the form \eqref{eq-odd}, then $\sigma_1=\sigma_1(\mathbf{g})>\sigma_1(\mbg_0)$.
\end{lem}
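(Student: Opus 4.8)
The plan is to argue in the spirit of the proof of Lemma \ref{lma-odd}, the point being that for $\bu$ of the form \eqref{eq-odd} the orthogonality we need is automatic, and the relevant lower bound for the Rayleigh quotient of $\mbg_0$ is the genuinely large eigenvalue $\tm_0(1,T)=8\pi/T$. Write $\bu=\phi(\theta)\,\mathbf{z_0}=(\phi,-\phi)$, so that $u(0,\cdot)=-u(T,\cdot)$. The observation driving everything is that such a $\bu$ is $L^2(\mbg_0)$-orthogonal to the entire $\tl$-part of the Steklov spectrum of $\mbg_0$, hence lies in the closed span of the odd-type eigenfunctions, whose smallest eigenvalue is $\tm_0(1,T)$; since this tends to $+\infty$ as $T\to0^+$ while $\sigma_1(\mbg_0)=\tl_1(1,T)\to0$, the comparison must hold for $T$ small.

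The steps, in order, would be the following. (1)~For $\mbg_0=dt^2+d\theta^2$ one has $\alpha=\beta=1$, $\tau_n=\xi_n=T/2$, the induced boundary measure on both $\Gamma_0$ and $\Gamma_1$ is $d\theta$, and by the eigenfunction list of Section 2 each even-type eigenfunction $\mathbf{x_0}=(1,1)$, $\mathbf{x_n}=(\cos n\theta,\cos n\theta)$, $\mathbf{y_n}=(\sin n\theta,\sin n\theta)$ restricts to the same function on $\Gamma_0$ and on $\Gamma_1$. Hence the $\Gamma_0$- and $\Gamma_1$-contributions to $\vv<\bu,\mathbf{x_n}>_{\mbg_0}$ and $\vv<\bu,\mathbf{y_n}>_{\mbg_0}$ cancel, so $\bu$ is $\mbg_0$-orthogonal to every even-type eigenfunction and therefore lies in the closed span of $\mathbf{z_0},\mathbf{z_n},\mathbf{w_n}$ ($n\ge1$), whose Steklov eigenvalues are $\tm_0(1,T)$ and $\tm_n(1,T)$; by Lemma \ref{lem-eigen}(i) the least of these is $\tm_0(1,T)$, so, with $\mu_0(\mbg_0)$ the corresponding unnormalized eigenvalue, $\vv<L_{\mbg_0}\bu,\bu>_{\mbg_0}\ge\mu_0(\mbg_0)\vv<\bu,\bu>_{\mbg_0}$. (2)~As in the proof of Theorem \ref{thm-eigen-comp-1}, the Dirichlet energy is conformally invariant in real dimension two, so $\bu$ has a common harmonic extension for $\mbg$ and $\mbg_0$ and $\vv<L_{\mbg}\bu,\bu>_{\mbg}=\vv<L_{\mbg_0}\bu,\bu>_{\mbg_0}$. (3)~Since $dS_{\mbg}=f\,dS_{\mbg_0}$ with $f\le\|f\|_\infty$ on $\partial\Sigma$, one has $\vv<\bu,\bu>_{\mbg}\le\|f\|_\infty\vv<\bu,\bu>_{\mbg_0}$. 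Putting (1)--(3) together,
\[
\sigma_1=\sigma_1(\mbg)=\frac{\vv<L_{\mbg}\bu,\bu>_{\mbg}}{\vv<\bu,\bu>_{\mbg}}\ \ge\ \frac{\mu_0(\mbg_0)}{\|f\|_\infty}\ \longrightarrow\ +\infty\quad(T\to0^+),
\]
whereas $\sigma_1(\mbg_0)=\tl_1(1,T)=4\pi\tanh(T/2)\to0$ by Lemma \ref{lem-eigen}(v); hence $\sigma_1(\mbg)>\sigma_1(\mbg_0)$ for all sufficiently small $T$. Combined with Lemma \ref{lma-odd}, this completes the proof of Theorem \ref{thm-example}.

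I do not anticipate a real obstacle: this is the standard conformal-comparison trick, and the only care needed is bookkeeping. The orthogonality in step (1) must be carried out with respect to the boundary measure of $\mbg_0$ — this is precisely where the rotational symmetry of $\mbg_0$ together with $\alpha=1$, i.e.\ the equality of the induced measures on $\Gamma_0$ and $\Gamma_1$, enters — and one must keep the normalized Steklov eigenvalues $\tl_n,\tm_n$ of Section 2 separate from the unnormalized ones in the Rayleigh quotient; since $L_{\mbg_0}(\partial\Sigma)=4\pi$ is independent of $T$, this is merely an overall rescaling and does not affect the limiting comparison. The one genuine check is that for small $T$ one really has $\sigma_1(\mbg_0)=\tl_1(1,T)$ and not $\tm_0(1,T)$, which is immediate from $\tl_1(1,T)\to0$ and $\tm_0(1,T)\to\infty$.
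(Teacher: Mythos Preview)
Your argument is correct for $\bu$ of the form \eqref{eq-odd}, and it is essentially the same argument the paper gives --- but in its proof of Lemma~\ref{lma-odd}, not of Lemma~\ref{lma-even}. There is a typographical swap in the paper: the references \eqref{eq-even} and \eqref{eq-odd} in the two lemma statements are interchanged relative to the proofs that follow them. The proof printed beneath Lemma~\ref{lma-even} actually treats $\bu=\phi(\theta)\,\mathbf{x_0}$ (form \eqref{eq-even}), which is the substantive case: there $\bu$ lies in the span of the $\mathbf{x}_n,\mathbf{y}_n$, the relevant $\mbg_0$-eigenvalues are the $\lambda_n$, and your $\mu_0$-trick is unavailable since the $\mbg_0$-Rayleigh quotient is bounded below only by $\lambda_0=0$. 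The paper instead exploits the $\mbg$-orthogonality $\la\bu,1\ra_{\mbg}=0$ (yielding \eqref{eq-even-4}) together with the explicit Fourier coefficients of $f=1+\tfrac12\cos\theta+\tfrac18\cos 2\theta$ and the ratio estimate of Lemma~\ref{lem-1} to force $\la L_{\mbg_0}\bu,\bu\ra_{\mbg_0}>\lambda_1(1,T)\,\la\bu,\bu\ra_{\mbg}$ via the chain \eqref{eq-even-1}--\eqref{eq-even-5}. So you have proved the literal statement, but in doing so you have reproduced the short argument of Lemma~\ref{lma-odd}; the genuine content of the paper's Lemma~\ref{lma-even} proof is the even-type case, without which Theorem~\ref{thm-example} would not be complete.
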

\begin{proof}
$\bu=\bu_1+\bu_2$, where
$$
\bu_1=\sum_{n=0}^\infty a_n\cos(n\theta)\mathbf x_0,\ \bu_2=\sum_{n=1}^\infty b_n\sin(n\theta)\mathbf x_0.
$$
As in the proof of the previous lemma, for any $\e>0$, there is $T_0>0$ such that if $0<T<T_0$
\be\label{eq-even-1}
\begin{split}
\frac{\sigma_1}{2\pi\lambda_1(1,T)}\la \bu,\bu\ra_{\mbg}=&
     \sum_{n=1}^\infty \frac{\lambda_n(1,T)}{\lambda_1(1,T)}(a_n^2+b_n^2)\\ \ge& (a_1^2+b_1^2)+(4-\e)(a_2^2+b_2^2)+(9-\e)\sum_{n=3}^\infty b_n^2,
\end{split}
\ee
where we have used Lemma \ref{lem-1}. Now
\be\label{eq-even-2}
\begin{split}
\frac1{2\pi}\la \bu,\bu\ra_{\mbg}=&\frac1\pi\int_0^{2\pi}f(\theta)\lf(\sum_{n=0}^\infty a_n\cos(n\theta)+b_n\sin(n\theta)\ri)^2d\theta\\
=&\frac1\pi\int_0^{2\pi}f(\theta)\lf(\sum_{n=0}^\infty a_n\cos(n\theta)\ri)^2d\theta+\frac1\pi\int_0^{2\pi}f(\theta)\lf(\sum_{n=1}^\infty b_n\sin(n\theta)\ri)^2\\
=I+II
\end{split}
\ee
where we have used the fact that $f(\theta)\bu_1$ is a series in $\cos(n\theta)$.
Now
\be\label{eq-even-3}
\begin{split}
II=&\sum_{n=1}^\infty b_n^2+\frac{1}{ \pi}\int_0^{2\pi}\lf(\frac12\cos\theta+\frac18\cos(2\theta)\ri)\lf(\sum_{n=1}^\infty b_n\sin(n\theta)\ri)^2d\theta \\
\le&\lf(1+\frac12+\frac18\ri)\sum_{n=3}^\infty b_n^2\\
&+b_1^2+b_2^2+\frac{1}{ \pi}\int_0^{2\pi}\lf(\frac12\cos\theta+\frac18\cos(2\theta)\ri)
\lf(b_1\sin\theta+b_2\sin(2\theta)\ri)
\cdot\\
&\cdot\lf(b_1\sin\theta+b_2\sin(2\theta)+2\sum_{n=3}^\infty
 b_n\sin(n\theta)\ri) d\theta\\
 =&\frac{13}8\sum_{n=3}^\infty b_n^2+b_1^2+b_2^2\\
&+\frac{1}{ \pi}\int_0^{2\pi}\lf( \lf(\frac{b_2}4-\frac{b_1}{16}\ri)\sin\theta+\frac{b_1}4\sin(2\theta)+\lf(\frac{b_2}4+\frac{b_1}4\ri)\sin(3\theta)
+\frac{b_2}{16}\sin(4\theta)\ri)
\cdot\\
&\cdot\lf(b_1\sin\theta+b_2\sin(2\theta)+2\sum_{n=3}^\infty
 b_n\sin(n\theta)\ri) d\theta\\
 =&\frac{13}8\sum_{n=3}^\infty b_n^2+b_1^2+b_2^2+b_1\lf(\frac{b_2}4-\frac{b_1}{16}\ri)+\frac{b_1b_2}4+\frac{ b_3 (b_1+b_2)}2+\frac{b_2b_4}8\\
 \le&\frac{13}8\sum_{n=3}^\infty b_n^2+b_1^2+\lf(3+\frac 5{16}\ri)b_2^2+2b_2^2+2b_3^2
 +\frac94b_3^2+\frac1{16}b_4^2.
\end{split}
\ee
To estimate $I$, note that $\la\bu,1\ra_{\mbg}=0$. So
\be\label{eq-even-4}
a_0+\frac14a_1+\frac1{16}a_2=0.
\ee
\be\label{eq-even-5}
\begin{split}
I=&2a_0^2+\sum_{n=1}^\infty a_n^2+ \frac{1}{ \pi}\int_0^{2\pi}\lf(\frac12\cos\theta+\frac18\cos(2\theta)\ri)
\lf(\sum_{n=0}^\infty a_n\cos(n\theta)\ri)^2d\theta\\
\le&2a_0^2+a_1^2+a_2^2+\frac{13}8\sum_{n=3}^\infty a_n^2\\
& +\frac{1}{ \pi}\int_0^{2\pi}\lf(\frac12\cos\theta+\frac18\cos(2\theta)\ri)
\lf(a_0+a_1\cos\theta+a_2\cos(2\theta)\ri)
\cdot\\
&\cdot\lf(a_0+a_1\cos\theta+a_2\cos(2\theta)+2\sum_{n=3}^\infty
 a_n\cos(n\theta)\ri) d\theta\\
 =&2a_0^2+a_1^2+a_2^2+\frac{13}8\sum_{n=3}^\infty a_n^2\\
& +\frac{1}{ \pi}\int_0^{2\pi} \bigg( \frac14a_1+\frac1{16}a_2+\lf(\frac{a_2}4+\frac12a_0+\frac1{16}a_1\ri)\cos\theta+
\lf(\frac4a_1+\frac18a_0\ri)\cos(2\theta)\\&+\lf(\frac14a_2+\frac1{16}a_1\ri)\cos(3\theta) +\frac1{16}a_2\cos(4\theta)\bigg)\cdot
 \\
 &\cdot\lf(a_0+a_1\cos\theta+a_2\cos(2\theta)+2\sum_{n=3}^\infty
 a_n\cos(n\theta)\ri) d\theta\\
 =&2a_0^2+a_1^2+a_2^2+\frac{13}8\sum_{n=3}^\infty a_n^2+ a_0a_1+\frac14a_0a_2+\frac12a_1a_2+\frac1{16}a_1a_3+\frac1{16}a_2a_4+\frac14a_2a_3\\
 =&-2a_0^2+a_1^2+a_2^2+\frac{13}8\sum_{n=3}^\infty a_n^2+\frac12a_1a_2+\frac1{16}a_1a_3+\frac1{16}a_2a_4+\frac14a_2a_3\\
 \le&\frac{13}8\sum_{n=3}^\infty a_n^2+a_1^2+a_2^2-\frac18 a_1^2+\frac12 |a_1||a_2|+\frac1{16}a_1a_3+\frac1{16}a_2a_4+\frac14a_2a_3\\
  \le&\frac{13}8\sum_{n=3}^\infty a_n^2+a_1^2+a_2^2 -\frac18 a_1^2+\frac12 |a_1||a_2|\\
  &+7a_3^2+\frac{1}{7\times 32^2}a_1^2+7a_4^2+\frac{1}{7\times 32^2}a_2^2+\frac18a_2^2+\frac18a_3^2\\
  &\le a_1^2+\lf(1+\frac14\ri)a_2^2+\lf(7+\frac{14}8\ri)a_3^2+\lf(7+\frac{13}8\ri)a_4^2+\frac{13}8
  \sum_{n=5}^\infty a_n^2
\end{split}
\ee
where we have used \eqref{eq-even-4}
By \eqref{eq-even-1}--\eqref{eq-even-3} and \eqref{eq-even-5}, one can conclude that the lemma is true.
\end{proof}

\begin{proof}[Proof of Theorem \ref{thm-example}] The theorem follows from Lemmas \ref{lma-odd} and \ref{lma-even}.
\end{proof}


\begin{thebibliography}{10}
\bibitem{Calderon1980}Calder\'on, Alberto-P. {\sl On an inverse boundary value problem. Seminar on Numerical Analysis and its Applications to Continuum Physics (Rio de Janeiro, 1980)}, pp. 65--73, Soc. Brasil. Mat., Rio de Janeiro, 1980.
\bibitem{Cheng1975}Cheng, Shiu Yuen.{\sl Eigenvalue comparison theorems and its geometric applications.} Math. Z. 143 (1975), no. 3, 289--297.
\bibitem{CEG2001}Colbois, Bruno; El Soufi, Ahmad; Girouard, Alexandre. {\sl Isoperimetric control of the Steklov spectrum.} J. Funct. Anal. 261 (2011), no. 5, 1384--1399.
\bibitem{Escobar2000}Escobar, Jos\'e F. {\sl A comparison theorem for the first non-zero Steklov eigenvalue.} J. Funct. Anal. 178 (2000), no. 1, 143--155.
\bibitem{FraserSchoen-2011} Fraser, A.; Schoen, R., {\sl
The first Steklov eigenvalue, conformal geometry, and minimal surfaces,} Adv. Math. \textbf{226} (2011), no. 5, 4011--4030.

 \bibitem{FraserSchoen-2012} Fraser, A.; Schoen, R., {\sl Sharp eigenvalue bounds and minimal surfaces in the ball}, preprint, arXiv:1209.3789
 \bibitem{FraserSchoen-2013} Fraser, A.; Schoen, R., {\sl Minimal surfaces and eigenvalue problems}, preprint, arXiv:1304.0851.
 \bibitem{GP2010}Girouard, A.; Polterovich, I. {\sl On the Hersch-Payne-Schiffer estimates for the eigenvalues of the Steklov problem.} (Russian) Funktsional. Anal. i Prilozhen. 44 (2010), no. 2, 33--47; translation in Funct. Anal. Appl. 44 (2010), no. 2, 10--117.
 \bibitem{LU2001}Lassas, Matti; Uhlmann, Gunther. {\sl On determining a Riemannian manifold from the Dirichlet-to-Neumann map.} Ann. Sci. ¨¦cole Norm. Sup. (4) 34 (2001), no. 5, 771--787.
 \bibitem{LTU2003}Lassas, Matti; Taylor, Michael; Uhlmann, Gunther. {\sl The Dirichlet-to-Neumann map for complete Riemannian manifolds with boundary.} Comm. Anal. Geom. 11 (2003), no. 2, 207--221.
 \bibitem{LY1982} Li, Peter; Yau, Shing Tung. {\sl A new conformal invariant and its applications to the Willmore conjecture and the first eigenvalue of compact surfaces.} Invent. Math. 69 (1982), no. 2, 269--291.
\bibitem{Steklov}Stekloff, W.; {\sl Sur les probl\`emes fondamentaux de la physique math\'e matique.} (French) Ann. Sci. \'Ecole Norm. Sup. (3) 19 (1902), 191--259.
\bibitem{W}Weinstock, Robert. {\sl Inequalities for a classical eigenvalue problem.} J. Rational Mech. Anal. 3, (1954). 745¨C753.


\end{thebibliography}
\end{document}